		\titleformat{\subsection}{\it}{\thesubsection.\enspace}{1pt}{}
		\newtheorem{theo}{Theorem}[section]
		\newtheorem{lemm}[theo]{Lemma}
		\newtheorem{prop}[theo]{Proposition}
		\newtheorem{rema}[theo]{Remark}
		\numberwithin{equation}{section}
		\newcommand\lm{{\lesssim}}
\begin{document}
			\title{Large time behavior to a 2D micro-macro model for compressible polymeric fluids near equilibrium
				\hspace{-4mm}
			}
			
			\author{ Wenjie $\mbox{Deng}^1$ \footnote{email: detective2028@qq.com},\quad
				Wei $\mbox{Luo}^1$\footnote{E-mail:  luowei23@mail2.sysu.edu.cn} \quad and\quad
				Zhaoyang $\mbox{Yin}^{1,2}$\footnote{E-mail: mcsyzy@mail.sysu.edu.cn}\\
				$^1\mbox{Department}$ of Mathematics,
				Sun Yat-sen University, Guangzhou 510275, China\\
				$^2\mbox{Faculty}$ of Information Technology,\\ Macau University of Science and Technology, Macau, China}
			
			\date{}
			\maketitle
			\hrule
			
			\begin{abstract}
				In this paper, we mainly study the large time behavior to a 2D micro-macro model for compressible polymeric fluids with small initial data. This model is a coupling of isentropic compressible Navier-Stokes equations with a nonlinear Fokker-Planck equation. Firstly the Fourier splitting method yields that the logarithmic decay rate. By virtue
				of the time weighted energy estimate, we can improve the decay rate to $(1 + t)^{-\frac{1}{4}}$. Under the
				low-frequency condition and by the Littlewood-Paley theory, we show that the solutions belong to
				some Besov spaces with negative index and obtain the optimal $L^2$ decay rate. Finally, we obtain the $\dot{H}^s$ decay rate by establishing a new Fourier splitting estimate.\\
				\vspace*{5pt}
				\noindent {\it 2010 Mathematics Subject Classification}: 35Q30, 76B03, 76D05, 76D99.
				
				\vspace*{5pt}
				\noindent{\it Keywords}: The compressible polymeric fluids; global strong solutions; time decay rate.
			\end{abstract}
			
			\vspace*{10pt}
			
			\tableofcontents
			
			\section{Introduction}
	   In this paper, we consider a micro-macro model for compressible polymeric fluids near equilibrium with dimension $d\geq2$ \cite{2017Global} :
	\begin{align}\label{eq0}
		\left\{
		\begin{array}{ll}
			\varrho_t+{\rm div}(\varrho u)=0 , \\[1ex]
			(\varrho u)_t+{\rm div}(\varrho u\otimes u)-\rm div\Sigma{(u)}+\frac 1 {Ma^2} \nabla{P(\varrho)}=\frac 1 {De} \frac {\kappa} {r} \rm div~\tau, \\[1ex]
			\psi_t+u\cdot\nabla\psi={\rm div}_{q}[- \nabla u \cdot{q}\psi+\frac {\sigma} {De}\nabla_{q}\psi+\frac {1} {De\cdot r}\nabla_{q}\mathcal{U}\psi],  \\[1ex]
			\tau_{ij}=\int_{\mathbb{R}^{d}}(q_{i}\nabla_{q_j}\mathcal{U})\psi dq, \\[1ex]
			\varrho|_{t=0}=\varrho_0,~~u|_{t=0}=u_0,~~\psi|_{t=0}=\psi_0, \\[1ex]
		\end{array}
		\right.
	\end{align}
	where $\varrho(t,x)$ is the density of the solvent, $u(t,x)$ stands for the velocity of the polymeric liquid and $\psi(t,x,q)$ denotes the distribution function for the internal configuration. Here the polymer elongation $q\in\mathbb{R}^{d}$ , $x\in\mathbb{R}^{d}$ and $t\in[0,\infty)$.
	The notation 
	$$\Sigma{(u)}=\mu\left(\nabla u+\nabla^{T} u\right)+\mu'{\rm div}u\cdot Id$$
	stands for the stress tensor, with $\mu$ and $\mu'$ being the viscosity coefficients satisfying the relation $\mu>0$ and $2\mu+\mu'>0$.
	The pressure obeys the so-called $\gamma$-law: $$P(\varrho)=a\varrho^\gamma~~~~~~~~$$ 
	with $\gamma\geq1, a>0$. $\sigma$ is a constant satisfing the relation $\sigma=k_BT_a$, where $k_B$ is the Boltzmann constant and $T_a$ is the absolute temperature.  Furthermore, $r>0$ is related to
	the linear damping mechanism in dynamics of the microscopic variable $q$, and $\kappa > 0$ is some parameter describing the ratio between kinetic and elastic energy. The parameter $De$ denotes the Deborah number, which represents the ratio of the time scales for elastic stress relaxation, so it characterizes the fluidity of the system. The Mach number $Ma$ describes the ratio between the fluid velocity and the sound speed, which measures the compressibility of the system.
	Moreover, the potential $\mathcal{U}(q)$ follows the same assumptions as that of \cite{2017Global} :
	\begin{align}\label{potential1}
		\left\{
		\begin{array}{ll}
			|q|\lesssim(1+|\nabla_q\mathcal{U}|),\\
			\Delta_q\mathcal{U}\leq C + \delta|\nabla_q\mathcal{U}|^2,\\
			\int_{\mathbb{R}^{d}}|\nabla_q\mathcal{U}|^2\psi_{\infty}dq\leq C,~~\int_{\mathbb{R}^{d}}|q|^4\psi_{\infty}dq\leq C,
		\end{array}
		\right.
	\end{align}
	with $\delta\in(0,1)$ and
	\begin{align}\label{potential2}
		\left\{
		\begin{array}{ll}
			|\nabla^k_q(q\nabla_q\mathcal{U})|\lesssim(1+|q||\nabla_q\mathcal{U}|),\\
			\int_{\mathbb{R}^d}|\nabla^k_q(q\nabla_q\mathcal{U}\sqrt{\psi_{\infty}})|^2dq\leq C,\\
			|\nabla^k_q(\Delta_q\mathcal{U}-\frac{1}{2}|\nabla_q\mathcal{U}|^2)|\lesssim(1+|\nabla_q\mathcal{U}|^2),~~~~~
		\end{array}
		\right.
	\end{align}
	with the integer $k\in[1,3]$.  The most simplest case among them is the Hookean spring $\mathcal{U}(q)=\frac{1}{2}|q|^2$. From the Fokker-Planck equation, one can derive the following compressible Oldroyd-B equation, which has been studied in depth in \cite{incomprelimDFang,incomprelimZLei,compre1,compre2,2018Global} :
	\begin{align}\label{tauequ}
		\tau_t + u\cdot\nabla\tau + 2\tau  + Q(\nabla u,\tau) = Du~,~~~~~~(Du)^{ij}=\mathop{\sum}\limits_{l,m=1}^d\nabla^lu^m\int_{\mathbb{R}^d}q^lq^mq^iq^j e^{-|q|^2}dq~.
	\end{align}
	It is universally  known that the system \eqref{eq0} can be used to described the fluids coupling polymers. The system is of great interest in many branches of physics, chemistry, and biology, see \cite{Bird1977,Doi1988}. In this model, a polymer is idealized as an ``elastic dumbbell" consisting of two ``beads" joined by a spring that can be modeled by a vector $q$. The polymer particles are described by a probability function $\psi(t,x,q)$ satisfying that $\int_{\mathbb{R}^{d}}
	\psi(t,x,q)dq =1$, which represents the distribution of particles' elongation vector $q\in \mathbb{R}^{d}$. 
	In comparision to the macro-micro models studied in \cite{2017Global}, it's reasonable for us to remove the term ${\rm div} u \psi$ from the equation $\psi$ obey. Otherwise, one can derive ${\rm div} u =0$ from the equation $\psi$ obey by assumption $\int_{\mathbb{R}^d}\psi dq =\int_{\mathbb{R}^d}\psi_0 dq$. However, the condition $\int_{\mathbb{R}^d}\psi dq =\int_{\mathbb{R}^d}\psi_0 dq$ is of great significance in the estimation of ${\rm div}_q \left(\nabla uq\psi\right)$ and it seems impossible to obtain global priori estimation without any dissipation or conservation law for $\int_{\mathbb{R}^d}\psi dq$. Therefore, we underlined that the system \eqref{eq0} is meaningful and the results in this paper indeed cover those obtained in \cite{2017Global,2018Global} . At the level of liquid, the system couples the Navier-Stokes equation for the fluid velocity with a Fokker-Planck equation describing the evolution of the polymer density. One can refer to  \cite{Bird1977,Doi1988,Masmoudi2008,Masmoudi2013} for more details.

	%
	%
	%


		In this paper we will take $a,~\sigma,~\kappa,~r,~De,~Ma$ equal to $1$.
		It is easy to check that $(1,0,\psi_{\infty})$ with $$\psi_{\infty}(q)=\frac{e^{-\mathcal{U}(q)}}{\int_{\mathbb{R}^{d}}e^{-\mathcal{U}(q)}dq}~,~~~~~~~~$$
		is  a stationary solution to the system \eqref{eq0}. Considering the perturbations near the global equilibrium:
		\begin{align*}
			\rho=\varrho-1,~~u=u~~~\text{and}~~~g=\frac {\psi-\psi_\infty} {\psi_\infty}~,~~~~~
		\end{align*}
		then we can rewrite system \eqref{eq0} as follows :
		\begin{align}\label{eq1}
			~~~~~~~~~~~~~~~~~~
			\left\{
			\begin{array}{ll}
				\rho_t+\rm divu(1+\rho)=-u\cdot\nabla\rho , \\[1ex]
				u_t-\frac 1 {1+\rho} \rm div\Sigma{(u)}+\frac {P'(1+\rho)} {1+\rho} \nabla\rho=-u\cdot\nabla u+\frac 1 {1+\rho} \rm div\tau, \\[1ex]
				g_t+\mathcal{L}g=-u\cdot\nabla g-\frac 1 {\psi_\infty}\nabla_q\cdot(\nabla uqg\psi_\infty)-\rm divu-\nabla u q\nabla_{q}\mathcal{U},  \\[1ex]
				\tau_{ij}(g)=\int_{\mathbb{R}^{d}}(q_{i}\nabla_{q_j}\mathcal{U})g\psi_\infty dq, \\[1ex]
				\rho|_{t=0}=\rho_0,~~u|_{t=0}=u_0,~~g|_{t=0}=g_0, \\[1ex]
			\end{array}
			\right.
		\end{align}
		where $\mathcal{L}g=-\frac 1 {\psi_\infty}\nabla_q\cdot(\psi_\infty\nabla_{q}g)$ .
				
				\subsection{Short reviews for the incompressible polymeric fluid models}
				So far, there are lots of results about the incompressible model that had been constructed extensively. M. Renardy \cite{Renardy} established the local well-posedness in Sobolev spaces with potential $\mathcal{U}(q)=(1-|q|^2)^{1-\sigma}$ for $\sigma>1$. Later, B. Jourdain et al. \cite{Jourdain} proved local existence of a stochastic differential equation with potential $\mathcal{U}(q)=-k\log(1-|q|^{2})$ in the case $k>3$ for a Couette flow. P. L. Lions and N. Masmoudi \cite{Lions-Masmoudi} constructed global weak solution for some Oldroyd models in the co-rotation case. In order to give a sufficient condition of non-breakdown for an incompressible viscoelastic fluid of the Oldroyd type, J. Y. Chemin and N. Masmoudi \cite{lifespan} established a new priori estimate for 2D Navier-Stokes system and derived a losing derivative estimate for the transport equation, which is of significance in the proof of the global well-posedness for viscoelastic fluids. Later, N. Masmoudi et al. \cite{Masmoudi2dfluid} obtained global solutions for 2D polymeric fluid models under the co-rotational assumption without any small conditions. In addition, Z. Lei et al. \cite{Z.Leinewmethod} provided a new method to improve the criterion for viscoelastic systems of Oldroyd type considered by \cite{lifespan}. It's noting that this method is much easier and is expected to be adopted to other problems involving the prior estimate of losing derivative.
				
				L. He and P. Zhang \cite{He2009} studied the long time decay of the $L^2$ norm to the incompressible Hooke dumbbell models and found that the solutions tends to the equilibrium by $(1+t)^{-\frac{3}{4}}$ under the low-frequency assumption that $u_0\in L^1(\mathbb{R}^3)$. Recently, M. Schonbek \cite{Schonbekdecay} studied the $L^2$ decay of the velocity $u$ to the co-rotation FENE dumbbell model and proved that velocity $u$ tends to zero in $L^2$ by  $(1+t)^{-\frac{d}{4}+\frac{1}{2}}$, $d\geq 2$ when the initial perturbation is additionally bounded in $L^1\mathbb{R}^d$. More recently, W. Luo and Z. Yin \cite{Luo-Yin,Luo-Yin2} improved Schonbek's result and showed that the optimal decay rate of velocity $u$ in $L^2$ should be $(1+t)^{-\frac{d}{4}}$.
				\subsection{Short reviews for the compressible polymeric fluid models}
				Z. Lei \cite{incomprelimZLei} first investigated the incompressible limit problem of the compressible Oldroyd-B model in
				torus. Recently, D. Fang and R. Zi \cite{incomprelimDFang} studied the global well-posedness for compressible Oldroyd-B
				model in critical Besov spaces with $d\geq2$. Z. Zhou et al. \cite{2018Global} proved the global well-posedness and time decay rates for the 3D compressible Oldroyd-B model. More details for the compressible Oldroyd-B type model based on the deformation tensor can refer to \cite{compre1,compre2}. Recently, N. Jiang et al. \cite{2017Global} employed the energetic variational method to derive a micro-macro model for compressible polymeric fluids and proved the global existence near the equilibrium in the Sobolev space $H^3(\mathbb{R}^3)$. W. Deng et al. \cite{decay2022} established the global well-posedness for a micro-macro model for compressible polymeric fluids near equilibrium in Sobolev spaces with $d\geq2$ and obtained the optimal time decay rates in $L^2(\mathbb{R}^d)$ and $\dot{H}^s(\mathbb{R}^d)$ with $d\geq3$ when the initial perturbation is additionally bounded in $\dot{B}^{-\frac{d}{2}}_{2,\infty}(\mathbb{R}^d)$. Z. Luo et al. \cite{FENEdecay} studied the global strong solution for the compressible FENE models near equilibrium with $d\geq2$ and obtained the optimal decay rate in $L^2(\mathbb{R}^d)$ under the low-frequency assumption for initial data.
				\subsection{Main results}
				
				The long time behavior for polymeric models is noticed by N. Masmoudi \cite{2016Masmoudi}. To our best knowledge, large time behaviour for the $2D$ compressible polymeric fluid model with general Hookean potentials given by $(\ref{potential1})$, $(\ref{potential2})$ have not been studied yet. In this paper, we are devoted to the study of large time behaviour optimal time decay rate of $(\ref{eq1})$ in $L^2(\mathbb{R}^2)$ and $\dot{H}^{s}(\mathbb{R}^2)$. The brief outline of the proof is carrying out in the following.
				
				Firstly we can prove the logarithmic decay rate for the
				velocity. The main difficulty for us is to get the initial algebraic decay rate for $u$. By virtue of the time weighted energy estimate and the logarithmic decay rate obtained, then we improve the
				initial decay rate to $(1+t)^{-\frac{1}{4}}$ for the velocity in $L^2(\mathbb{R}^2)$. Different from incompressible cases, we can not obtain the estimate of $\|u\|_{L^1}$, which forces us to obtain half decay rate merely at the very beginning. We will use three steps to overcome the difficulty. \\
				{\bf Step 1}. Estimate the $\dot{B}^{-\frac{1}{2}}_{2,\infty}(\mathbb{R}^2)$-norm of the velocity as in \cite{FENEdecay}. \\
				{\bf Step 2}. By the Littlewood-Paley decomposition theory and the Fourier splitting method, we can improve the time decay rate:
				$$\|u\|_{L^2}\leq (1+t)^{-\frac{5}{16}}.$$
				{\bf Step 3}. Estimate the $\dot{B}^{-1}_{2,\infty}(\mathbb{R}^2)$-norm of the velocity, which eventually leads the optimal time decay rate for the velocity by iteration.
				
				It is worthy mentioning that the optimal decay rates of $(\rho,u,g)$ in $\dot H^s(\mathbb{R}^2)$ is absolutely innovative. It fails to obtain the decay rate of $(\rho,u)$ in $\dot{H}^s(\mathbb{R}^2)$ by the same way as that of $L^2(\mathbb{R}^2)$ since
				$$\|\Lambda^s(\rho,u)\|^2_{L^2} + \eta\langle\Lambda^{s-1}u,\Lambda^s\rho\rangle \simeq \|\Lambda^s(\rho,u)\|^2_{L^2}$$
				does not hold anymore for any $\eta>0$. To overcome the difficulty, a critical Fourier splitting estimate is established in Lemma \ref{2lem2}, which implies that the dissipation of $\rho$ only in high frequency fully enables us to obtain optimal decay rate in $\dot{H}^s(\mathbb{R}^2)$. It should be pointed out that the frequency decomposition considered in this paper is in time dependency, which conduces to obtain optimal decay rate without finite induction argument. Moreover, we can obtain the $\frac{1}{2}$ faster algebric decay rate for $g$.
				
				Let's recall the following theorem.
				\begin{theo}\cite{decay2022}\label{th1}
					Let $d \geq 2~and~s>1+\frac{d}{2}$. Assume $(\rho,u,g)$ be a classical solution of \eqref{eq1} with the initial data $(\rho_0,u_0,g_0)$ satisfying the conditions $\int_{\mathbb{R}^d} g_{0}\psi_{\infty}dq=0$ and $1+g_0>0$, then there exists some sufficiently small constant $\varepsilon_0$ such that if
					\begin{align}
						E_{\lambda}(0)=\|\rho_0\|^2_{H^s}+\|u_0\|^2_{H^s}+\|g_0\|^2_{H^s(\mathcal{L}^2)}+\lambda\|\langle q \rangle g_0\|^2_{H^{s-1}(\mathcal{L}^{2})} \leq \varepsilon_0,
					\end{align}
					then the compressible system \eqref{eq1} admits a unique global classical solution $(\rho,u,g)$ with $\int_{\mathbb{R}^d}g\psi_{\infty}dq=0$ and $1+g>0$ and
					\begin{align}\label{ineq0}
						\sup_{t\in[0,+\infty)} E_{\lambda}(t)+\int_{0}^{\infty}D_{\lambda}(t)dt\leq \varepsilon,
					\end{align}
					where $\varepsilon$ is a small constant dependent on the viscosity coefficients.
				\end{theo}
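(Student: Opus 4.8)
\emph{Proof strategy.} This is a small-data global well-posedness statement carrying a uniform energy--dissipation bound, so the natural route is local existence plus a continuation argument driven by an a priori estimate for $E_\lambda(t)+\int_0^t D_\lambda$. First I would establish local existence and uniqueness of a classical solution on a maximal interval $[0,T^\ast)$, e.g.\ by a Friedrichs regularization or a contraction scheme in $C([0,T];H^s)$ for $(\rho,u)$ and in the weighted space $H^s(\mathcal{L}^2)$ for $g$, using that $s>1+\frac d2$ makes $H^{s-1}$ an algebra. One must check that the three structural conditions propagate: $1+\rho>0$ and $1+g>0$ from the transport structure, and $\int_{\mathbb{R}^d}g\psi_\infty\,dq\equiv 0$ by integrating the $g$-equation in $q$ against $\psi_\infty$, all terms on the right having vanishing $\psi_\infty$-average (the $q$-divergences are exact, and the $\psi_\infty$-averages of $\mathrm{div}\,u$ and $\nabla u\,q\,\nabla_q\mathcal{U}$ cancel via $\int q_i\partial_{q_j}\mathcal{U}\,\psi_\infty\,dq=\delta_{ij}$). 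Global existence then reduces to showing that on any $[0,T]\subset[0,T^\ast)$ on which $\sup E_\lambda$ stays below a fixed small constant, one in fact has $\sup_{[0,T]}E_\lambda+\int_0^T D_\lambda\le C E_\lambda(0)+C(\sup E_\lambda)^{1/2}\int_0^T D_\lambda$, which closes by absorption.

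For the a priori estimate I would run the top-order energy estimate: apply $\Lambda^s$ to the equations of \eqref{eq1}, pair with $\Lambda^s\rho$ and $\Lambda^su$ in $L^2_x$ and with $\Lambda^sg$ in $L^2_x(\mathcal{L}^2_q)$, and sum. The viscosity yields $\mu\|\Lambda^s\nabla u\|_{L^2}^2+(\mu+\mu')\|\Lambda^s\mathrm{div}\,u\|_{L^2}^2$; the Fokker--Planck operator yields $\|\Lambda^s\nabla_qg\|_{\mathcal{L}^2}^2$, which the spectral-gap inequality for $\psi_\infty\,dq$ together with $\int\Lambda^sg\,\psi_\infty\,dq=0$ upgrades to the full $\|\Lambda^sg\|_{\mathcal{L}^2}^2$. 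The dangerous linear couplings cancel: the pair $\langle\Lambda^s\tfrac{1}{1+\rho}\mathrm{div}\,\tau(g),\Lambda^su\rangle$ and $\langle\Lambda^s(\mathrm{div}\,u+\nabla u\,q\,\nabla_q\mathcal{U}),\Lambda^sg\rangle_{\mathcal{L}^2}$ cancel modulo commutators because $\tau(g)=\int q\otimes\nabla_q\mathcal{U}\,g\psi_\infty\,dq$, while $\langle\Lambda^s\mathrm{div}\,u,\Lambda^sg\rangle_{\mathcal{L}^2}=0$ by mean-zero in $q$, and the pressure/continuity pair cancels to leading order. Every remaining term is at least quadratic and is bounded by $(\sup E_\lambda)^{1/2}D_\lambda$ via Moser and commutator estimates such as $\|[\Lambda^s,f]g\|_{L^2}\lesssim\|\nabla f\|_{L^\infty}\|\Lambda^{s-1}g\|_{L^2}+\|\Lambda^sf\|_{L^2}\|g\|_{L^\infty}$, composition estimates for $\tfrac{1}{1+\rho}$ and $\tfrac{P'(1+\rho)}{1+\rho}$, and Sobolev embedding.

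Two further ingredients are needed. Since $\rho$ carries no diffusion, I would add to the energy a Matsumura--Nishida interaction functional of the form $\eta\langle\Lambda^{s-1}u,\nabla\Lambda^{s-1}\rho\rangle$ (summed over the orders at play); differentiating in time and substituting $\partial_t\rho$ and $\partial_tu$ from \eqref{eq1} produces the missing dissipation $c\eta\|\nabla\rho\|_{H^{s-1}}^2$ at the price of a controllable $C\eta\|\nabla u\|_{H^s}^2$ plus quadratic terms, and for $\eta$ small this functional is equivalent to the $(\rho,u)$-part of $E_\lambda$ thanks to the lower-order $L^2$ piece. To control the $q$-weighted part $\lambda\|\langle q\rangle g\|_{H^{s-1}(\mathcal{L}^2)}^2$ of $E_\lambda$ --- forced on us precisely by the term $\tfrac{1}{\psi_\infty}\nabla_q\cdot(\nabla u\,q\,g\,\psi_\infty)$ --- I would test the equation satisfied by $q_kg$: the commutator $[q_k,\mathcal{L}]$ is of lower order, the transport and pressure-free structure is preserved, the assumptions \eqref{potential1}--\eqref{potential2} (especially $\int|\nabla_q^k(q\nabla_q\mathcal{U}\sqrt{\psi_\infty})|^2\,dq\le C$) bound the potential terms, and the remaining genuinely quadratic contributions are absorbed into $D_\lambda$; taking $\lambda$ small makes the new coupling terms dominated by dissipation already in hand. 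Assembling everything yields the Lyapunov inequality $\frac{d}{dt}E_\lambda+D_\lambda\le C(\sup E_\lambda)^{1/2}D_\lambda$, hence \eqref{ineq0} and, by continuation, the global solution.

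The main obstacle, I expect, is the $q$-direction: the couplings $\nabla_q\cdot(\nabla u\,q\,g\,\psi_\infty)$ and $\nabla u\,q\,\nabla_q\mathcal{U}$ must be tamed with only the $x$-degenerate Fokker--Planck dissipation, which is exactly what necessitates the $\langle q\rangle$-weighted energy and a careful exploitation of \eqref{potential1}--\eqref{potential2}; the absence of density dissipation is the second, but more routine, difficulty, handled by the interaction functional. One should also keep in mind that \eqref{eq1} differs from the model of \cite{2017Global} (the $\mathrm{div}\,u\,\psi$ term has been dropped), so the cancellations above have to be verified directly for the present system rather than imported.
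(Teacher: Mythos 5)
The paper imports Theorem \ref{th1} from \cite{decay2022} without reproducing its proof, but the functionals $E_\lambda$ and $D_\lambda$ displayed in Section 2 (including the $\gamma\|\nabla\rho\|_{H^{s-1}}^2$ dissipation and the $\lambda\|\langle q\rangle\cdot\|_{H^{s-1}(\mathcal{L}^2)}^2$ block) confirm that the underlying argument runs exactly along the lines you outline: local theory plus continuation, top-order energy estimate with the $\mathrm{div}\,\tau$ vs.\ $\nabla u\,q\,\nabla_q\mathcal{U}$ cancellation and mean-zero propagation, a Matsumura--Nishida interaction functional to generate density dissipation, and a $\langle q\rangle$-weighted companion estimate at order $s-1$ forced by $\nabla_q\cdot(\nabla u\,q\,g\,\psi_\infty)$ and closed via \eqref{potential1}--\eqref{potential2}. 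Your reconstruction is essentially the intended proof.
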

				Our main result can be stated as follows.
				\begin{theo}\label{th2}
					Set $d=2$. Let $(\rho,u,g)$ be a global strong solution of \eqref{eq1} with the initial data $(\rho_0,u_0,g_0)$ under the condition in Theorem \ref{th1}. In addition, if $(\rho_0,u_0,\int_{R^2} q\otimes\nabla_q\mathcal{U}g_0 dq)\in \dot{B}^{-1}_{2,\infty}(\mathbb{R}^2)\times \dot{B}^{-1}_{2,\infty}(\mathbb{R}^2)\times \dot{B}^{-1}_{2,\infty}(\mathbb{R}^2)$, then there exists a constant $C$ such that
					\begin{align}\label{decay}
						\left\{\begin{array}{l}
							\|\Lambda^\sigma(\rho,u)\|_{L^2}\leq C(1+t)^{-\frac 1 2 - \frac \sigma 2},~\sigma\in[0,s],\\
							\|\Lambda^\delta g\|_{L^2(\mathcal{L}^2)}\leq C(1+t)^{-\frac{1}{2}-\frac{1}{2} - \frac \delta 2},~\delta\in[0,s-1],\\
							\|\Lambda^\eta g\|_{L^2(\mathcal{L}^2)}\leq C(1+t)^{-\frac{1}{2} - \frac {s} {2} },~~\eta\in(s-1,s],
						\end{array}\right.
					\end{align}
				\end{theo}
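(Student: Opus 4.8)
\emph{Proof strategy.} The plan is to combine the Fourier splitting method with Littlewood--Paley analysis and a bootstrap on negative-index Besov regularity, always working over the global solution supplied by Theorem \ref{th1}. Recall from \eqref{ineq0} that the dissipation $D_\lambda$ controls $\|\nabla u\|_{H^s}^2$, $\|\nabla\rho\|_{H^{s-1}}^2$, and, through the spectral gap $\langle\mathcal{L}g,g\rangle_{\mathcal{L}^2}\gtrsim\|g\|_{\mathcal{L}^2}^2$ (valid because $\int g\psi_\infty\,dq=0$), the full quantity $\|g\|_{H^s(\mathcal{L}^2)}^2$ together with the $\lambda$-weighted piece $\|\langle q\rangle g\|_{H^{s-1}(\mathcal{L}^2)}^2$. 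Two structural obstructions must be navigated: there is \emph{no} damping on $\rho$ itself (only on $\nabla\rho$, recovered through a small Hodge-type cross term $\eta\langle u,\nabla\rho\rangle$), and there is \emph{no} $L^1$ bound for $u$, so low frequencies cannot be controlled for free as in incompressible models.

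\emph{Step 1: $L^2$ decay of $(\rho,u)$.} Set $a(t)=\|(\rho,u)\|_{L^2}^2+\|g\|_{L^2(\mathcal{L}^2)}^2+\eta\langle u,\nabla\rho\rangle$. The basic energy identity combined with the Schonbek cutoff $S(t)=\{\xi:|\xi|^2\le K/(1+t)\}$ yields $\frac{d}{dt}a+\frac{c}{1+t}a\lesssim\frac{1}{1+t}\int_{S(t)}|\widehat{(\rho,u)}(\xi)|^2\,d\xi$; treating the right-hand side crudely first gives a logarithmic decay rate for $u$, and feeding that into a time-weighted estimate $\frac{d}{dt}\big((1+t)^{\alpha}a\big)$ upgrades it to $\|u\|_{L^2}\le C(1+t)^{-1/4}$. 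Then one runs the three-step loop announced in the introduction. Since the data satisfy $\dot{B}^{-1}_{2,\infty}\cap L^2\hookrightarrow\dot{B}^{-1/2}_{2,\infty}$, applying $\dot\Delta_j$ to \eqref{eq1} and using Duhamel together with the decay already available to make the quadratic terms and the $\mathrm{div}\,\tau$ term summable propagates $\|(\rho,u)(t)\|_{\dot{B}^{-1/2}_{2,\infty}}\le C$; re-inserting the resulting bound $\int_{S(t)}|\widehat{(\rho,u)}|^2\lesssim(1+t)^{-1/2}$ into the Fourier splitting inequality improves the rate to $\|u\|_{L^2}\le C(1+t)^{-5/16}$; with this faster rate the norm $\|(\rho,u)(t)\|_{\dot{B}^{-1}_{2,\infty}}$ is propagated uniformly (here the hypothesis on $\int q\otimes\nabla_q\mathcal{U}g_0\,dq$ controls the low-frequency contribution of $\mathrm{div}\,\tau$, $\tau$ itself obeying the damped transport equation \eqref{tauequ}); a final Fourier splitting iteration with $\int_{S(t)}|\widehat{(\rho,u)}|^2\lesssim(1+t)^{-2}$ then gives the optimal $\|(\rho,u)\|_{L^2}\le C(1+t)^{-1/2}$.

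\emph{Step 2: higher-order decay of $(\rho,u)$ and decay of $g$.} For $0<\sigma<s$ the rate $\|\Lambda^\sigma(\rho,u)\|_{L^2}\le C(1+t)^{-1/2-\sigma/2}$ follows from a time-weighted energy estimate at differentiation order $\sigma$ with the zeroth-order rate as input. The top order $\sigma=s$ is the delicate point flagged in the introduction: $\|\Lambda^s(\rho,u)\|_{L^2}^2+\eta\langle\Lambda^{s-1}u,\Lambda^s\rho\rangle$ is no longer equivalent to $\|\Lambda^s(\rho,u)\|_{L^2}^2$, so the usual energy method fails. This is exactly what the critical Fourier splitting estimate of Lemma \ref{2lem2} supplies: with a \emph{time-dependent} cutoff one splits $\Lambda^s\rho$ into a low-frequency part, bounded by $R(t)^{s-\sigma'}\|\Lambda^{\sigma'}(\rho,u)\|_{L^2}$ for some $\sigma'<s$ with its known decay, and a high-frequency part, absorbed by the dissipation $\|\Lambda^{s-1}\nabla\rho\|_{L^2}^2$; the time dependence of $R(t)$ is what closes the argument without a finite induction. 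Finally, for $g$ one uses that the full damping from the spectral gap of $\mathcal{L}$ makes $g$ decay at the rate of its forcing $-\mathrm{div}\,u-\nabla u\,q\nabla_q\mathcal{U}$ (plus quadratic terms): a time-weighted estimate for $\Lambda^\delta g$ with forcing of order $\Lambda^{\delta+1}u$ (controlled by $\sqrt{D_\lambda}$ when $\delta+1\le s$) yields $\|\Lambda^\delta g\|_{L^2(\mathcal{L}^2)}\le C(1+t)^{-1-\delta/2}$, one half power faster than $(\rho,u)$; in the remaining range $\eta\in(s-1,s]$ the forcing regularity $\Lambda^{\eta+1}u$ is unavailable, but the $u$--$g$ cancellation structure removes the derivative loss in the combined top-order energy, so $\Lambda^\eta g$ inherits the top-order rate of $(\rho,u)$, namely $C(1+t)^{-1/2-s/2}$.

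\emph{Main obstacle.} The crux is the bootstrap of the negative-index Besov regularity in Step 1: because $\|u\|_{L^1}$ is unavailable one cannot land in $\dot{B}^{-1}_{2,\infty}$ at once, and each upgrade of the Besov index requires a strictly faster algebraic decay rate to make the nonlinearity and the $\mathrm{div}\,\tau$ term time-integrable, so the chain $(1+t)^{-1/4}\to(1+t)^{-5/16}\to(1+t)^{-1/2}$ and the Besov indices $-1/2\to-1$ have to be improved alternately and kept consistent. The second, more self-contained difficulty is establishing Lemma \ref{2lem2}, which ensures that the missing low-frequency dissipation of $\rho$ at top order does no harm and thereby delivers the $\dot{H}^s$ rates.
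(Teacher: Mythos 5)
Your proposal matches the paper's strategy faithfully: Fourier splitting yields first a logarithmic rate, then $(1+t)^{-1/4}$; the negative-index Besov bootstrap (Lemmas \ref{1lem2}--\ref{1lem3}) with the rate/index chain $(1+t)^{-1/4}\to(1+t)^{-5/16}\to(1+t)^{-1/2}$ and $\dot B^{-1/2}_{2,\infty}\to\dot B^{-1}_{2,\infty}$ gives the optimal $L^2$ rate; the time-dependent cutoff and high-frequency cross-term dissipation of Lemma \ref{2lem2} handle the top order; Duhamel on the Fokker--Planck equation gives the extra half power for $g$; and interpolation fills in intermediate orders, exactly as in the paper. One bookkeeping caveat: from $(\rho,u)\in\dot B^{-1}_{2,\infty}$ the low-frequency integral $\int_{S(t)}|\widehat{(\rho,u)}|^2\,d\xi$ is $\lesssim(1+t)^{-1}$, not $(1+t)^{-2}$ (the extra $(1+t)^{-1}$ in the paper's estimate is the Schonbek weight $\frac{1}{1+t}$ in front of it), but this does not alter the conclusion $\|(\rho,u)\|_{L^2}\lesssim(1+t)^{-1/2}$.
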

				\begin{rema}
				 One can see that the system \eqref{eq0} includes the Oldroyd-B type equations by taking the classical Hookean spring $\mathcal{U}(q)=\frac{1}{2}|q|^2$. Therefore, Theorems \ref{th2} implies the optimal decay rate for the classical Oldroyd-B model. It is noting that we obtain the $\frac{1}{2}$ faster algebraic decay rate for $\tau$ or $\|g\|_{\mathcal{L}^2}$ in $\dot{H}^\eta$ for any $\eta\in[s-2,s]$ than those the Oldroyd-B type equations can attain originally. To the best of our knowledge, Theorem \ref{th2} is the first result for the highest order derivative
					decay of solutions to the $2D$ micro-macro model for compressible polymeric fluids.
				\end{rema}
				\begin{rema}
					When we derive the dissipation of $\rho$ in high frequency, $\|\Lambda^{s}u^{high}\|^2_{L^2}$ arises from linear term ${\rm div}~u$, which may lead to the loss of time decay rate. In order to overcome the difficulty, we consider the time weight $(1+t)^{-1}$ such that it can be controlled by the dissipation of $u$, i.e. $$(1+t)^{-1}\|\Lambda^{s}u^{high}\|^2_{L^2}\lesssim\|\Lambda^{s+1}u\|^2_{L^2}~.$$
					We point out that the time weight we consider is critial indeed in our proof since it would be failed if we choose the time weight as $(1+t)^{-\alpha}$ for any $\alpha\in(0,1)$. More details refer to Lemma \ref{2lem2}~.
				\end{rema}
				
				The paper is organized as follows. In Section 2 we introduce some notations and give some preliminaries which will be used in the sequel. In Section 3 we study the $L^2$ decay of solutions to a micro-macro model for compressible polymeric fluids near equilibrium with general Hookean potentials by using the Fourier splitting method. In Section 4 we study the $\dot{H}^s$ decay by establishing a new critical Fourier splitting estimate.
				
				\section{Preliminaries}
			In this section we will introduce some notations and useful lemmas which will be used in the sequel. If the function spaces are over $\mathbb{R}^d$ for the variable $x$ and $q$, for simplicity, we drop $\mathbb{R}^d$ in the notation of function spaces if there is no ambiguity.
			
			For $p\geq1$, we denote by $\mathcal{L}^{p}$ the space
			$$\mathcal{L}^{p}=\big\{f \big|\|f\|^{p}_{\mathcal{L}^{p}}=\int_{\mathbb{R}^d} \psi_{\infty}|f|^{p}dq<\infty\big\}.~~~~~~~~~~~~~~~~$$
			
			We will use the notation $L^{p}_{x}(\mathcal{L}^{q})$ to denote $L^{p}[\mathbb{R}^{d};\mathcal{L}^{q}]:$
			$$L^{p}_{x}(\mathcal{L}^{q})=\big\{f \big|\|f\|_{L^{p}_{x}(\mathcal{L}^{q})}=(\int_{\mathbb{R}^{d}}(\int_{\mathbb{R}^d} \psi_{\infty}|f|^{q}dq)^{\frac{p}{q}}dx)^{\frac{1}{p}}<\infty\big\}.$$
			
			The symbol $\widehat{f}=\mathcal{F}(f)$ denotes the Fourier transform of $f$.
			Let $\Lambda^s f=\mathcal{F}^{-1}(|\xi|^s \widehat{f})$.
			If $s\geq0$, we can denote by $H^{s}(\mathcal{L}^{2})$ the space
			$$~~~~~~H^{s}(\mathcal{L}^{2})=\{f\big| \|f\|^2_{H^{s}(\mathcal{L}^{2})}=\int_{\mathbb{R}^{d}}\int_{\mathbb{R}^{d}}(|f|^2+|\Lambda^s f|^2)\psi_\infty dRdx<\infty\}.$$
			Then we introduce the energy and energy dissipation functionals for the fluctuation $(\rho,u,g)$ as follows:
			\begin{align*}
				~~~~~~~~~~~~~~~E_{\lambda}(t)&=\sum_{m=0,s}\left(\|h(\rho)^{\frac 1 2}\Lambda^m\rho \|^2_{L^2}+\|(1+\rho)^{\frac 1 2}\Lambda^m u\|^2_{L^2}+\|\Lambda^mg\|^2_{L^2(\mathcal{L}^{2})} \right) \\ \notag
				& ~~~~ + \sum_{m=0,s-1}\lambda\|\langle q \rangle \Lambda^m g\|^2_{L^2(\mathcal{L}^{2})}~,
			\end{align*}
			and
			\begin{align*}
				~~~~~~~~~~~~~~~~D_{\lambda}(t)&=\gamma\|\nabla\rho\|^2_{H^{s-1}}+\mu\|\nabla u\|^2_{H^{s}}+(\mu+\mu')\|{\rm div}u\|^2_{H^{s}}+\|\nabla_q g\|^2_{H^{s}(\mathcal{L}^{2})} \\ \notag
				&~~~+ \lambda\|\langle q \rangle \nabla_q g\|^2_{H^{s-1}(\mathcal{L}^{2})}~,~~~~~~~~~~~~~~~~~~~~~~~~~~~~~
			\end{align*}
			where positive constant $\lambda$ is small enough. Sometimes we write $f\lm g$ instead of $f\leq Cg$, where $C$ is a positive constant. We agree that $\nabla$ stands for $\nabla_x$ and $div$ stands for $div_x$.\\
			
			We recall the Littlewood-Paley decomposition theory and and Besov spaces.
			\begin{prop}\cite{Bahouri2011}\label{prop0}
				Let $\mathcal{C}$ be the annulus $\{\xi\in\mathbb{R}^d:\frac 3 4\leq|\xi|\leq\frac 8 3\}$. There exist radial functions $\chi$ and $\varphi$, valued in the interval $[0,1]$, belonging respectively to $\mathcal{D}(B(0,\frac 4 3))$ and $\mathcal{D}(\mathcal{C})$, and such that
				$$ \forall\xi\in\mathbb{R}^d,\ \chi(\xi)+\sum_{j\geq 0}\varphi(2^{-j}\xi)=1, $$
				$$ \forall\xi\in\mathbb{R}^d\backslash\{0\},\ \sum_{j\in\mathbb{Z}}\varphi(2^{-j}\xi)=1, ~~~$$
				$$ |j-j'|\geq 2\Rightarrow\mathrm{Supp}\ \varphi(2^{-j}\cdot)\cap \mathrm{Supp}\ \varphi(2^{-j'}\cdot)=\emptyset, $$
				$$ ~~j\geq 1\Rightarrow\mathrm{Supp}\ \chi(\cdot)\cap \mathrm{Supp}\ \varphi(2^{-j}\cdot)=\emptyset. $$
				The set $\widetilde{\mathcal{C}}=B(0,\frac 2 3)+\mathcal{C}$ is an annulus, then
				$$ |j-j'|\geq 5\Rightarrow 2^{j}\mathcal{C}\cap 2^{j'}\widetilde{\mathcal{C}}=\emptyset. ~~$$
				Further, we have
				$$ \forall\xi\in\mathbb{R}^d,\ \frac 1 2\leq\chi^2(\xi)+\sum_{j\geq 0}\varphi^2(2^{-j}\xi)\leq 1, $$
				$$ \forall\xi\in\mathbb{R}^d\backslash\{0\},\ \frac 1 2\leq\sum_{j\in\mathbb{Z}}\varphi^2(2^{-j}\xi)\leq 1. ~~~~~$$
			\end{prop}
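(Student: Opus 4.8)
The plan is to reproduce the classical construction of \cite{Bahouri2011}: build everything from a single smooth radial bump by dyadic dilation and telescoping. First I would fix a radial function $\chi\in\mathcal{D}(B(0,\tfrac43))$, valued in $[0,1]$, \emph{radially nonincreasing} in $|\xi|$, and equal to $1$ on a neighbourhood of $\overline{B(0,\tfrac34)}$ (such a $\chi$ is obtained by mollifying the indicator of an intermediate ball). Then I would set
$$\varphi(\xi):=\chi(\xi/2)-\chi(\xi),$$
and read off its properties directly: it is smooth and radial; $0\leq\varphi\leq\chi(\xi/2)\leq1$ because $\chi$ is nonincreasing; it vanishes on $\overline{B(0,\tfrac34)}$ because both terms equal $1$ there; and since $\chi(\cdot/2)$ is supported in $B(0,\tfrac83)$ we get $\mathrm{Supp}\,\varphi\subset\{3/4\leq|\xi|\leq 8/3\}=\mathcal{C}$, i.e. $\varphi\in\mathcal{D}(\mathcal{C})$.

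Next I would obtain the two partition-of-unity identities by telescoping. For $N\geq0$,
$$\chi(\xi)+\sum_{j=0}^{N}\varphi(2^{-j}\xi)=\chi(\xi)+\sum_{j=0}^{N}\bigl(\chi(2^{-j-1}\xi)-\chi(2^{-j}\xi)\bigr)=\chi(2^{-N-1}\xi),$$
which tends to $\chi(0)=1$ as $N\to\infty$; moreover for fixed $\xi$ the series is locally finite since $\varphi(2^{-j}\xi)=0$ unless $\tfrac34\leq 2^{-j}|\xi|\leq\tfrac83$. This gives the first identity. For $\xi\neq0$, telescoping instead from $j=-M$ to $N$ yields $\chi(2^{-N-1}\xi)-\chi(2^{M}\xi)$, and as $M,N\to\infty$ the first term tends to $1$ while the second is eventually $0$ (the support of $\chi$ is compact and $|2^{M}\xi|\to\infty$); this gives the second identity.

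The support-separation relations then reduce to elementary comparisons of radii. One has $\mathrm{Supp}\,\varphi(2^{-j}\cdot)=2^{j}\mathcal{C}=\{\xi:\tfrac34 2^{j}\leq|\xi|\leq\tfrac83 2^{j}\}$, so if $|j-j'|\geq2$ the inner radius of the wider annulus exceeds the outer radius of the narrower one and the two supports are disjoint; since $\mathrm{Supp}\,\chi\subset B(0,\tfrac43)$ while $2^{j}\mathcal{C}\subset\{|\xi|\geq\tfrac32\}$ for $j\geq1$, $\chi$ and $\varphi(2^{-j}\cdot)$ have disjoint supports for $j\geq1$; and $\widetilde{\mathcal{C}}=B(0,\tfrac23)+\mathcal{C}$ is the annulus with radii essentially $\tfrac1{12}$ and $\tfrac{10}{3}$, so comparing $\tfrac34 2^{j}$ with $\tfrac{10}{3}2^{j'}$ (resp. $\tfrac83 2^{j}$ with $\tfrac1{12}2^{j'}$) shows $2^{j}\mathcal{C}\cap2^{j'}\widetilde{\mathcal{C}}=\emptyset$ once $|j-j'|\geq5$.

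Finally, for the square-function bounds I would use that, by these separations, at each fixed $\xi$ at most two of the functions in the family $\{\chi\}\cup\{\varphi(2^{-j}\cdot):j\geq0\}$ are nonzero; denoting their values $a,b\in[0,1]$, the first identity gives $a+b=1$, whence $a^{2}+b^{2}\leq a+b=1$ and $a^{2}+b^{2}\geq\tfrac12(a+b)^{2}=\tfrac12$, which is the first estimate (and on $|\xi|<3/4$ only $\chi$ contributes, giving $\chi^2\in[\tfrac12,1]$ via $a+b=1$ with $b=0$). The second estimate follows identically from the $\xi\neq0$ identity applied to the family $\{\varphi(2^{-j}\cdot):j\in\mathbb{Z}\}$. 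There is no substantive obstacle here; the only points needing a little care are arranging $\chi$ to be radially nonincreasing (so that $\varphi\geq0$ and $\varphi\leq1$) and justifying the ``at most two nonzero'' claim, which is exactly what the separation relations deliver.
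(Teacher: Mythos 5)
Your construction is correct and is essentially the standard one: the paper gives no proof of this proposition (it is simply quoted from the cited reference \cite{Bahouri2011}), and your telescoping argument $\varphi(\xi)=\chi(\xi/2)-\chi(\xi)$ with a radially nonincreasing $\chi$, together with the radius comparisons for the support properties and the ``at most two nonzero terms'' observation for the bounds $\tfrac12\leq\chi^2+\sum\varphi^2\leq1$, is exactly the proof found there. No gaps worth noting beyond the care you already flag about choosing $\chi$ nonincreasing and equal to $1$ near $\overline{B(0,\tfrac34)}$.
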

			
			$\mathcal{F}$ represents the Fourier transform and  its inverse is denoted by $\mathcal{F}^{-1}$.
			Let $u$ be a tempered distribution in $\mathcal{S}'(\mathbb{R}^d)$. For all $j\in\mathbb{Z}$ , define
			$$~~~~~~
			\dot\Delta_j u=\mathcal{F}^{-1}\left(\varphi(2^{-j}\cdot)\mathcal{F}u\right)~~~~~~and~~~~~~
			\dot S_j u=\sum_{j'<j}\dot\Delta_{j'}u~.~~~~~~~~~~~~~~~
			$$
			Then the Littlewood-Paley decomposition is given as follows:
			$$ u=\sum_{j\in\mathbb{Z}}\dot\Delta_j u \quad \text{in} \ \ \ \mathcal{S}^{'}_h(\mathbb{R}^d). ~~~~~$$
			Let $s\in\mathbb{R},\ 1\leq p,r\leq\infty.$ The homogeneous Besov space $\dot B^s_{p,r}$ and $\dot B^s_{p,r}(\mathcal{L}^{p'})$ is defined by
			$$ \dot B^s_{p,r}=\{u\in S^{'}_h:\|u\|_{\dot B^s_{p,r}}=\Big\|\left(2^{js}\|\dot\Delta_j u\|_{L^p}\right)_j \Big\|_{l^r(\mathbb{Z})}<\infty\}, ~~~~$$
			$$ \dot B^s_{p,r}(\mathcal{L}^{p'})=\{\phi\in S^{'}_h:\|\phi\|_{\dot B^s_{p,r}(\mathcal{L}^{p'})}=\Big\|\left(2^{js}\|\dot\Delta_j \phi\|_{L_{x}^{p}(\mathcal{L}^{p'})}\right)_j \Big\|_{l^r(\mathbb{Z})}<\infty\}.$$
			
			The following lemma is about the embedding between Lesbesgue and Besov spaces.
			\begin{lemm}\cite{Bahouri2011}\label{lemma1}
				Let $1\leq p\leq 2$ and $d\geq3$. Then it holds that 
				$$L^{p}\hookrightarrow \dot{B}^{\frac{d}{2}-\frac d p}_{2,\infty}.$$
				Moreover, it follows that
				$$L^2=\dot{B}^0_{2,2} .~~~$$
			\end{lemm}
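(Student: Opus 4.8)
The statement is a classical embedding, so the plan is to reduce both parts to elementary Fourier-analytic estimates. For the first inclusion $L^p \hookrightarrow \dot B^{d/2-d/p}_{2,\infty}$ with $1\le p\le 2$, fix $u\in L^p$ and a dyadic block index $j\in\mathbb Z$. Since $\dot\Delta_j u = \mathcal F^{-1}\bigl(\varphi(2^{-j}\cdot)\widehat u\bigr)$, its Fourier transform is supported in the annulus $2^j\mathcal C = \{\xi : \tfrac34\,2^j \le |\xi|\le \tfrac83\,2^j\}$, whose Lebesgue measure is comparable to $2^{jd}$. The key step is the Bernstein-type inequality: by Hausdorff–Young, $\|\widehat{\dot\Delta_j u}\|_{L^{p'}} \lesssim \|\dot\Delta_j u\|_{L^p} \lesssim \|u\|_{L^p}$ (the last step because $\dot\Delta_j$ is a convolution with an $L^1$-bounded kernel, uniformly in $j$, by scaling); then Hölder on the annulus of measure $\sim 2^{jd}$ gives
\begin{align*}
\|\dot\Delta_j u\|_{L^2} = \|\widehat{\dot\Delta_j u}\|_{L^2}
\le \bigl|2^j\mathcal C\bigr|^{\frac12-\frac1{p'}}\,\|\widehat{\dot\Delta_j u}\|_{L^{p'}}
\lesssim 2^{jd(\frac1p-\frac12)}\,\|u\|_{L^p}.
\end{align*}
Multiplying by $2^{j(d/2-d/p)}$ shows $2^{j(d/2-d/p)}\|\dot\Delta_j u\|_{L^2}\lesssim \|u\|_{L^p}$ uniformly in $j$, which is exactly $\|u\|_{\dot B^{d/2-d/p}_{2,\infty}}\lesssim\|u\|_{L^p}$.

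For the identity $L^2 = \dot B^0_{2,2}$, I would use the almost-orthogonality furnished by Proposition \ref{prop0}: the lower and upper bounds $\tfrac12 \le \chi^2(\xi) + \sum_{j\ge0}\varphi^2(2^{-j}\xi)\le 1$ and $\tfrac12 \le \sum_{j\in\mathbb Z}\varphi^2(2^{-j}\xi)\le 1$ translate, via Plancherel, into
\begin{align*}
\tfrac12\,\|u\|_{L^2}^2 \le \sum_{j\in\mathbb Z}\|\dot\Delta_j u\|_{L^2}^2 = \int_{\mathbb R^d}\Bigl(\sum_{j\in\mathbb Z}\varphi^2(2^{-j}\xi)\Bigr)|\widehat u(\xi)|^2\,d\xi \le \|u\|_{L^2}^2,
\end{align*}
so $\|u\|_{\dot B^0_{2,2}}^2 = \sum_j \|\dot\Delta_j u\|_{L^2}^2 \simeq \|u\|_{L^2}^2$, with the caveat (standard for homogeneous Besov spaces) that one restricts to $u\in\mathcal S'_h$ so that the low-frequency part is controlled.

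The only genuinely delicate point — and the one I would spend the most care on — is the uniform-in-$j$ bound $\|\dot\Delta_j u\|_{L^p}\lesssim \|u\|_{L^p}$ used in the first part: this follows from Young's convolution inequality once one checks that the convolution kernel $h_j := \mathcal F^{-1}(\varphi(2^{-j}\cdot))$ satisfies $\|h_j\|_{L^1} = \|h_0\|_{L^1}$ by the dilation $h_j(x) = 2^{jd} h_0(2^j x)$, with $h_0\in\mathcal S(\mathbb R^d)\subset L^1$ because $\varphi\in\mathcal D(\mathcal C)$. Everything else is bookkeeping with Hausdorff–Young and Hölder on dyadic annuli; no smallness, no nonlinear structure of \eqref{eq1} enters here. (One should also note the hypothesis $d\ge 3$ in the lemma is not actually needed for the argument — $1\le p\le 2$ alone suffices — but I would simply follow the stated hypotheses.)
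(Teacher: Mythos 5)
Your proof is correct and follows essentially the same (standard) route as the source the paper cites for this lemma, \cite{Bahouri2011}, since the paper itself states the lemma without proof: Bernstein-type estimates on dyadic annuli via Hausdorff--Young and H\"older for the embedding $L^{p}\hookrightarrow \dot{B}^{\frac d2-\frac dp}_{2,\infty}$, and Plancherel together with the almost-orthogonality bounds of Proposition \ref{prop0} for $L^2=\dot{B}^0_{2,2}$. Your side remark that the hypothesis $d\geq 3$ is not needed for the argument (only $1\leq p\leq 2$) is also accurate.
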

			
			The following lemma is the Gagliardo-Nirenberg inequality.
			\begin{lemm}\cite{Nirenberg}\label{Lemma2}
				Let $d\geq2,~p\in[2,+\infty)$ and $0\leq s,s_1\leq s_2$, then there exists a constant $C$ such that
				$$\|\Lambda^{s}f\|_{L^{p}}\leq C \|\Lambda^{s_1}f\|^{1-\theta}_{L^{2}}\|\Lambda^{s_2} f\|^{\theta}_{L^{2}},$$
				where $0\leq\theta\leq1$ and $\theta$ satisfy
				$$ s+d(\frac 1 2 -\frac 1 p)=s_1 (1-\theta)+\theta s_2.$$
				Note that we require that $0<\theta<1$, $0\leq s_1\leq s$, when $p=\infty$.
			\end{lemm}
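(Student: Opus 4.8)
The plan is to establish this classical interpolation inequality through a dyadic (Littlewood--Paley) decomposition combined with Bernstein's inequality and an optimized splitting of the frequency sum; I only sketch the main steps. Write $f=\sum_{j\in\mathbb{Z}}\dot\Delta_j f$ and set $\alpha:=s+d(\frac12-\frac1p)\geq s$, so that the balance condition becomes $\alpha=(1-\theta)s_1+\theta s_2$. Since $\theta\in[0,1]$ and $s_1\leq s_2$, this forces $s_1\leq\alpha\leq s_2$, with $\alpha-s_1=\theta(s_2-s_1)$ and $s_2-\alpha=(1-\theta)(s_2-s_1)$. On each dyadic annulus the operator $\Lambda^s$ acts like multiplication by a quantity of size $2^{js}$, so together with Bernstein's inequality $\|\dot\Delta_j F\|_{L^p}\lesssim 2^{jd(\frac12-\frac1p)}\|\dot\Delta_j F\|_{L^2}$ (valid for every $p\in[2,\infty]$) one obtains $\|\dot\Delta_j\Lambda^s f\|_{L^p}\lesssim 2^{j\alpha}\|\dot\Delta_j f\|_{L^2}$; combining this with the two spectral bounds $\|\dot\Delta_j f\|_{L^2}\lesssim 2^{-js_i}\|\dot\Delta_j\Lambda^{s_i}f\|_{L^2}$ for $i=1,2$ yields
$$\|\dot\Delta_j\Lambda^s f\|_{L^p}\lesssim 2^{j(\alpha-s_1)}\|\dot\Delta_j\Lambda^{s_1}f\|_{L^2}\qquad\text{and}\qquad\|\dot\Delta_j\Lambda^s f\|_{L^p}\lesssim 2^{j(\alpha-s_2)}\|\dot\Delta_j\Lambda^{s_2}f\|_{L^2}.$$

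Next I would reassemble $\Lambda^s f$ from its dyadic pieces. For $2\leq p<\infty$ this is done via the Littlewood--Paley square-function estimate followed by Minkowski's inequality in $L^{p/2}$, giving $\|\Lambda^s f\|_{L^p}^2\lesssim\sum_j\|\dot\Delta_j\Lambda^s f\|_{L^p}^2$; for $p=\infty$ one uses the crude bound $\|\Lambda^s f\|_{L^\infty}\leq\sum_j\|\dot\Delta_j\Lambda^s f\|_{L^\infty}$. In both cases one splits the sum over $j$ at an integer $N$: for $j\leq N$ apply the $s_1$-bound together with $2^{j(\alpha-s_1)}\leq 2^{N(\alpha-s_1)}$ (since $\alpha\geq s_1$), and for $j>N$ apply the $s_2$-bound together with $2^{j(\alpha-s_2)}\leq 2^{N(\alpha-s_2)}$ (since $\alpha\leq s_2$); then almost-orthogonality $\sum_j\|\dot\Delta_j\Lambda^{s_i}f\|_{L^2}^2\lesssim\|\Lambda^{s_i}f\|_{L^2}^2$ (in the case $p<\infty$), or a Cauchy--Schwarz estimate of the two geometric sums (in the case $p=\infty$), reduces everything to
$$\|\Lambda^s f\|_{L^p}\lesssim 2^{N(\alpha-s_1)}\|\Lambda^{s_1}f\|_{L^2}+2^{N(\alpha-s_2)}\|\Lambda^{s_2}f\|_{L^2}.$$
Assuming $s_1<s_2$ (the case $s_1=s_2$ being the trivial Sobolev embedding), one finally optimizes over $N\in\mathbb{Z}$ by choosing $2^N$ comparable to $\big(\|\Lambda^{s_2}f\|_{L^2}/\|\Lambda^{s_1}f\|_{L^2}\big)^{1/(s_2-s_1)}$, so that the two terms balance (the integrality of $N$ costs only a fixed constant); since $(\alpha-s_1)/(s_2-s_1)=\theta$ and $(s_2-\alpha)/(s_2-s_1)=1-\theta$, the bound becomes $C\|\Lambda^{s_1}f\|_{L^2}^{1-\theta}\|\Lambda^{s_2}f\|_{L^2}^{\theta}$, which is the assertion.

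The main point requiring care is the convergence of the two partial sums. For $2\leq p<\infty$ the $\ell^2$ almost-orthogonality handles the endpoints $\theta\in\{0,1\}$ automatically: when $\alpha=s_1$ (resp. $\alpha=s_2$) the square-function estimate directly gives $\|\Lambda^s f\|_{L^p}\lesssim\|\Lambda^{s_1}f\|_{L^2}$ (resp. $\lesssim\|\Lambda^{s_2}f\|_{L^2}$), so only $0\leq s_1\leq s_2$ is needed there. For $p=\infty$, however, one has only $\ell^1$-type summation, and the geometric series $\sum_{j\leq N}2^{j(\alpha-s_1)}$ and $\sum_{j>N}2^{j(\alpha-s_2)}$ converge precisely when $\alpha-s_1>0$ and $\alpha-s_2<0$, that is, when $0<\theta<1$; together with $\alpha=s+\frac d2>s$ this is exactly why the extra hypotheses $0<\theta<1$ and $0\leq s_1\leq s$ are imposed in that endpoint. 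Assembling the $p<\infty$ and $p=\infty$ cases then completes the argument.
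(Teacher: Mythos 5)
Your argument is correct, but note that the paper itself gives no proof of this lemma: it is quoted directly from \cite{Nirenberg}, so the only comparison available is with the classical real-variable proof behind that citation. Your route is the standard frequency-space proof of the homogeneous, $L^2$-based Gagliardo--Nirenberg inequality: Bernstein plus the two spectral bounds $\|\dot\Delta_j f\|_{L^2}\lesssim 2^{-js_i}\|\dot\Delta_j\Lambda^{s_i}f\|_{L^2}$ give the localized estimates, the Littlewood--Paley square-function theorem together with Minkowski in $L^{p/2}$ reassembles them for $2\le p<\infty$, and optimizing the splitting index $N$ (with $(\alpha-s_1)/(s_2-s_1)=\theta$, $(s_2-\alpha)/(s_2-s_1)=1-\theta$) produces exactly the claimed exponents; the endpoints $\theta\in\{0,1\}$ are absorbed by the $\ell^2$ summation, while at $p=\infty$ the $\ell^1$-type summation forces $0<\theta<1$, matching the stated restriction. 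Nirenberg's original argument instead works with integer derivatives and general $L^{p_i}$ norms via elementary one-dimensional interpolation, H\"older and scaling; it is more general in the exponents but less direct for fractional $\Lambda^s$ with $L^2$-based right-hand sides, whereas your proof handles fractional orders cleanly and makes the endpoint hypotheses transparent, at the price of invoking the square-function theorem (see \cite{Bahouri2011}) as a black box. Two minor points to tighten: your $p=\infty$ argument uses only $0<\theta<1$ and never the hypothesis $0\le s_1\le s$, so you should not present that hypothesis as being forced by the convergence of the geometric series --- it is simply part of the quoted statement; and at $p=\infty$ you should add one line ensuring that the absolutely convergent series $\sum_j\dot\Delta_j\Lambda^s f$ actually represents $\Lambda^s f$ (no low-frequency polynomial discrepancy), which is automatic in the paper's applications since $f\in L^2$, hence $f\in\mathcal{S}'_h$.
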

			
			The following lemma allows us to estimate the extra stress tensor $\tau$.
			\begin{lemm}\cite{2017Global}\label{Lemma3}
				Assume $g\in H^s(\mathcal{L}^{2})$ with $\int_{\mathbb{R}^{d}} g\psi_\infty dq=0$, then it follows that
				\begin{align*}
					\left\{
					\begin{array}{ll}
						\|\nabla_q\mathcal{U}g\|_{\dot{H}^\sigma(\mathcal{L}^{2})}+\|qg\|_{\dot{H}^\sigma(\mathcal{L}^{2})}\lesssim\|\nabla_qg\|_{\dot{H}^\sigma(\mathcal{L}^{2})},\\
						\|q\nabla_q\mathcal{U}g\|_{\dot{H}^\sigma(\mathcal{L}^{2})}+\||q|^2g\|_{{\dot{H}^\sigma(\mathcal{L}^{2})}}\lesssim\|\langle q\rangle\nabla_qg\|_{\dot{H}^\sigma(\mathcal{L}^{2})},
					\end{array}
					\right.
				\end{align*}
				for any $\sigma\in[0,s]$.
			\end{lemm}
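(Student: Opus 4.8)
The plan is to start from the global-in-time solution of \eqref{eq1} furnished by Theorem \ref{th1}, which gives $\sup_{t}E_\lambda(t)+\int_0^\infty D_\lambda(t)\,dt\le\varepsilon$, and to turn this qualitative smallness into the quantitative rates \eqref{decay} by a Fourier-splitting (Schonbek) argument refined with Littlewood--Paley localization. The first stage extracts a rough decay: testing the $(\rho,u)$-equations of \eqref{eq1} against $(\rho,u)$ and the $g$-equation against $g\psi_\infty$, using the spectral gap $\langle\mathcal L g,g\rangle_{\mathcal L^2}=\|\nabla_q g\|_{\mathcal L^2}^2$ and the cancellation between $\tfrac1{1+\rho}\,{\rm div}\,\tau$ in the $u$-equation and ${\rm div}\,u+\nabla u\,q\cdot\nabla_q\mathcal U$ in the $g$-equation, one gets a differential inequality $\tfrac{d}{dt}\widetilde E(t)+c\widetilde D(t)\le(\text{cubic})$ for the lowest-order energy. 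Splitting frequency space into the time-dependent ball $S(t)=\{\xi:|\xi|^2\le C/(1+t)\}$ and its complement, estimating the high-frequency part of $(\rho,u,g)$ by $\widetilde D$ and the low-frequency part crudely, a Gronwall argument yields a logarithmic decay $\|(\rho,u)\|_{L^2}+\|g\|_{L^2(\mathcal L^2)}\lesssim\ln^{-k}(e+t)$; a time-weighted energy estimate with weight $(1+t)^{\alpha}$ then upgrades this to $\|(\rho,u)\|_{L^2}\lesssim(1+t)^{-1/4}$.

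The second stage is the three-step bootstrap announced in the introduction, needed because compressibility prevents an $L^1$ bound on $u$. \textbf{Step 1}: apply $\dot\Delta_j$ to \eqref{eq1} and, using Bernstein's inequalities together with Lemma \ref{Lemma2} and Lemma \ref{Lemma3}, show every nonlinearity is (super)quadratic in already-controlled norms, so the $\dot B^{-1/2}_{2,\infty}$ norm of $(\rho,u,\int_{\mathbb R^2}q\otimes\nabla_q\mathcal U\,g\,dq)$ stays bounded on $[0,\infty)$, as in \cite{FENEdecay}. \textbf{Step 2}: feed this negative-index bound into the Fourier-splitting identity so that the low-frequency part of $\|(\rho,u)\|_{L^2}$ picks up an extra $(1+t)^{-1/4}$-type factor, improving the rate to $\|u\|_{L^2}\lesssim(1+t)^{-5/16}$. \textbf{Step 3}: with this sharper rate the nonlinearities become integrable enough to propagate the stronger norm $\dot B^{-1}_{2,\infty}$ — this is exactly where the hypothesis $(\rho_0,u_0,\int_{\mathbb R^2}q\otimes\nabla_q\mathcal U\,g_0\,dq)\in\dot B^{-1}_{2,\infty}$ is used — and iterating the Fourier-splitting argument once more gives the optimal $\|(\rho,u)\|_{L^2}\lesssim(1+t)^{-1/2}$. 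The intermediate-order rates $\|\Lambda^\sigma(\rho,u)\|_{L^2}\lesssim(1+t)^{-1/2-\sigma/2}$ for $\sigma\in[0,s-1]$ follow by applying the same splitting to $\Lambda^\sigma$ of the system, using that $D_\lambda$ controls the dissipation of $u$ in $\dot H^{\sigma+1}$ and of $g$ in $\dot H^\sigma(\mathcal L^2)$ with a $\nabla_q$, together with Gagliardo--Nirenberg interpolation to treat the intermediate frequencies and the fact that the full energy $E_\lambda$ is finite only at orders $0$ and $s$.

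The last and hardest stage is the top-order $\dot H^s$ bound for $(\rho,u)$ and the faster rate for $g$. At order $s$ the standard device of perturbing $\|\Lambda^s(\rho,u)\|_{L^2}^2$ by $\eta\langle\Lambda^{s-1}u,\Lambda^s\rho\rangle$ to manufacture the dissipation $\|\Lambda^s\rho\|_{L^2}^2$ fails, since that cross term is no longer equivalent to $\|\Lambda^s(\rho,u)\|_{L^2}^2$. The resolution is the critical Fourier-splitting estimate of Lemma \ref{2lem2}: one keeps the cross term only on high frequencies $|\xi|\gtrsim1$, so that only $\|\Lambda^s\rho^{{\rm high}}\|_{L^2}^2$ is produced, while the low-frequency part of $\Lambda^s\rho$ is absorbed using the already-established decay of $\|\Lambda^{s-1}(\rho,u)\|_{L^2}$; the linear term ${\rm div}\,u$ of the $\rho$-equation contributes $\|\Lambda^s u^{{\rm high}}\|_{L^2}^2$, which is controlled through the time weight $(1+t)^{-1}$ via $(1+t)^{-1}\|\Lambda^s u^{{\rm high}}\|_{L^2}^2\lesssim\|\Lambda^{s+1}u\|_{L^2}^2\le D_\lambda(t)$. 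This weight is exactly critical — with $(1+t)^{-\alpha}$, $\alpha<1$ gives too little decay to close the Gronwall inequality while $\alpha>1$ destroys the absorption. Finally, since $g$ carries a genuine $\nabla_q$-dissipation and is forced only through ${\rm div}\,u$ and $\nabla u\,q\cdot\nabla_q\mathcal U$, a Duhamel estimate with the Fokker--Planck semigroup generated by $\mathcal L$ transfers the $(\rho,u)$ rates to $g$ with a gain of $(1+t)^{-1/2}$, giving $\|\Lambda^\delta g\|_{L^2(\mathcal L^2)}\lesssim(1+t)^{-1-\delta/2}$ for $\delta\in[0,s-1]$; at the top orders $\eta\in(s-1,s]$ the source ${\rm div}\,u$ costs one derivative, so the rate saturates at $(1+t)^{-1/2-s/2}$. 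I expect the main obstacle to be the bookkeeping in Lemma \ref{2lem2} — deciding precisely which pieces are assigned to the time-dependent low-frequency ball and which to the fixed high-frequency region, and verifying that every nonlinear contribution is genuinely of lower order so the critical weight really closes.
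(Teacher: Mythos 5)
There is a fundamental mismatch: your proposal does not address the statement at all. The statement to be proved is Lemma \ref{Lemma3}, a weighted Poincar\'e/Hardy-type inequality in the microscopic variable $q$ with respect to the measure $\psi_\infty\,dq$: under the zero-average condition $\int_{\mathbb{R}^d}g\psi_\infty\,dq=0$ one must show $\|\nabla_q\mathcal{U}g\|_{\dot H^\sigma(\mathcal{L}^2)}+\|qg\|_{\dot H^\sigma(\mathcal{L}^2)}\lesssim\|\nabla_q g\|_{\dot H^\sigma(\mathcal{L}^2)}$ and the analogous estimate with the extra weight $\langle q\rangle$. What you wrote is instead a road map for Theorem \ref{th2} (Fourier splitting, negative-index Besov bootstrap, the critical estimate of Lemma \ref{2lem2}, Duhamel for $g$); none of that machinery produces, or even touches, the $q$-variable inequalities in question. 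Worse, your Step 1 explicitly invokes Lemma \ref{Lemma3} as an ingredient, so read as a proof of that lemma the argument would be circular.

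An actual proof lives entirely in the $q$-variable and uses the structural hypotheses \eqref{potential1}--\eqref{potential2}. Since $\Lambda^\sigma$ acts only in $x$ while the weights $q$, $\nabla_q\mathcal{U}$, $\langle q\rangle$ depend only on $q$, and since $\int_{\mathbb{R}^d}\Lambda^\sigma g\,\psi_\infty\,dq=\Lambda^\sigma\!\int_{\mathbb{R}^d}g\,\psi_\infty\,dq=0$, it suffices to prove the case $\sigma=0$ and apply it pointwise in $x$ to $\Lambda^\sigma g$. For $\sigma=0$ one first gets the Poincar\'e inequality $\|g\|_{\mathcal{L}^2}\lesssim\|\nabla_q g\|_{\mathcal{L}^2}$ for mean-zero $g$ (this is where $\int g\psi_\infty dq=0$ enters), and then integrates by parts using $\nabla_q\psi_\infty=-\nabla_q\mathcal{U}\,\psi_\infty$:
\begin{align*}
\int_{\mathbb{R}^d}|\nabla_q\mathcal{U}|^2g^2\psi_\infty\,dq
=\int_{\mathbb{R}^d}\Delta_q\mathcal{U}\,g^2\psi_\infty\,dq
+2\int_{\mathbb{R}^d}g\,\nabla_q\mathcal{U}\cdot\nabla_q g\,\psi_\infty\,dq,
\end{align*}
after which the assumption $\Delta_q\mathcal{U}\le C+\delta|\nabla_q\mathcal{U}|^2$ with $\delta\in(0,1)$ lets you absorb the quadratic weight on the left, the cross term is handled by Cauchy--Schwarz and absorption, and the remaining $C\|g\|_{\mathcal{L}^2}^2$ is controlled by the Poincar\'e inequality; the bound for $\|qg\|$ then follows from $|q|\lesssim1+|\nabla_q\mathcal{U}|$, and the second line of the lemma is the same computation run with the extra weight $\langle q\rangle$ (or $|q|^2$), using $\int|q|^4\psi_\infty dq\le C$ and the estimates in \eqref{potential2}, which produces the right-hand side $\|\langle q\rangle\nabla_q g\|_{\dot H^\sigma(\mathcal{L}^2)}$. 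None of these steps appears in your proposal, so as it stands there is no proof of the stated lemma.
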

			
			\begin{lemm}\cite{2017Global}\label{Lemma4}
				Assume $g\in H^s(\mathcal{L}^{2})$, then it holds that
				$$|\tau(g)|^2\lesssim \|g\|^2_{\mathcal{L}^{2}}\lesssim\|\nabla_qg\|^2_{\mathcal{L}^{2}} .$$
			\end{lemm}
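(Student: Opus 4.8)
The statement consists of two separate inequalities, and I would prove them in turn, both as pointwise-in-$(t,x)$ estimates in the $q$ variable.

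\emph{The bound $|\tau(g)|^2\lesssim\|g\|^2_{\mathcal{L}^2}$.} By definition $\tau_{ij}(g)=\int_{\mathbb{R}^d}q_i\nabla_{q_j}\mathcal{U}\,g\,\psi_\infty\,dq$. I would apply the Cauchy--Schwarz inequality with respect to the measure $\psi_\infty\,dq$, writing the integrand as $(q_i\nabla_{q_j}\mathcal{U}\sqrt{\psi_\infty})\cdot(g\sqrt{\psi_\infty})$, to get
\begin{align*}
|\tau_{ij}(g)|^2\leq \Big(\int_{\mathbb{R}^d}|q_i\nabla_{q_j}\mathcal{U}|^2\psi_\infty\,dq\Big)\Big(\int_{\mathbb{R}^d}|g|^2\psi_\infty\,dq\Big).
\end{align*}
The first factor is a finite constant: from the first line of \eqref{potential1}, $|q|\lesssim 1+|\nabla_q\mathcal{U}|$, so $|q\nabla_q\mathcal{U}|^2\lesssim |\nabla_q\mathcal{U}|^2+|\nabla_q\mathcal{U}|^4$; combining with the moment bounds $\int|\nabla_q\mathcal{U}|^2\psi_\infty\,dq\leq C$ and $\int|q|^4\psi_\infty\,dq\leq C$ of \eqref{potential1} (and $|\nabla_q\mathcal{U}|^4\lesssim (|q|+1)^2|\nabla_q\mathcal{U}|^2$, or more directly the bound $\int|\nabla_q^k(q\nabla_q\mathcal{U}\sqrt{\psi_\infty})|^2 dq\leq C$ from \eqref{potential2} with $k$ replaced by $0$ in the spirit of that hypothesis) shows $\int|q_i\nabla_{q_j}\mathcal{U}|^2\psi_\infty\,dq\leq C$. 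Summing over $i,j$ gives $|\tau(g)|^2\lesssim \|g\|_{\mathcal{L}^2}^2$.

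\emph{The bound $\|g\|^2_{\mathcal{L}^2}\lesssim\|\nabla_q g\|^2_{\mathcal{L}^2}$.} This is a weighted Poincaré inequality for the Gaussian-type measure $\psi_\infty\,dq=\frac{e^{-\mathcal{U}(q)}}{Z}dq$, valid because $g$ has zero mean against $\psi_\infty$, i.e. $\int_{\mathbb{R}^d}g\,\psi_\infty\,dq=0$ (this is exactly the hypothesis $\int g\psi_\infty\,dq=0$ carried along in \eqref{eq1} and Theorem \ref{th1}). The cleanest route is to invoke the spectral gap of the Fokker--Planck operator $\mathcal{L}g=-\frac1{\psi_\infty}\nabla_q\cdot(\psi_\infty\nabla_q g)$: integration by parts gives $\langle \mathcal{L}g,g\rangle_{\mathcal{L}^2}=\int|\nabla_q g|^2\psi_\infty\,dq=\|\nabla_q g\|_{\mathcal{L}^2}^2$, and the Bakry--\'Emery / log-Sobolev theory (or the assumptions on $\mathcal{U}$ in \eqref{potential1}, specifically $\Delta_q\mathcal{U}\leq C+\delta|\nabla_q\mathcal{U}|^2$ with $\delta\in(0,1)$, which is precisely the condition guaranteeing a Poincaré inequality for $\psi_\infty$) yields a constant $c_0>0$ with $\|\nabla_q g\|_{\mathcal{L}^2}^2\geq c_0\|g-\bar g\|_{\mathcal{L}^2}^2$ where $\bar g=\int g\psi_\infty\,dq$. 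Since $\bar g=0$ by hypothesis, this is $\|\nabla_q g\|_{\mathcal{L}^2}^2\geq c_0\|g\|_{\mathcal{L}^2}^2$, which is the claim.

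The routine part is the first inequality; \textbf{the main obstacle is the weighted Poincaré inequality}, whose validity rests entirely on the structural assumption $\Delta_q\mathcal{U}\leq C+\delta|\nabla_q\mathcal{U}|^2$ ($\delta<1$) — for the Hookean case $\mathcal{U}=\tfrac12|q|^2$ this is the classical Gaussian Poincaré inequality with an explicit constant, but in the general case one must cite the Bakry--\'Emery criterion or an equivalent result (as is done in \cite{2017Global}). I would simply reference this known fact about $\psi_\infty$ rather than re-deriving it, since it is part of the standing assumptions \eqref{potential1}; the contribution of the lemma is then just the packaging of the spectral-gap estimate together with the elementary Cauchy--Schwarz bound on $\tau$.
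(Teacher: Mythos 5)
The paper does not prove this lemma at all: it is quoted verbatim from \cite{2017Global}, so the only fair comparison is with the standard argument in that reference, which is exactly the route you take — Cauchy--Schwarz in $q$ against the measure $\psi_\infty\,dq$ for the bound on $\tau$, and the weighted Poincar\'e (spectral-gap) inequality for $\psi_\infty$, using the zero-mean condition $\int_{\mathbb{R}^d}g\psi_\infty\,dq=0$, for the second inequality. Your observation that the zero-mean hypothesis is indispensable (and is silently supplied by Theorem \ref{th1} and the constraint propagated by \eqref{eq1}) is correct; as stated, the lemma is false for constant-in-$q$ perturbations, so importing that hypothesis is the right reading.

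The one genuinely shaky step is your justification that $\int_{\mathbb{R}^d}|q_i\nabla_{q_j}\mathcal{U}|^2\psi_\infty\,dq\leq C$. The inequality $|\nabla_q\mathcal{U}|^4\lesssim(1+|q|)^2|\nabla_q\mathcal{U}|^2$ uses $|\nabla_q\mathcal{U}|\lesssim 1+|q|$, which is \emph{not} among the assumptions — \eqref{potential1} only gives the reverse bound $|q|\lesssim 1+|\nabla_q\mathcal{U}|$ — and the fallback appeal to \eqref{potential2} ``with $k=0$'' is not licensed either, since there $k\in[1,3]$. A correct derivation from \eqref{potential1} goes through integration by parts: writing $\psi_\infty=e^{-\mathcal{U}}/Z$,
\begin{align*}
\int|q|^2|\nabla_q\mathcal{U}|^2 e^{-\mathcal{U}}dq=\int\nabla_q\cdot\bigl(|q|^2\nabla_q\mathcal{U}\bigr)e^{-\mathcal{U}}dq
=\int\bigl(2q\cdot\nabla_q\mathcal{U}+|q|^2\Delta_q\mathcal{U}\bigr)e^{-\mathcal{U}}dq,
\end{align*}
and then $\Delta_q\mathcal{U}\leq C+\delta|\nabla_q\mathcal{U}|^2$ with $\delta<1$, Young's inequality on $2|q||\nabla_q\mathcal{U}|$, and the moment bound $\int|q|^4\psi_\infty\,dq\leq C$ let you absorb the quadratic term and close the estimate; alternatively one simply cites this moment bound as part of the framework of \cite{2017Global}. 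With that repair (and with the Poincar\'e inequality for $\psi_\infty$ taken as the known consequence of the structural assumptions on $\mathcal{U}$, as you propose), your proof is complete and coincides with the intended one.
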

			
			\begin{lemm}\cite{Moser1966A}\label{Lemma5}
				Let $s\geq 1$. Assume $p_1,...,p_4$ and $p\in (1,\infty)$ with $\frac 1 p =\frac 1 {p_1}+\frac 1 {p_2}=\frac 1 {p_3}+\frac 1 {p_4}$. Then it holds that
				$$\|[\Lambda^s, f]g\|_{L^p}\leq C\left(\|\Lambda^{s}f\|_{L^{p_1}}\|g\|_{L^{p_2}}+\|\nabla f\|_{L^{p_3}}\|\Lambda^{s-1}g\|_{L^{p_4}}\right).~~$$
				Analogously,
				$$\|[\Lambda^s, f]g\|_{L^2(\mathcal{L}^{2})}\leq C\left(\|\Lambda^{s}f\|_{L^2}\|g\|_{L^\infty(\mathcal{L}^{2})}+\|\nabla f\|_{L^\infty}\|\Lambda^{s-1}g\|_{L^2(\mathcal{L}^{2})}\right).$$
			\end{lemm}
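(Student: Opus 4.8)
The statement is a Kato--Ponce type commutator estimate; I would prove it with Bony's paraproduct decomposition together with the (Hilbert-space-valued) Littlewood--Paley square-function theorem and the Fefferman--Stein vector-valued maximal inequality. Start from $[\Lambda^s,f]g=\Lambda^s(fg)-f\Lambda^s g$ and write $fg=T_fg+T_gf+R(f,g)$, where $T_fg=\sum_jS_{j-1}f\,\dot\Delta_jg$, $T_gf=\sum_jS_{j-1}g\,\dot\Delta_jf$ and $R(f,g)=\sum_j\dot\Delta_jf\,\widetilde{\dot\Delta}_jg$ with $\widetilde{\dot\Delta}_j=\dot\Delta_{j-1}+\dot\Delta_j+\dot\Delta_{j+1}$. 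Since $\Lambda^s(T_fg)=T_f(\Lambda^sg)+\sum_j[\Lambda^s,S_{j-1}f]\dot\Delta_jg$ while $f\Lambda^sg=T_f(\Lambda^sg)+T_{\Lambda^sg}f+R(f,\Lambda^sg)$, the leading part cancels and
\begin{align*}
[\Lambda^s,f]g=\underbrace{\textstyle\sum_j[\Lambda^s,S_{j-1}f]\dot\Delta_jg}_{\mathrm{I}}+\underbrace{\Lambda^s(T_gf)-T_{\Lambda^sg}f}_{\mathrm{II}}+\underbrace{\Lambda^s R(f,g)-R(f,\Lambda^sg)}_{\mathrm{III}}.
\end{align*}
Every summand in $\mathrm{I}$ is spectrally supported in an annulus $\{|\xi|\sim2^j\}$, so by the square-function theorem ($1<p<\infty$) it suffices to bound $\big\|(\sum_j|[\Lambda^s,S_{j-1}f]\dot\Delta_jg|^2)^{1/2}\big\|_{L^p}$; the same remark applies to each annulus-localized piece of $\mathrm{II}$.

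The main work is $\mathrm{I}$, the commutator with a gain of one derivative. Because $S_{j-1}f\,\dot\Delta_jg$ is spectrally localized at $|\xi|\sim2^j$, one has $\Lambda^s(S_{j-1}f\,\dot\Delta_jg)=2^{js}\phi_j(D)(S_{j-1}f\,\dot\Delta_jg)$ and $\Lambda^s\dot\Delta_jg=2^{js}\phi_j(D)\dot\Delta_jg$ for a single Fourier multiplier $\phi_j(D)$ with kernel $2^{jd}\Phi(2^j\cdot)$, $\Phi$ Schwartz, whence
\begin{align*}
[\Lambda^s,S_{j-1}f]\dot\Delta_jg(x)=2^{js}\!\int 2^{jd}\Phi(2^j(x-y))\big(S_{j-1}f(y)-S_{j-1}f(x)\big)\dot\Delta_jg(y)\,dy.
\end{align*}
Using $|S_{j-1}f(y)-S_{j-1}f(x)|\le|x-y|\int_0^1|\nabla S_{j-1}f(x+\theta(y-x))|\,d\theta$, rescaling $z=2^j(x-y)$, and bounding the shifted values of $\nabla S_{j-1}f$ (which is mollified at scale $2^{-j}$) by $C(1+|z|)^{2d}\mathcal M(\nabla f)(x)$, the Schwartz decay of $\Phi$ absorbs the polynomial factor and yields the pointwise bound $|[\Lambda^s,S_{j-1}f]\dot\Delta_jg(x)|\lesssim2^{j(s-1)}\mathcal M(\nabla f)(x)\,\mathcal M(\dot\Delta_jg)(x)$, with $\mathcal M$ the Hardy--Littlewood maximal operator. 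Inserting this into the square function, taking $\mathcal M(\nabla f)$ out by Hölder with $\tfrac1p=\tfrac1{p_3}+\tfrac1{p_4}$, and applying the Fefferman--Stein inequality together with $\big\|(\sum_j2^{2j(s-1)}|\dot\Delta_jg|^2)^{1/2}\big\|_{L^{p_4}}\simeq\|\Lambda^{s-1}g\|_{L^{p_4}}$ gives $\|\mathrm I\|_{L^p}\lesssim\|\nabla f\|_{L^{p_3}}\|\Lambda^{s-1}g\|_{L^{p_4}}$.

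For $\mathrm{II}$ and $\mathrm{III}$ there is no derivative loss on $g$. In $\mathrm{II}$ the Fourier multipliers defining $S_{j-1}$ and $2^{-js}\Lambda^sS_{j-1}$ are, uniformly in $j$, mollifiers at scale $2^{-j}$, so $|S_{j-1}g|+2^{-js}|\Lambda^sS_{j-1}g|\lesssim\mathcal Mg$ pointwise; since the relevant summands are annulus-localized at $2^j$, the square-function theorem plus Hölder with $\tfrac1p=\tfrac1{p_1}+\tfrac1{p_2}$ and the Fefferman--Stein inequality (for $\dot\Delta_jf$) give $\|\mathrm{II}\|_{L^p}\lesssim\|g\|_{L^{p_2}}\|\Lambda^sf\|_{L^{p_1}}$. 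In $\mathrm{III}=\sum_j(\Lambda^s(\dot\Delta_jf\,\widetilde{\dot\Delta}_jg)-\dot\Delta_jf\,\Lambda^s\widetilde{\dot\Delta}_jg)$ the $j$-th term has output frequency $\lesssim2^j$; organizing the sum by output frequency $2^l$ (only $l\le j+c$ contribute, $c$ a fixed constant), Bernstein's inequality produces a factor $2^{ls}$, the geometric series $\sum_{l\le j+c}2^{(l-j)s}$ converges since $s\ge1>0$, and the products are handled by the refined Hölder bound $\mathcal M(\dot\Delta_jf\,\widetilde{\dot\Delta}_jg)\le\mathcal M_r(\dot\Delta_jf)\,\mathcal M_{r'}(\widetilde{\dot\Delta}_jg)$, $\mathcal M_rh=(\mathcal M|h|^r)^{1/r}$, for any exponent $p_2'<r<p_1$ --- such an $r$ exists precisely because $\tfrac1{p_1}+\tfrac1{p_2}=\tfrac1p<1$. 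Applying the Fefferman--Stein inequality to $\mathcal M_r$ and $\mathcal M_{r'}$ and then Hölder gives $\|\mathrm{III}\|_{L^p}\lesssim\|\Lambda^sf\|_{L^{p_1}}\|g\|_{L^{p_2}}$, and summing the three contributions proves the first inequality.

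The $\mathcal L^2$-valued inequality is obtained by running the same argument for $\mathcal L^2_q$-valued $g$, with $f=f(x)$ acting as a scalar multiplier in $x$: every $|\dot\Delta_jg|$ becomes $\|\dot\Delta_jg\|_{\mathcal L^2_q}$, Minkowski's inequality in $q$ is used inside the kernel representation of $\mathrm I$, and the Littlewood--Paley and Fefferman--Stein inequalities are applied in their Hilbert-space-valued forms. The endpoints $p_2=p_3=\infty$ cause no difficulty, since then $\|\nabla S_{j-1}f\|_{L^\infty}\le\|\nabla f\|_{L^\infty}$ and $\|S_{j-1}g\|_{L^\infty_x(\mathcal L^2_q)}\le\|g\|_{L^\infty_x(\mathcal L^2_q)}$ replace the maximal-function bounds. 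I expect the genuinely delicate points to be exactly the commutator lemma in $\mathrm I$ --- establishing the pointwise gain $2^{j(s-1)}\mathcal M(\nabla f)\,\mathcal M(\dot\Delta_jg)$ and carrying the split Lebesgue exponents correctly through the vector-valued maximal estimates --- together with the choice of the auxiliary exponent $r$ in $\mathrm{III}$, which is where the hypothesis $p>1$ is actually consumed.
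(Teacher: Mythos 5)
The paper does not prove this lemma at all: it is quoted from the literature (the citation is to Moser's 1966 paper; the stated form is the classical Kato--Ponce/Kenig--Ponce--Vega commutator estimate, with the $\mathcal{L}^2$-valued variant taken from the polymeric-fluid literature), so there is no in-paper argument to compare against. Judged on its own, your proposal is the standard modern proof and its architecture is sound: the Bony splitting $[\Lambda^s,f]g=\mathrm{I}+\mathrm{II}+\mathrm{III}$ is the correct identity, the kernel representation of $[\Lambda^s,S_{j-1}f]\dot\Delta_j g$ together with the mean-value theorem and the Peetre-type shift bound does give the pointwise gain $2^{j(s-1)}\mathcal{M}(\nabla f)\,\mathcal{M}(\dot\Delta_j g)$, and the square-function plus Fefferman--Stein machinery then yields exactly the two factors $\|\nabla f\|_{L^{p_3}}\|\Lambda^{s-1}g\|_{L^{p_4}}$ and $\|\Lambda^s f\|_{L^{p_1}}\|g\|_{L^{p_2}}$; the Hilbert-space-valued extension for $\mathcal{L}^2_q$-valued $g$ with scalar $f$ is indeed automatic because all operators act in $x$ only.

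Three points deserve more care than your sketch gives them, though none is fatal. First, in $\mathrm{II}$ the multiplier $2^{-js}\Lambda^s S_{j-1}$ has symbol $|\eta|^s\chi(2\eta)$ after rescaling, which is \emph{not} smooth at the origin for non-even $s$; its kernel is not Schwartz but only decays like $|z|^{-d-s}$, which is integrable precisely because $s\geq 1>0$, so the bound $2^{-js}|\Lambda^s S_{j-1}g|\lesssim \mathcal{M}g$ survives but the phrase ``mollifier at scale $2^{-j}$'' hides the place where positivity of $s$ is used. Second, for the piece $\sum_j\Lambda^s(S_{j-1}g\,\dot\Delta_j f)$ of $\mathrm{II}$ you must still move the factor $2^{js}$ onto $\dot\Delta_j f$ (via $2^{js}|\dot\Delta_j f|\lesssim\mathcal{M}(\dot\Delta_j\Lambda^s f)$) and disentangle the product inside the maximal function, e.g.\ by the same kernel-shift estimate you used in $\mathrm{I}$; as written the one-line claim conflates $\mathcal{M}(S_{j-1}g\,\dot\Delta_j f)$ with a product of maximal functions. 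Third, in $\mathrm{III}$ the vector-valued Fefferman--Stein inequality for $\mathcal{M}_r$ in $\ell^2$ requires $r<2$ in addition to $r<p_1$, so either one checks that an admissible $r$ in $(p_2',\min(p_1,2))$ exists (it does when $p>1$, after a harmless rebalancing) or one sums the remainder in $\ell^1$ over $j$ using the convergent factor $2^{(l-j)s}$. With these details supplied, your argument gives a complete proof of the cited lemma.
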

				\section{The $L^2$ decay rate}
				
				This section is devoted to investigating the long time behaviour for the 2D micro-macro model for compressible polymeric fluids with general Hookean springs given by $(\ref{potential1})$, $(\ref{potential2})$. We blame the failure to obtaining the optimal decay rate in $L^2$ as that of \cite{Schonbek1985,Luo-Yin,Luo-Yin2} for the additional stress tensor $\tau$ which does not decay fast enough. To deal with this term, we need to use the coupling effect between $\rho$, $u$ and $g$. Different from the classical Hookean potential $\mathcal{U}=\frac{1}{2}|q|^2$, we can not obtain the optimal decay estimate directly due to the lack of the estimate for $\|(\rho,u)\|_{L^1}$. To over the difficulty, we should consider estimating some Besov-norm with negative index instead. We introduce some notations for simplicity. Denote the energy and energy dissipation functionals considered in this chapter as follows :
				\begin{align*}
					&E_0(t)=\|(\rho,u)\|^2_{H^s} + \|g\|^2_{H^{s}(\mathcal{L}^{2})} ~,\\ \notag
					&E_1(t)=\|\Lambda^1(\rho,u)\|^2_{H^{s-1}} + \|\Lambda^1 g\|^2_{H^{s-1}(\mathcal{L}^{2})}~, \\ \notag	&D_0(t)=\gamma\eta\|\nabla\rho\|^2_{H^{s-1}}+\mu\|\nabla u\|^2_{H^{s}}+(\mu+\mu')\|{\rm divu}\|^2_{H^{s}}
					+\|\nabla_q g\|^2_{H^{s}(\mathcal{L}^{2})} ~ ,\\ \notag
					&D_1(t)=\gamma\eta\|\nabla\Lambda^1\rho\|^2_{H^{s-2}}+\mu\|\nabla \Lambda^1u\|^2_{H^{s-1}}+(\mu+\mu')\|{\rm div\Lambda^1u}\|^2_{H^{s-1}}
					+\|\nabla_q \Lambda^1g\|^2_{H^{s-1}(\mathcal{L}^{2})}~.
				\end{align*}
				
				Denote the following domains involved in Fourier splitting method  :
				$$
				S(t) = \left\{\xi: |\xi|^2 \leq C_2\frac{f'(t)}{f(t)}\right\} ~~~~ \text{with}~~~~~~ f(t) = 1+t ~~\text{or}~~ f(t) = \ln^l(e+t)~,~l\in {\mathbb{N}}.~~~
				$$
				
				Denote the following energy and energy dissipation functionals involved in Fourier splitting method:
				\begin{align*}
					~H_0 = \mu\|u\|^2_{H^s} + \eta\gamma \|\rho\|^2_{H^{s-1}}~~~~~~~~\text{and}~~~~~H_1 =  \mu\|\Lambda^1 u\|^2_{H^{s-1}}+\eta\gamma\|\Lambda^1 \rho\|^2_{H^{s-2}}~.
				\end{align*}
				
				Denote two important factors $B_1$ and $B_2$ :
				$$
				B_1 = \int_0^t\|(\rho,u)\|^4_{L^2} ds ~~~~\text{and}~~~~
				B_2 = \int_{0}^{t}\int_{S(t)}|\hat{G}\cdot\bar{\hat{u}}| d\xi ds'~,
				$$
				
				where 
				$$
				~~~G=-u\cdot\nabla u+\frac{\rho}{1+\rho}\left({\rm div}\Sigma(u)+{\rm div} \tau\right)+\left[\gamma-\frac{P^{'}(1+\rho)}{1+\rho}\right]\nabla\rho.
				$$
				
				 Let's recall the following energy estimate.
				\begin{prop}\cite{decay2022}\label{prop1}
					Under the conditions of Theorem \ref{th2}, it holds for $\sigma=0~or~1$ that
					\begin{align}\label{1ineq1}
						\frac {d} {dt} E_\sigma + D_\sigma \leq 0,
					\end{align}
				\end{prop}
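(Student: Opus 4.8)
The plan is to run a standard higher‑order energy method on the perturbed system \eqref{eq1}, handling $\sigma=0$ and $\sigma=1$ in one stroke: for every $k$ with $\sigma\le k\le s$ apply $\Lambda^k$ to the three equations, pair the $\rho$‑equation with $\gamma\,\Lambda^k\rho$ (equivalently with the density‑weighted multiplier coming from $E_\lambda$, since $\|\rho\|_{H^s}\le\varepsilon$ by Theorem \ref{th1} makes weighted and unweighted norms comparable), the $u$‑equation with $(1+\rho)\Lambda^k u$, and the $g$‑equation with $\Lambda^k g\,\psi_\infty$, then integrate in $x$ (and in $q$ for $g$) and sum over $k$. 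Integrating by parts in $q$ turns $\langle\mathcal{L}\Lambda^k g,\Lambda^k g\rangle_{\mathcal{L}^2}$ into $\|\nabla_q\Lambda^k g\|_{\mathcal{L}^2}^2$, while $-\frac1{1+\rho}{\rm div}\,\Sigma(u)$ produces $\mu\|\nabla\Lambda^k u\|_{L^2}^2+(\mu+\mu')\|{\rm div}\,\Lambda^k u\|_{L^2}^2$ up to lower‑order corrections; summing over $k$ yields the $u$‑ and $g$‑parts of $D_\sigma$.

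The decisive structural point is the cancellation of the coupling terms. Testing ${\rm div}\,\tau$ against $u$ gives $-\langle\tau,\nabla u\rangle$, whereas the Fokker–Planck source $\nabla u\,q\,\nabla_q\mathcal{U}$ tested against $g\psi_\infty$ reproduces, through the definition $\tau_{ij}(g)=\int q_i\nabla_{q_j}\mathcal{U}\,g\psi_\infty\,dq$, exactly $+\langle\tau,\nabla u\rangle$, so the two exchange terms annihilate one another at every derivative order (this is the hallmark of the energetic–variational structure of \eqref{eq0}). Likewise the term ${\rm div}\,u$ in the $g$‑equation, tested against $\Lambda^k g\psi_\infty$, vanishes identically because $\int_{\mathbb{R}^2}\Lambda^k g\,\psi_\infty\,dq=\Lambda^k\!\int_{\mathbb{R}^2}g\,\psi_\infty\,dq=0$, a constraint propagated by the flow (Theorem \ref{th1}). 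What survives are genuinely nonlinear contributions — the transport terms $u\cdot\nabla(\cdot)$, the $\frac{\rho}{1+\rho}$‑type corrections, the quadratic‑in‑$g$ term from $\frac1{\psi_\infty}\nabla_q\!\cdot(\nabla u\,q\,g\psi_\infty)$ — together with commutators $[\Lambda^k,u\cdot\nabla](\cdot)$, $[\Lambda^k,\frac{\rho}{1+\rho}](\cdot)$, etc. Using the Moser‑type commutator bounds of Lemma \ref{Lemma5}, the Gagliardo–Nirenberg inequality of Lemma \ref{Lemma2}, the moment estimates of Lemmas \ref{Lemma3}–\ref{Lemma4} (which absorb the $q$‑weights in $\nabla u\,q\,\nabla_q\mathcal{U}$ and $qg$ via the potential assumptions \eqref{potential1}–\eqref{potential2}), and the embedding $H^s\hookrightarrow W^{1,\infty}$ valid since $s>1+\tfrac d2$, each such term is dominated by $\bigl(E_\sigma^{1/2}+E_0^{1/2}\bigr)D_\sigma$ plus $E_0^{1/2}$ times lower‑order dissipation, hence absorbed into the left‑hand side by the smallness $E_0,E_\sigma\le\varepsilon$.

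The only piece not yet produced is the $\gamma\eta$‑dissipation of $\nabla\rho$; for this I would append to the energy the cross terms $\eta\sum_{\sigma\le k\le s}\langle\Lambda^{k-1}u,\Lambda^{k-1}\nabla\rho\rangle$. Computing $\frac{d}{dt}\langle\Lambda^{k-1}u,\Lambda^{k-1}\nabla\rho\rangle$ from the $u$‑equation yields $-\gamma\|\Lambda^k\rho\|_{L^2}^2$ from the pressure term $\frac{P'(1+\rho)}{1+\rho}\nabla\rho$, a term $\|{\rm div}\,\Lambda^{k-1}u\|_{L^2}^2$ from $\rho_t=-{\rm div}\,u-\cdots$, viscous cross terms controlled by $\epsilon\|\Lambda^k\rho\|_{L^2}^2+C_\epsilon\|\Lambda^{k+1}u\|_{L^2}^2$, and nonlinear remainders; choosing $\eta$ and then the absorbing constants small gives the $\gamma\eta\|\nabla\rho\|_{H^{s-1}}^2$ in $D_\sigma$ while the augmented functional stays equivalent to $E_\sigma$ — and it is precisely here that the full (inhomogeneous) norm is essential, the analogous step failing in $\dot H^s$, which is the obstacle overcome in Section 4. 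Assembling all of this and renaming the dissipation constants gives $\frac{d}{dt}E_\sigma+D_\sigma\le0$. The main obstacle is organizational rather than conceptual: one must order the small parameters $(\lambda,\eta,\varepsilon)$ so that the $\rho$‑cross‑term output, the $q$‑weighted contributions, and all commutator/nonlinear errors are simultaneously swallowed by $D_\sigma$ — keeping the $\tau$–$\nabla u$ cancellation exact at each order $k$ is what makes this bookkeeping close.
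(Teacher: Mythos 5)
Your proposal is correct and follows essentially the same energy-method route that \cite{decay2022} takes; note that the paper here merely cites the proposition without reproducing a proof, so there is nothing in the present text to compare line by line. The skeleton — applying $\Lambda^k$ for $\sigma\le k\le s$ to all three equations, testing with $\gamma\Lambda^k\rho$, $\Lambda^k u$, $\Lambda^k g\psi_\infty$, exploiting (i) the $\tau$–$\nabla u$ exchange cancellation coming from the energetic–variational structure, (ii) the vanishing of $\langle{\rm div}\,u,\Lambda^k g\rangle_{\mathcal L^2}$ by the zero-mean constraint, (iii) Moser commutator estimates and Gagliardo–Nirenberg to absorb the quadratic terms by smallness, and (iv) the cross term $\eta\langle\Lambda^{k-1}u,\Lambda^{k-1}\nabla\rho\rangle$ to extract $\gamma\eta\|\nabla\rho\|^2_{H^{s-1}}$ — is exactly what the cited result requires, and your remark that the augmented functional stays equivalent to $E_\sigma$ in the full (inhomogeneous) norm while the analogous step fails in $\dot H^s$ correctly anticipates the obstacle addressed in Section 4. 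The one point worth being precise about, if you were to flesh this out, is the sign bookkeeping in the exchange: $-\langle\Lambda^k{\rm div}\,\tau,\Lambda^k u\rangle$ from the $u$-equation equals $+\langle\Lambda^k\tau,\Lambda^k\nabla u\rangle$, and this must be met by the opposite-signed contribution $\mp\langle\Lambda^k\nabla u,\Lambda^k\tau\rangle$ from testing the $\nabla u\,q\nabla_q\mathcal U$ term against $\Lambda^k g\psi_\infty$ — the identity $\langle\nabla u\,q\nabla_q\mathcal U,g\rangle_{\mathcal L^2}=\langle\nabla u,\tau(g)\rangle$ being exact because $q\nabla_q\mathcal U$ is $x$-independent, so the cancellation holds at every order $k$ as you claim.
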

				Let's recall the key lemma of time decay estimate as follows.
				\begin{lemm}\cite{decay2022}\label{1lem1}
					Let $d=2$. Assume $(\rho,u,g)$ be a global strong soluition of system \eqref{eq1} with initial data $(\rho_0,u_0,g_0)$ under the conditions in Theorem  \ref{th2} . Then there exists a positive time $T_0$ such that 
					\begin{align}\label{2ineq1}
						\int_{S(t)}\left(|\hat{\rho}|^2+|\hat{u}|^2+\|\hat{g}\|^2_{\mathcal{L}^2}\right)d\xi
						&\lesssim \frac{f'(t)}{f(t)}\left(1+\|(\rho_0,u_0)\|^2_{\dot{B}^{-\frac d 2}_{2,\infty}}+\|g_0\|^2_{\dot{B}^{-\frac d 2}_{2,\infty}(\mathcal{L}^2)}\right) \\ \notag
						&+ \frac{f'(t)}{f(t)}B_1 + B_2 ~,
					\end{align}
					for any $t>T_0$.
				\end{lemm}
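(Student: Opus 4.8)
The plan is to prove \eqref{2ineq1} by a frequency-localized energy argument carried out on the Fourier side. Take the Fourier transform of \eqref{eq1} in $x$; for each frozen frequency $\xi$ I would introduce a Lyapunov functional of Matsumura--Nishida type,
$$
\mathcal{E}(\xi,t)=\gamma|\hat\rho|^{2}+|\hat u|^{2}+\|\hat g\|_{\mathcal{L}^{2}}^{2}
+\frac{\eta_{0}}{1+|\xi|^{2}}\,\mathrm{Re}\bigl(i\xi\cdot\hat u\,\overline{\hat\rho}\bigr),
$$
which for $\eta_{0}$ small is equivalent to $|\hat\rho|^{2}+|\hat u|^{2}+\|\hat g\|_{\mathcal{L}^{2}}^{2}$. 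Differentiating in time and using the structure of \eqref{eq1}: the $\gamma$-weight makes the pressure term in the momentum equation cancel the divergence term in the density equation; the time derivative of the cross term produces the otherwise missing density dissipation $\eta_{0}\gamma|\xi|^{2}|\hat\rho|^{2}/(1+|\xi|^{2})$ at the price of $\eta_{0}|\xi\cdot\hat u|^{2}/(1+|\xi|^{2})$, controlled by the viscosity; the Fokker--Planck operator gives $2\|\nabla_{q}\hat g\|_{\mathcal{L}^{2}}^{2}\gtrsim\|\hat g\|_{\mathcal{L}^{2}}^{2}$ by the Poincaré-type inequality of Lemma \ref{Lemma4}, a dissipation of order one in $\xi$; and the linear micro--macro coupling ($\mathrm{div}\,\tau(g)$ in the $u$-equation against $\mathrm{div}\,u+\nabla u\,q\,\nabla_{q}\mathcal{U}$ in the $g$-equation) is reorganized by an integration by parts in $q$, using $\nabla_{q}\psi_{\infty}=-\nabla_{q}\mathcal{U}\,\psi_{\infty}$ and $\int_{\mathbb{R}^{2}}\hat g\,\psi_{\infty}\,dq=0$, so that the $\mathrm{div}\,u$ part drops and the rest either cancels or is absorbed by the Fokker--Planck and $\langle q\rangle$-weighted dissipations together with the finiteness of $\int|q|^{2}\psi_{\infty}\,dq$ from \eqref{potential1}. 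Keeping only the genuinely nonlinear forcings on the right one arrives at a pointwise-in-$\xi$ inequality
$$
\tfrac{d}{dt}\mathcal{E}(\xi,t)+c_{0}\Bigl(|\xi|^{2}\bigl(|\hat\rho|^{2}+|\hat u|^{2}\bigr)+\|\hat g\|_{\mathcal{L}^{2}}^{2}\Bigr)
\lesssim\bigl|\hat G\cdot\overline{\hat u}\bigr|+|\xi|\,\bigl|\widehat{\rho u}\bigr|\bigl(|\hat\rho|+|\hat u|\bigr)+\bigl|\langle\widehat{\mathcal{N}},\hat g\rangle_{\mathcal{L}^{2}}\bigr|,
$$
where $\mathcal{N}$ collects the nonlinear sources of the $g$-equation.

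Next I would integrate this inequality in $t$ on $[0,t]$ and then integrate the outcome over $\xi\in S(t)$. Since $f'/f$ is nonincreasing one has $S(t)\subseteq S(s)$ for $s\le t$, so all time integrals stay consistent with the region $S(t)$; also $|S(t)|\lesssim f'/f$ and $\int_{S(t)}|\xi|^{2}\,d\xi\lesssim(f'/f)^{2}$ since $d=2$. The initial-data contribution is handled by Littlewood--Paley: $\int_{S(t)}|\hat w_{0}|^{2}\,d\xi=\sum_{2^{j}\lesssim(f'/f)^{1/2}}\|\dot\Delta_{j}w_{0}\|_{L^{2}}^{2}\lesssim\|w_{0}\|_{\dot B^{-1}_{2,\infty}}^{2}\sum_{2^{j}\lesssim(f'/f)^{1/2}}2^{2j}\lesssim(f'/f)\|w_{0}\|_{\dot B^{-1}_{2,\infty}}^{2}$ for $w_{0}=(\rho_{0},u_{0})$ and likewise for $g_{0}$ in $\mathcal{L}^{2}$, giving the first term on the right of \eqref{2ineq1}; for the $g$-block one may alternatively use the order-one Fokker--Planck damping, so that its initial contribution decays like $e^{-ct}\|g_{0}\|_{L^{2}(\mathcal{L}^{2})}\le e^{-ct}\varepsilon$. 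The term $\int_{0}^{t}\int_{S(t)}|\hat G\cdot\overline{\hat u}|\,d\xi\,ds'$ is, by definition, exactly $B_{2}$. For the remaining nonlinear terms I would use the elementary bounds $\|\widehat{\rho u}\|_{L^{\infty}_{\xi}}\le\|\rho u\|_{L^{1}_{x}}\le\|\rho\|_{L^{2}}\|u\|_{L^{2}}$ (and analogous $L^{1}_{x}\to L^{\infty}_{\xi}$ bounds for the $g$-sources, with Lemmas \ref{Lemma3} and \ref{Lemma4} to trade $q$- and $\nabla_{q}\mathcal{U}$-weights for $\nabla_{q}g$), combine them with the volume estimates above and with Cauchy--Schwarz in time against the a priori bound $\int_{0}^{\infty}D_{0}(s)\,ds\lesssim\varepsilon$ coming from Proposition \ref{prop1} and Theorem \ref{th1}; in $d=2$ this produces the factor $f'(t)/f(t)$ and leaves precisely $\tfrac{f'}{f}B_{1}$, plus remainders of size $\tfrac{f'}{f}\varepsilon^{2}$. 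All the $e^{-ct}$-type remainders (from the Fokker--Planck damping and from $\sup_{t}E_{\lambda}(t)\le\varepsilon$) are $\lesssim f'(t)/f(t)$ once $t$ exceeds some $T_{0}$; together with the $\varepsilon^{2}$-terms this is what the harmless constant $1$ on the right of \eqref{2ineq1} absorbs. Collecting everything yields \eqref{2ineq1} for $t>T_{0}$.

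The step I expect to be the main obstacle is the nonlinear bookkeeping that makes the right-hand side collapse into exactly $\tfrac{f'}{f}B_{1}$ and $B_{2}$: for each nonlinear term one must decide whether to integrate by parts in $x$ (gaining a factor $|\xi|$, useful inside $S(t)$ but costing a derivative on the quadratic part) or to leave a derivative on a factor already present in the dissipation $D_{0}$, and then distribute the resulting $L^{2}_{t}L^{2}_{x}$ norms so that only $\int_{0}^{t}\|(\rho,u)\|_{L^{2}}^{4}\,ds$ and genuinely bounded quantities survive. A secondary subtlety is keeping the viscous and micro--macro cross terms truly absorbed even when $\mu$ and $\lambda$ are not large: the Young-inequality splittings must be ordered correctly (first the Fokker--Planck/$\langle q\rangle$-weighted dissipation budget, then the viscous one), which is precisely why the weighted dissipation $\lambda\|\langle q\rangle\nabla_{q}g\|^{2}$ is built into $D_{\lambda}$ and why the structural assumptions \eqref{potential1}--\eqref{potential2} on $\mathcal{U}$ are needed.
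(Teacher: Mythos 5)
Your plan is the right one: the lemma is proved on the Fourier side with a Matsumura--Nishida type Lyapunov functional carrying a small cross term $\frac{\eta_0}{1+|\xi|^2}\mathrm{Re}\bigl(i\xi\cdot\hat u\,\overline{\hat\rho}\bigr)$, differentiated in $t$, integrated over $[0,t]$, and then restricted to $S(t)$, with the Littlewood--Paley reading of $\int_{S(t)}|\hat w_0|^2\,d\xi\lesssim\frac{f'}{f}\|w_0\|_{\dot B^{-1}_{2,\infty}}^2$ handling the initial data. Be aware that the paper itself does not supply an argument here --- Lemma~\ref{1lem1} is imported verbatim from \cite{decay2022} --- so the comparison is with what that proof must be, and on that score your outline is essentially the expected one. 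Two points of bookkeeping deserve tightening. First, the linear micro--macro coupling does not need to be ``absorbed'': the $\mathrm{div}\,u$ source of the $g$-equation pairs to zero with $\hat g$ thanks to $\int g\,\psi_\infty\,dq=0$, and the remaining $\nabla u\,q\nabla_q\mathcal{U}$ source, after the $q$-integration by parts, produces exactly $-\mathrm{Re}\,\widehat{\mathrm{div}\,\tau}\cdot\overline{\hat u}$, which cancels the $\mathrm{div}\,\tau$ contribution from the $u$-equation; nothing is left to be absorbed by the $\langle q\rangle$-weighted dissipation at the linear level. Second, the term $\tfrac{f'}{f}B_1$ is not produced by a time Cauchy--Schwarz against $\int_0^\infty D_0\,ds$ but by Young's inequality on the $\rho$-source: writing the mass equation as $\rho_t+\mathrm{div}\,u=\hat F$ with $F=-\mathrm{div}(\rho u)$, one has $|\hat F|\le|\xi|\,|\widehat{\rho u}|$, hence
$$
|\hat F\cdot\overline{\hat\rho}|\le\frac{\epsilon}{2}|\xi|^2|\hat\rho|^2+\frac{1}{2\epsilon}|\widehat{\rho u}|^2,
$$
and after absorbing the first piece into the $|\xi|^2|\hat\rho|^2$ dissipation already created by the cross term,
$$
\int_0^t\!\!\int_{S(t)}|\widehat{\rho u}|^2\,d\xi\,ds\le|S(t)|\int_0^t\|\rho u\|_{L^1}^2\,ds\lesssim\frac{f'}{f}\int_0^t\|(\rho,u)\|_{L^2}^4\,ds=\frac{f'}{f}B_1,
$$
using $|S(t)|\lesssim f'/f$ in $d=2$. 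The Cauchy--Schwarz-against-$D_0$ device you mention is what handles the \emph{remaining} quadratic sources (the $g$-nonlinearities and the cross-term forcings), which only need to produce $\tfrac{f'}{f}\,O(\varepsilon^2)$ and are then swallowed by the constant $1$ on the right of \eqref{2ineq1}. With these two adjustments your argument closes in the stated form.
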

				Firstly we can prove the logarithmic decay rate with $d=2$.
				\begin{prop}\label{prop2}
					Under the conditions of Theorem \ref{th2}, then for any $l\in \mathbb{N}^{+}$, there exists a positive constant $C$ such that
					\begin{align}\label{3ineq1}
						E_0 + (e+t)E_1 \leq C\ln^{-l}(e+t).
					\end{align}
				\end{prop}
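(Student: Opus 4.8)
The plan is to prove the logarithmic decay \eqref{3ineq1} via a Fourier splitting argument applied to the two coupled energy estimates from Proposition \ref{prop1}, bootstrapping from a crude bound up to an arbitrary logarithmic rate. Throughout I choose the splitting function $f(t)=\ln^l(e+t)$, so that $f'(t)/f(t)=l/((e+t)\ln(e+t))$, and I set $S(t)=\{\xi:|\xi|^2\le C_2 f'(t)/f(t)\}$. The starting point is that, by Theorem \ref{th1}, $E_0(t)$ and $E_1(t)$ are bounded uniformly in $t$ and $\int_0^\infty D_\sigma\,dt\lesssim\varepsilon$; in particular $B_1=\int_0^t\|(\rho,u)\|_{L^2}^4\,ds$ grows at most linearly and, after the first decay bound is obtained, will be shown to be bounded.

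First I would control $B_2$. Writing $G$ as in the statement, each term of $G$ is either quadratic in $(\rho,u,g)$ or of the form (small density factor)$\times$(second-order derivative), so $\|\hat G\|_{L^\infty_\xi}\lesssim\|G\|_{L^1}\lesssim\varepsilon\,(D_0(t))^{1/2}+\|(\rho,u)\|_{L^2}\|\nabla(\rho,u)\|_{L^2}$ type bounds hold after using the smallness of $\|\rho\|_{L^\infty}$ and Sobolev embedding; hence $\int_{S(t)}|\hat G\cdot\bar{\hat u}|\,d\xi\lesssim |S(t)|^{1/2}\|\hat G\|_{L^\infty_\xi}\|u\|_{L^2}\lesssim (f'/f)^{1/2}(\cdots)$, and a Cauchy--Schwarz / Young split against $D_0$ shows $B_2\lesssim \varepsilon+\int_0^t\frac{f'(s)}{f(s)}E_0(s)\,ds$ up to an absorbable dissipative remainder. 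Plugging this and Lemma \ref{1lem1} into the energy identity, and using that the dissipation $D_0$ controls $\int_{S(t)^c}|\xi|^2(|\hat\rho|^2+\dots)\,d\xi\gtrsim \frac{f'}{f}\int_{S(t)^c}(|\hat\rho|^2+\dots)\,d\xi$ (after noting $D_0$ also dominates $\|\nabla_q g\|_{\mathcal L^2}^2\gtrsim\|g\|_{\mathcal L^2}^2$ by Lemma \ref{Lemma4}, so the full triple $(\rho,u,g)$ is damped outside $S(t)$ — the low-frequency part of $\rho$ being handled through the $\eta\langle\Lambda^{s-1}u,\Lambda^s\rho\rangle$-type cross term already built into $D_0$), I arrive at a differential inequality of Grönwall type
\begin{align*}
\frac{d}{dt}E_0+c\frac{f'(t)}{f(t)}E_0\lesssim \frac{f'(t)}{f(t)}\Big(1+\|(\rho_0,u_0)\|_{\dot B^{-1}_{2,\infty}}^2+\|g_0\|_{\dot B^{-1}_{2,\infty}(\mathcal L^2)}^2+B_1\Big).
\end{align*}

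Multiplying by the integrating factor $f(t)^c=\ln^{cl}(e+t)$ and integrating, one gets $E_0(t)\lesssim \ln^{-cl}(e+t)\big(1+\int_0^t \ln^{cl-1}(e+s)\,\frac{l}{(e+s)}\,(1+B_1(s))\,ds\big)$. If $B_1$ is merely known to be finite a priori one already obtains $E_0(t)\lesssim \ln^{-cl}(e+t)\cdot\ln^{cl}(e+t)\cdot(\text{slowly growing})$, which is weaker than claimed; the point of the argument is to iterate: a first pass with the trivial bound $B_1(t)\lesssim \varepsilon t$ yields $E_0(t)\lesssim \ln^{-1}(e+t)$ (choosing $cl\ge 2$ say), hence $\|(\rho,u)\|_{L^2}^4\lesssim \ln^{-2}(e+t)$, so $B_1(\infty)<\infty$; then a second pass with $B_1$ bounded gives $E_0(t)\lesssim \ln^{-cl}(e+t)$ for the chosen $l$, and since $l$ is arbitrary this is the stated rate. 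The weighted piece $(e+t)E_1$ is handled in parallel: from $\frac{d}{dt}E_1+D_1\le0$ and the $S(t)$-estimate applied at the level of one derivative, together with the observation that $E_1\lesssim D_0$ (one-derivative energy is controlled by zero-order dissipation) one derives $\frac{d}{dt}\big((e+t)E_1\big)=E_1+(e+t)\frac{d}{dt}E_1\le E_1-(e+t)D_1+(e+t)\frac{f'}{f}(\cdots)$, and absorbing $E_1$ into $D_0$ from the $E_0$-inequality (i.e. running the two estimates simultaneously as $\frac{d}{dt}(E_0+(e+t)E_1)+\text{dissipation}\le \text{source}$) closes the loop with the same logarithmic source term.

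The main obstacle I anticipate is the low-frequency behavior of the density $\rho$: the linearized system for $(\rho,u)$ is of hyperbolic-parabolic (partially dissipative) type, so $D_0$ does not directly contain $\|\rho\|_{L^2}$ near $\xi=0$, and the Fourier-splitting damping of $\rho$ on $S(t)^c$ must be extracted from the mixed term $\eta\gamma\|\nabla\rho\|^2$ combined with the cross product $\langle\Lambda^{s-1}u,\Lambda^s\rho\rangle$ hidden in the definition of $E_\sigma,D_\sigma$ — one must check that for $|\xi|^2\gtrsim f'/f$ this genuinely gives $\gtrsim\frac{f'}{f}|\hat\rho|^2$ with a uniform constant, which is exactly the role of choosing $C_2$ large in $S(t)$. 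A secondary technical point is making the estimate of $B_2$ truly closed, i.e. ensuring every term produced by $\hat G$ is either quadratic (hence $\lesssim E_0\cdot\frac{f'}{f}$, absorbable after the iteration) or carries a genuine dissipative gain so that it can be absorbed into $\int_0^t D_0$; the smallness of $\varepsilon$ from Theorem \ref{th1} is what makes all such absorptions legitimate. Once these two points are in place, the integrating-factor computation and the single bootstrap on $B_1$ finish the proof.
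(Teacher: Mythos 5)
Your general strategy (Fourier splitting with $f=\ln^l(e+t)$, feed the low--frequency estimate into the weighted energy identity, bootstrap) is the right framework, but the bootstrap itself is set up incorrectly and as written cannot give the arbitrary logarithmic rate.

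The central error is the claim that a first pass yields $B_1(\infty)<\infty$. If $E_0(t)\lesssim\ln^{-\alpha}(e+t)$ for some $\alpha>0$ (and a first Fourier--splitting pass can only produce such a purely logarithmic rate), then $\|(\rho,u)\|_{L^2}^4\lesssim\ln^{-2\alpha}(e+t)$, and $\int_0^\infty\ln^{-2\alpha}(e+s)\,ds=\infty$ for \emph{every} $\alpha$; so $B_1$ is never bounded at this stage, and your ``second pass with $B_1$ bounded'' never becomes available. Moreover, the first pass does not give $\ln^{-1}$: using only $E_0\le\varepsilon$ and $\int_0^\infty D_0\,dt\le\varepsilon$, Cauchy--Schwarz gives $\int_0^t\|G\|_{L^1}\|u\|_{L^2}\,ds\lesssim t^{1/2}$, so $B_2\lesssim (f'/f)^{1/2}t^{1/2}\sim\ln^{-1/2}(e+t)$ is the bottleneck (not $\tfrac{f'}{f}B_1$), and one only obtains $E_0\lesssim\ln^{-1/2}(e+t)$. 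More generally, under the hypothesis $E_0\lesssim\ln^{-l/2}(e+t)$ one finds $B_2\lesssim(f'/f)^{1/2}\big(\int_0^t\ln^{-l}(e+s)\,ds\big)^{1/2}\sim\ln^{-(l+1)/2}(e+t)$, i.e.\ each pass gains exactly half a power of $\ln$. That is why the paper runs an \emph{induction of unbounded length} (taking $f=\ln^{l+3}(e+t)$ at step $l$ and using $\int_0^t\ln^{-l}(e+s)\,ds\lesssim(e+t)\ln^{-l}(e+t)$), reaching $\ln^{-l}$ only after $O(l)$ iterations; a two--pass bootstrap cannot close.

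Two smaller points. First, the Grönwall form $\tfrac{d}{dt}E_0+c\tfrac{f'}{f}E_0\lesssim\tfrac{f'}{f}(1+B_1)$ you write is not what Schonbek's strategy delivers: the damping you extract from $D_0$ on $S(t)^c$ covers $H_0$ (which omits $\|\Lambda^s\rho\|_{L^2}^2$), so after multiplying by $f$ one must carry the extra source $f'(t)\|\rho\|_{\dot H^s}^2$ (controlled by the integrability of $D_0$), exactly as in \eqref{3ineq3}; also the dominant source is $f'B_2$, not $\tfrac{f'}{f}(1+B_1)$, and your proposed bound $B_2\lesssim\varepsilon+\int_0^t\tfrac{f'}{f}E_0\,ds$ is not what the Cauchy--Schwarz/Young computation produces — one gets $B_2\lesssim(f'/f)^{1/2}\big(B_1+\int_0^tD_0\,ds\big)$. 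Second, for the weighted term $(e+t)E_1$ the mechanism in the paper is to first upgrade to the \emph{weighted integrability} $\int_0^t\ln^k(e+s)D_0\,ds\le C$ (by multiplying the $E_0$-inequality by $\ln^k(e+t)$), and then combine $E_1\lesssim D_0$ with a $(e+t)\ln^l(e+t)$-weight on the $\sigma=1$ estimate \eqref{1ineq1}; your sketch of ``running the two estimates simultaneously'' gestures at this but would need this intermediate integrability step to close.
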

				\begin{proof}
					Taking $\sigma=0$ in \eqref{1ineq1}, we have
					\begin{align}\label{3ineq2}
						\frac {d} {dt} E_0(t) + D_0(t) \leq 0.
					\end{align}
					Define $S_0(t)=\left\{\xi:|\xi|^2\leq C_2\frac{f'(t)}{f(t)}\right\}$ with $f(t)=\ln^3(e+t)$ and $C_2$ large enough. Applying Schonbek's strategy to \eqref{3ineq2} as that of \cite{Schonbekstra} leads to
					\begin{align}\label{3ineq3}
						\frac {d} {dt} [f(t)E_0(t)] &+C_2f'(t)H_0(t)+f(t)\| g\|^2_{H^{s}(\mathcal{L}^{2})} \\ \notag
						&\lesssim f'(t)\int_{S_0(t)}|\hat{u}|^2+|\hat{\rho}|^2d\xi + f'(t)\|\rho\|^2_{\dot{H}^s}.
					\end{align}
					It follows from Lemma \ref{1lem1} that
					\begin{align}\label{3ineq4}
						\int_{S_0(t)}|\hat{\rho}|^2+|\hat{u}|^2d\xi
						&\lesssim \ln^{-\frac{1}{2}}(e+t) + B_2 \\ \notag
						&\lesssim \ln^{-\frac{1}{2}}(e+t) + \left(\frac{f^{'}(t)}{f(t)}\right)^{\frac{1}{2}}\int_0^t\|G\|_{L^1}\|u\|_{L^2}ds\\ \notag
						&\lesssim \ln^{-\frac{1}{2}}(e+t) + \left(\frac{f^{'}(t)}{f(t)}\right)^{\frac{1}{2}}\int_0^t\|(\rho,u)\|^2_{L^2}\|\nabla(\rho,u,\tau)\|_{H^1}ds\\ \notag
						&\lesssim \ln^{-\frac{1}{2}}(e+t).
					\end{align}
					According to \eqref{3ineq3} and \eqref{3ineq4}, we deduce that
					\begin{align}\label{3ineq5}
						\frac {d} {dt} [\ln^3(e+t)E_0(t)] \lesssim \frac{\ln^{\frac{3}{2}}(e+t)}{e+t} + \|\rho\|^2_{\dot{H}^s},
					\end{align}
					which implies
					\begin{align}\label{3ineq6}
						E_0(t) \lesssim \ln^{-\frac{1}{2}}(e+t).
					\end{align}
					Nextly, we improve the above decay rate by applying inductive argument. Define $S(t)=\left\{\xi:|\xi|^2\leq C_2\frac{f'(t)}{f(t)}\right\}$ with $f(t)=\ln^{l+3}(e+t)$ and $C_2$ large enough, then we temporarily assume
					\begin{align}\label{3ineq7}
						E_0(t) \lesssim \ln^{-\frac{l}{2}}(e+t).
					\end{align}
				   According to \eqref{3ineq7}, we obtain
					\begin{align}\label{3ineq8}
						B_2=\int_0^t\int_{S_0(t)}|\hat{G}||\hat{u}| d\xi ds &\lesssim \left(\frac{f'(t)}{f(t)}\right)^{\frac{1}{2}}\int_0^t\|G\|_{L^1}\|u\|_{L^2}ds\\ \notag
						&\lesssim \left(\frac{f'(t)}{f(t)}\right)^{\frac{1}{2}}\left(\int_0^t\ln^{-l}(e+s)ds\right)^{\frac{1}{2}}\\ \notag
						& \lesssim
						\ln^{-\frac{l+1}{2}}(e+t),
					\end{align}
					where we have used the fact that 
					\begin{align}\label{3ineq9}
						\lim_{t\rightarrow\infty}\frac{\int_0^t\ln^{-l}(e+s)ds}{(e+t)\ln^{-l}(e+t)}<+\infty,
					\end{align}
					for any $l\in \mathbb{N}^{+}$. Thus it follows from \eqref{3ineq8} that
					\begin{align}\label{3ineq10}
						\frac {d} {dt} [\ln^{l+2}(e+t)E_0(t)] \lesssim \frac{\ln^{\frac{l+3}{2}}(e+t)}{e+t} + \|\rho\|^2_{\dot{H}^s},
					\end{align}
					which implies
					\begin{align}\label{3ineq11}
						E(t) \lesssim \ln^{-\frac{l+1}{2}}(e+t).
					\end{align}
					By virtue of the inductive argument, we finally deduce that
					\begin{align}\label{3ineq12}
						E_0 \lesssim \ln^{-l}(e+t),
					\end{align}
				for any $l \in \mathbb{N}^{+}$. Further, multiplying \eqref{3ineq2} by $\ln^k(e+t)$, we obtain
							\begin{align}\label{3ineq13}
					\frac {d} {dt} [\ln^k(e+t)E_0(t)] + \ln^l(e+t)D_0(t) &\lesssim \frac{\ln^{k-1}(e+t)}{e+t}E_0(t)\\ \notag
					&\lesssim \frac{\ln^{-2}(e+t)}{e+t},
				\end{align}
			which implies the following enhanced integrability that
			\begin{align}\label{3ineq14}
			\int_0^t\ln^k(e+s)D_0ds \leq C,
		\end{align}
	for any $k \in \mathbb{N}^{+}$. Multiplying \eqref{1ineq1} by $(e+t)\ln^l(e+t)$ and taking $\sigma=1$, we infer that
					\begin{align}\label{3ineq15}
						\frac {d} {dt} [(e+t)\ln^{l}(e+t)E_1(t)] 
						&\lesssim \ln^{l}(e+t)E_1(t) \\ \notag
						&\lesssim \ln^{l}(e+t)D_0(t),
					\end{align}
					which implies by using \eqref{3ineq14} that 
					\begin{align}\label{3ineq16}
						(e+t)E_1 \lesssim \ln^{-l}(e+t).
					\end{align}
					We thus complete the proof of Proposition \ref{prop2}.
				\end{proof}
				By virtue of Proposition \ref{prop2}, we are going to prove the initial algebraic time decay rate.
				\begin{prop}\label{prop3}
					Under the condition in Theorem \ref{th2}, there exists a positive constant $C$ such that
					\begin{align}\label{4ineq0}
						E_0 + (1+t)E_1 \leq C(1+t)^{-\frac{1}{2}}.
					\end{align}
				\end{prop}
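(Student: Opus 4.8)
The plan is to run the Fourier splitting method simultaneously on the zeroth‑ and first‑order energy inequalities of Proposition~\ref{prop1}, with the \emph{linear} time weight $f(t)=1+t$, to feed the low‑frequency part into Lemma~\ref{1lem1}, and to close the resulting differential inequality by a continuity/bootstrap argument in which the logarithmic decay and the enhanced dissipation integrability of Proposition~\ref{prop2} are the essential inputs. Throughout I set $S(t)=\{\xi:|\xi|^2\le C_2/(1+t)\}$ with $C_2$ large.

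For $E_0$: starting from $\tfrac{d}{dt}E_0+D_0\le0$ and multiplying by $1+t$, I would split the dissipation across $S(t)$. On $S(t)^c$ one has $|\xi|^2\ge C_2/(1+t)$, so $(1+t)D_0$ dominates $C_2$ times the low‑frequency mass of $(\rho,u)$ minus its $S(t)$‑part; the top‑order piece $\|\Lambda^s\rho\|^2_{L^2}$ is already absorbed by $(1+t)\gamma\eta\|\nabla\rho\|^2_{H^{s-1}}$, and the whole $g$‑term $\|g\|^2_{H^s(\mathcal L^2)}$ by $\|\nabla_qg\|^2_{H^s(\mathcal L^2)}$ via the Poincar\'e inequality in $q$ (recall $\int g\psi_\infty\,dq=0$). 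Choosing $C_2$ large to absorb the constant in $\tfrac{d}{dt}[(1+t)E_0]=E_0+(1+t)\tfrac{d}{dt}E_0$, this gives $\tfrac{d}{dt}[(1+t)E_0]\lesssim\int_{S(t)}(|\hat\rho|^2+|\hat u|^2+\|\hat g\|^2_{\mathcal L^2})\,d\xi$. Lemma~\ref{1lem1} with $f=1+t$ bounds the right‑hand side by $\tfrac1{1+t}(1+\|(\rho_0,u_0)\|^2_{\dot B^{-1}_{2,\infty}}+\|g_0\|^2_{\dot B^{-1}_{2,\infty}(\mathcal L^2)})+\tfrac1{1+t}B_1+B_2$, the initial‑data terms being finite by the hypotheses of Theorem~\ref{th2}. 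Under the a priori assumption $E_0(s)\le M(1+s)^{-1/2}$ (to be closed), the global smallness $E_0\le\varepsilon$ gives $B_1\le\varepsilon\int_0^tE_0\,ds\lesssim\varepsilon M(1+t)^{1/2}$, hence $\tfrac1{1+t}B_1\lesssim\varepsilon M(1+t)^{-1/2}$.

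The real work is $B_2\lesssim|S(t)|^{1/2}\int_0^t\|G\|_{L^1}\|u\|_{L^2}\,ds\lesssim(1+t)^{-1/2}\int_0^t\|G\|_{L^1}\|u\|_{L^2}\,ds$. Since $G$ is quadratic, $\|G\|_{L^1}\lesssim\|(\rho,u)\|_{L^2}\|\nabla(\rho,u,\tau)\|_{H^1}\lesssim E_0^{1/2}D_0^{1/2}$, where Lemmas~\ref{Lemma3}--\ref{Lemma4} and the Poincar\'e inequality in $q$ handle the $\tau$‑terms, so $\|G\|_{L^1}\|u\|_{L^2}\lesssim E_0D_0^{1/2}$. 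I would then bound $\int_0^tE_0D_0^{1/2}\,ds$ uniformly in $t$ by a weighted Cauchy--Schwarz, $\int_0^tE_0D_0^{1/2}\,ds\le(\int_0^t\ln^{2k}(e+s)D_0\,ds)^{1/2}(\int_0^t\ln^{-2k}(e+s)E_0^2\,ds)^{1/2}$, the first factor being $\lesssim1$ by the enhanced integrability of Proposition~\ref{prop2} and the second being finite because the a priori power decay makes $\ln^{-2k}(e+s)E_0^2\lesssim M^2(1+s)^{-1}\ln^{-2k}(e+s)$ integrable; the quadratic structure of $G$ together with $\varepsilon$ provides the small prefactor needed. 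This yields $B_2\lesssim M(1+t)^{-1/2}$, and integrating in time $(1+t)E_0\lesssim1+\ln(1+t)+\varepsilon M(1+t)^{1/2}+M(1+t)^{1/2}$, so $E_0\le C_0(1+t)^{-1/2}+(\text{small})\,M(1+t)^{-1/2}$; taking $\varepsilon$ small and $M>2C_0$ closes the assumption and gives $E_0\lesssim(1+t)^{-1/2}$. The bound for $E_1$ follows by the same scheme applied to $\tfrac{d}{dt}E_1+D_1\le0$ with the heavier weight $(1+t)^{3/2}$: since $(1+t)D_1$ controls $E_1$ off $S(t)$ and $\int_{S(t)}|\xi|^2(\cdots)\,d\xi\lesssim(1+t)^{-1}\int_{S(t)}(\cdots)\,d\xi$, one gets $\tfrac{d}{dt}[(1+t)^{3/2}E_1]\lesssim(1+t)^{-1/2}\int_{S(t)}(\cdots)\,d\xi$, and reusing the estimate of $\int_{S(t)}(\cdots)\,d\xi$ from the previous step (now with $E_0\lesssim(1+t)^{-1/2}$ available) and integrating gives $(1+t)^{3/2}E_1\lesssim(1+t)$, i.e. $(1+t)E_1\lesssim(1+t)^{-1/2}$.

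The main obstacle is the control of $B_2$. Unlike the incompressible case there is no $L^1$‑bound on $(\rho,u)$, so the low‑frequency part of $u$ is not directly small, and one must squeeze decay out of $\int_0^t\|G\|_{L^1}\|u\|_{L^2}\,ds$ using only the merely logarithmic decay and the weighted integrability of $D_0$ already in hand. That integral is at best uniformly bounded, and the weight $1+t$ is exactly calibrated so that the surviving contribution $(1+t)^{-1/2}$ — precisely half the optimal $L^2$ rate for $u$ — is what one obtains; a heavier weight would break the estimate, which is why only this "half" rate is reachable at this stage.
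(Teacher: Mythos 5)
Your overall strategy (Fourier splitting with $f(t)=1+t$, Lemma~\ref{1lem1} for the low‑frequency mass, Proposition~\ref{prop2}'s logarithmic decay and enhanced dissipation integrability as inputs) is the same as the paper's, but the way you close the resulting inequality contains a gap that I do not see how to repair without essentially switching to the paper's mechanism.

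The problem is in the treatment of $B_2$. After Cauchy--Schwarz you write
$\int_0^t E_0 D_0^{1/2}\,ds\le\bigl(\int_0^t\ln^{2k}(e+s)D_0\,ds\bigr)^{1/2}\bigl(\int_0^t\ln^{-2k}(e+s)E_0^2\,ds\bigr)^{1/2}$,
the first factor being $\le C_k$ by \eqref{3ineq14} and the second $\lesssim M$ under the a priori assumption $E_0\le M(1+s)^{-1/2}$. This yields $B_2\lesssim C_k M(1+t)^{-1/2}$ and after integration in time
$(1+t)E_0\lesssim 1+\ln(1+t)+\varepsilon M(1+t)^{1/2}+C_k M(1+t)^{1/2}$,
so that $E_0\lesssim C_0(1+t)^{-1/2}+C_k M(1+t)^{-1/2}$. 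For the bootstrap to close you need the coefficient of $M$ to be small, but $C_k$ comes from \eqref{3ineq14}, whose proof uses Proposition~\ref{prop2} with $l=k+1$ and whose constant is not tracked in $k$; there is no reason for $C_k$ to be small, and the smallness from $E_0\le\varepsilon$ cannot be extracted here because splitting off $E_0^\theta\le\varepsilon^\theta$ with $\theta>0$ makes $\int\ln^{-2k}(e+s)E_0^{2-2\theta}\,ds$ diverge (it behaves like $\int(1+s)^{-(1-\theta)}\ln^{-2k}\,ds$). Your phrase ``the quadratic structure of $G$ together with $\varepsilon$ provides the small prefactor needed'' is exactly the point that would need to be proved, and as written the $\varepsilon$ does not propagate to the $B_2$ term.

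The paper avoids this entirely by not applying Cauchy--Schwarz to extract $\sup M$ outside the integral. Instead it multiplies by $(1+t)^2$, sets $M(t)=\sup_{s\le t}(1+s)^{1/2}E_0(s)$, and keeps $M(s)$ under the time integral to obtain
$M(t)\lesssim 1+\int_0^t\frac{\ln^{-2}(e+s)}{1+s}\,M(s)\,ds$,
where the kernel is made integrable by Proposition~\ref{prop2} (applied both to $E_0$ for $\|(\rho,u,\tau)\|_{L^2}^2$ and to $(1+t)E_1$ for $\|\nabla(\rho,u,\tau)\|_{H^1}$). Gr\"onwall with an integrable kernel then gives $M(t)\lesssim 1$ with no smallness requirement at all. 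If you want to salvage your version, you should keep $E_0(s)=M(s)(1+s)^{-1/2}$ inside the $s$‑integral, bound $D_0^{1/2}\le\|\nabla(\rho,u,\tau)\|_{H^1}+\cdots\lesssim(1+s)^{-1/2}\ln^{-l/2}(e+s)$ via Proposition~\ref{prop2}, and invoke Gr\"onwall rather than a continuity argument; that is precisely the paper's route. A secondary remark: your plan for $E_1$ by re‑running the splitting with weight $(1+t)^{3/2}$ should be replaced (or at least compared) with the paper's shortcut, which uses the enhanced integrability $(1+t)^{-1}\int_0^t(1+s)^{3/2}D_0\,ds\le C$ together with $E_1\lesssim D_0$ and the weight $(1+t)^{5/2}$ on the $\sigma=1$ energy inequality; this avoids having to re‑estimate $B_1,B_2$ at the first‑derivative level.
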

				\begin{proof}
					Define $S(t)=\{\xi:|\xi|^2\leq\frac{C_2}{1+t}\}$ for some $C_2$ large enough. Taking $\sigma=0$ in $(\ref{1ineq1})$, then we infer from Schonbek's strategy that
					\begin{align}\label{4ineq1}
						\frac {d} {dt} E_0 +\frac{C_2}{(1+t)}\int|\hat{u}|^2+|\hat{\rho}|^2d\xi &+
						\|\Lambda^s\rho\|_{L^2}+  \|\Lambda^{s+1}u\|^2_{L^2} + \| g\|^2_{H^{s}(\mathcal{L}^{2})} \\ \notag
						&\lesssim \frac{1}{1+t}\int_{S(t)}|\hat{u}|^2+|\hat{\rho}|^2d\xi.
					\end{align}
				It follows from Lemma \ref{1lem1} that
				\begin{align}\label{4ineq2}
					\int_{S_0(t)}|\hat{\rho}|^2+|\hat{u}|^2d\xi
					&\lesssim (1+t)^{-1} +(1+t)^{-1}B_1 +B_2 \\ \notag
					&\lesssim (1+t)^{-1} +(1+t)^{-1}\int_0^t\|(\rho,u)\|^4_{L^2}ds\\ \notag
					&~~~~+(1+t)^{-\frac{1}{2}}\int_0^t\|(\rho,u)\|^2_{L^2}\|\nabla(\rho,u,\tau)\|_{H^1}ds.
				\end{align}
				Combining with estimates \eqref{4ineq1} and \eqref{4ineq2}, we deduce that
					\begin{align}\label{4ineq3}
						\frac {d} {dt} E_0 &+\frac{C_2}{(1+t)}\int_{\mathbb{R}^2}|\hat{u}|^2+|\hat{\rho}|^2d\xi+
						\|\Lambda^s\rho\|_{L^2} +  \|\Lambda^{s+1}u\|^2_{L^2} + \| g\|^2_{H^{s}(\mathcal{L}^{2})} \\ \notag
						&\lesssim (1+t)^{-2} + (1+t)^{-2}\int_0^t\|(\rho,u)\|^4_{L^2}ds +(1+t)^{-\frac{3}{2}}\int_0^t\|(\rho,u)\|^2_{L^2}\|\nabla(\rho,u,\tau)\|_{H^1}ds,
					\end{align}
	Multiplying \eqref{4ineq3} by $(1+t)^{2}$ and integrating it over $[0,t]$ leads to
					\begin{align}\label{4ineq4}
						(1+t)^{2}E_0 &\lesssim (1+t)^{1}+(1+t)^{1}\int_0^t\|(\rho,u)\|^4_{L^2}ds \\ \notag
						&~~~~+ (1+t)^{\frac{3}{2}}\int_0^t\|(\rho,u)\|^2_{L^2}\|\nabla(\rho,u,\tau)\|_{H^1}ds.
					\end{align}
					Define $M(t)=\mathop{\sup}\limits_{s\in[0,t]} (1+s)^{\frac{1}{2}}E_0(s)$. According to Proposition $\ref{prop3}$ with $l=2$, we obtain
					\begin{align}\label{4ineq5}
						M(t) &\lesssim  1 + \int_0^t(1+s)^{-1}\|(\rho,u,\tau)\|^2_{L^2}M(s)ds+ \int_0^t(1+s)^{-\frac{1}{2}}\|\nabla(\rho,u,\tau)\|_{H^1}M(s) ds \\ \notag
						&\lesssim  1 + \int_0^t\frac{\ln^{-2}(e+s)}{1+s}M(s)ds.
					\end{align}
					We infer from Gr\"{o}nwall's inequality that $M(t) \lesssim 1$, which implies
					\begin{align}\label{4ineq6}
						E_0 \lesssim (1+t)^{-\frac{1}{2}}.
					\end{align}
				Further, multiplying \eqref{3ineq2} by $(1+t)^{\frac{3}{2}}$, we obtain
				\begin{align}\label{4ineq7}
					\frac {d} {dt} [(1+t)^{\frac{3}{2}}E_0(t)] + (1+t)^{\frac{3}{2}}D_0(t) &\lesssim (1+t)^{\frac{1}{2}}E_0(t),
				\end{align}
				which implies the following enhanced integrability that
				\begin{align}\label{4ineq8}
					(1+t)^{-1}\int_0^t(1+t)^{\frac{3}{2}}D_0ds \leq C.
				\end{align}
			Multiplying \eqref{1ineq1} by $(1+t)^{\frac{5}{2}}$ and taking $\sigma=1$, we infer that
			\begin{align}\label{4ineq9}
				\frac {d} {dt} [(1+t)^{\frac{5}{2}}E_1(t)] 
				&\lesssim (1+t)^{\frac{3}{2}}E_1(t) \\ \notag
				&\lesssim (1+t)^{\frac{3}{2}}D_0(t),
			\end{align}
			which implies by using \eqref{4ineq8} that
					\begin{align}\label{4ineq10}
			(1+t)E_1(t) &\lesssim (1+t)^{-\frac{3}{2}}\int_0^t(1+s)^{\frac{3}{2}}D_0(s)ds \\ \notag
			& \lesssim (1+t)^{-\frac{1}{2}}.
			\end{align}
			 We thus complete the proof of Proposition \ref{prop3}.
				\end{proof}
				By Proposition \ref{prop3}, we can show that the solution of \eqref{eq1} belongs to some Besov spaces with negative index.
				\begin{lemm}\label{1lem2}
					Let $0 < \alpha, \sigma \leq 1$ and $ \sigma < 2\alpha$. Assume that $(\rho_0, u_0, g_0)$ satisfies the condition in Theorem \ref{th2}. If
					\begin{align}\label{5ineq0}
						E_0(t) + (1+t)E_1(t) \leq C(1 + t)^{-\alpha},
					\end{align}
					then it holds that
					\begin{align}\label{5ineq1}
						(\rho,u,\tau)\in L^{\infty}(0,\infty;\dot{B}^{-\sigma}_{2,\infty}).
					\end{align}
				\end{lemm}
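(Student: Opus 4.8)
\emph{Strategy.} I would prove this by a frequency-localized version of the energy estimate behind Theorem~\ref{th1}. Apply the dyadic block $\dot\Delta_j$ to \eqref{eq1} and run the same $L^2\times L^2\times L^2(\mathcal{L}^2)$ computation: pair the three equations with $\dot\Delta_j\rho,\dot\Delta_j u,\dot\Delta_j g$ and add the cross term $\eta\langle\nabla\dot\Delta_j\rho,\dot\Delta_j u\rangle$ with $\eta$ small. For $j\le 0$ this produces a Lyapunov functional $\mathcal{L}_j(t)\simeq\|\dot\Delta_j(\rho,u)\|_{L^2}^2+\|\dot\Delta_j g\|_{L^2(\mathcal{L}^2)}^2$ obeying
\[
\tfrac{d}{dt}\mathcal{L}_j+c\,2^{2j}\|\dot\Delta_j(\rho,u)\|_{L^2}^2+c\,\|\nabla_q\dot\Delta_j g\|_{L^2(\mathcal{L}^2)}^2\ \lesssim\ \|\dot\Delta_j N\|\,\mathcal{L}_j^{1/2},
\]
where $N$ collects the quadratic terms of \eqref{eq1}, each of the form ``(fluctuation)\,$\times$\,(first $x$- or $q$-derivative of a fluctuation)'', the linear couplings $\mathrm{div}\,\tau,\ \mathrm{div}\,u,\ \nabla u\,q\nabla_q\mathcal U$ being absorbed exactly as for Theorem~\ref{th1}; the $q$-Poincar\'e inequality of $\psi_\infty$ (using $\int g\psi_\infty\,dq=0$) turns $\|\nabla_q\dot\Delta_j g\|^2$ into $\gtrsim\|\dot\Delta_j g\|^2_{\mathcal{L}^2}$. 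Dividing by $\mathcal{L}_j^{1/2}$ and using Duhamel's formula gives, for $j\le 0$,
\[
\|\dot\Delta_j(\rho,u,\tau)(t)\|_{L^2}\ \lesssim\ e^{-c2^{2j}t}\|\dot\Delta_j(\rho_0,u_0,\tau(g_0))\|_{L^2}+\int_0^t e^{-c2^{2j}(t-s)}\|\dot\Delta_j N(s)\|_{L^2}\,ds ,
\]
the $\tau$-component being obtained from the damped equation for $\tau$ (the generalization of \eqref{tauequ}), whose closure remainder is, by \eqref{potential1}--\eqref{potential2} and Lemma~\ref{Lemma3}, controlled by $\|\langle q\rangle\nabla_q\dot\Delta_j g\|$, i.e.\ by the dissipation, and whose surviving datum is precisely $\dot\Delta_j\tau(g_0)$.

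\emph{High frequencies.} For $j\ge 0$, $2^{-j\sigma}\|\dot\Delta_j(\rho,u,\tau)\|_{L^2}\le\|(\rho,u,\tau)\|_{L^2}\lesssim E_0(t)^{1/2}\lesssim 1$ by \eqref{ineq0} and Lemma~\ref{Lemma4}, so these modes are harmless.

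\emph{Low frequencies.} For the linear term we interpolate: since $(\rho_0,u_0,\tau(g_0))\in\dot B^{-1}_{2,\infty}\cap L^2$ (the $L^2$ part for $\tau(g_0)$ via Lemma~\ref{Lemma4} and $g_0\in L^2(\mathcal{L}^2)$) and $0<\sigma\le 1$,
\[
\sup_j 2^{-j\sigma}\|\dot\Delta_j(\rho_0,u_0,\tau(g_0))\|_{L^2}\ \le\ \|(\rho_0,u_0,\tau(g_0))\|_{L^2}^{1-\sigma}\,\|(\rho_0,u_0,\tau(g_0))\|_{\dot B^{-1}_{2,\infty}}^{\sigma}\ \lesssim\ 1 .
\]
For the nonlinear term, Bernstein (in $\mathbb R^2$) gives $\|\dot\Delta_j N(s)\|_{L^2}\lesssim 2^{j}\|N(s)\|_{L^1}$, and since every summand of $N$ is a product of a fluctuation and a first-order derivative of a fluctuation, Cauchy--Schwarz together with Lemmas~\ref{Lemma3}--\ref{Lemma4} and the hypothesis \eqref{5ineq0} yield $\|N(s)\|_{L^1}\lesssim E_0(s)^{1/2}E_1(s)^{1/2}\lesssim(1+s)^{-\alpha/2}(1+s)^{-(1+\alpha)/2}=(1+s)^{-\frac12-\alpha}$. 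Hence for $j\le 0$
\[
2^{-j\sigma}\!\int_0^t e^{-c2^{2j}(t-s)}\|\dot\Delta_j N(s)\|_{L^2}\,ds\ \lesssim\ 2^{j(1-\sigma)}\!\int_0^t e^{-c2^{2j}(t-s)}(1+s)^{-\frac12-\alpha}\,ds .
\]
Splitting the $s$-integral at $t/2$ and using $2^{j}\lesssim\min(1,t^{-1/2})$ on the very-low-frequency part, the right-hand side is $\lesssim(1+t)^{\frac{\sigma}{2}-\alpha}$, which is bounded uniformly in $t$ exactly because $\sigma<2\alpha$; the range $t\le 1$ is handled by continuity. Taking the supremum over $j\in\mathbb Z$ and $t\ge 0$ yields \eqref{5ineq1}.

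\emph{Main obstacle.} The delicate point is the stress tensor. The micro--macro system does not close on $\tau$, and only the moment $\tau(g_0)$ — not $g_0$ itself — is assumed in $\dot B^{-1}_{2,\infty}$, so one cannot merely propagate $\|\dot\Delta_j g_0\|_{\mathcal{L}^2}$ (which is not small at low frequencies). Making the argument see only $\tau(g_0)$ forces one to work with the generalized Oldroyd-B equation for $\tau$ and to verify that the non-Hookean closure remainder is genuinely absorbable by the $q$-dissipation, which is exactly what the structural assumptions \eqref{potential1}--\eqref{potential2} are designed to guarantee; the balance $\sigma<2\alpha$ (together with $\sigma\le 1$ in the interpolation step) is the other quantitative input, consumed in the nonlinear Duhamel estimate above.
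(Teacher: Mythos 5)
Your proposal follows the same broad blueprint as the paper (frequency-localize \eqref{eq1}, form the Lyapunov functional $\|\dot\Delta_j(\rho,u)\|^2_{L^2}+\|\dot\Delta_j g\|^2_{L^2(\mathcal L^2)}+\eta\langle\nabla\dot\Delta_j\rho,\dot\Delta_j u\rangle$, use the $q$-Poincar\'e inequality to get damping on $g$), but the mechanics after that differ. The paper multiplies the block-energy inequality \eqref{5ineq2} by $2^{-2j\sigma}$, takes the $\ell^\infty_j$ norm to get a differential inequality for the squared Besov norm, then estimates each quadratic term directly in $L^{\frac{2}{\sigma+1}}$ via the embedding $L^p\hookrightarrow\dot B^{-\sigma}_{2,\infty}$ and Gagliardo--Nirenberg, and integrates in time dropping the dissipation. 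You instead solve each block with Duhamel's formula and use Bernstein to reduce the nonlinearity to $\|N\|_{L^1}\lesssim E_0^{1/2}E_1^{1/2}\lesssim(1+s)^{-\frac12-\alpha}$; the two routes consume the hypothesis $\sigma<2\alpha$ in equivalent places and produce the same conclusion. That part is fine and is a legitimate alternative.

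The gap is in the treatment of the $g$-component. The Duhamel representation you write down carries the initial value $\mathcal L_j(0)^{1/2}\simeq\|\dot\Delta_j(\rho_0,u_0)\|_{L^2}+\|\dot\Delta_j g_0\|_{L^2(\mathcal L^2)}$, not $\|\dot\Delta_j\tau(g_0)\|_{L^2}$: the Lyapunov functional you built is in $g$, and you cannot simply declare that ``the surviving datum is precisely $\dot\Delta_j\tau(g_0)$.'' If instead you replace $g$ by $\tau$ and close on the Oldroyd-B equation, the linear cancellation $\langle\mathrm{div}\,\dot\Delta_j\tau,\dot\Delta_j u\rangle+\langle\dot\Delta_j\tau,\nabla\dot\Delta_j u\rangle$ still works, but the generalization of \eqref{tauequ} to the non-Hookean potentials \eqref{potential1}--\eqref{potential2} is not a closed equation: the term $\int \nabla_q(q_i\partial_{q_j}\mathcal U)\cdot\nabla_q g\,\psi_\infty\,dq$ does not reduce to $-2\tau_{ij}$, and the resulting closure remainder $R$ is bounded only by $\|\dot\Delta_j(\langle q\rangle\nabla_q g)\|_{L^2(\mathcal L^2)}$. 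In the low-frequency Duhamel estimate you then face $\sup_{j\le0}2^{-\sigma j}\int_0^t e^{-2(t-s)}\|\dot\Delta_j(\langle q\rangle\nabla_q g)\|\,ds$, and controlling this requires $\langle q\rangle\nabla_q g\in\dot B^{-\sigma}_{2,\infty}(\mathcal L^2)$, which is exactly the kind of low-frequency information on $g$ you were trying to avoid. So the $g\leftrightarrow\tau$ swap as you sketch it is circular; it needs an additional argument (e.g.\ showing the remainder has extra structure, or an extra low-frequency bound on $\langle q\rangle\nabla_q g$) that you have not supplied. You are right that the paper's hypothesis only puts $\tau(g_0)=\int q\otimes\nabla_q\mathcal U\,g_0\,dq$ in $\dot B^{-1}_{2,\infty}$ while the integrated form of \eqref{5ineq3}, namely \eqref{5ineq10}, tacitly uses a bound on $\|g_0\|_{\dot B^{-\sigma}_{2,\infty}(\mathcal L^2)}$ — that is a legitimate observation worth raising — but your proposed repair does not yet close.
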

				\begin{proof}
					Applying $\dot{\Delta}_j$ to $(\ref{eq1})$, we obtain by virtue of standard energy estimate that
					\begin{align}\label{5ineq2}
						&\frac{1}{2}\frac{d}{dt}(\|\Delta_jg\|^2_{L^2(\mathcal{L}^2)}+\|\Delta_j(\rho,u)\|^2_{L^2}+\eta\langle\Delta_ju,\nabla\Delta_j\rho\rangle)\\ \notag
						&+\eta2^{2j}\|\Delta_j\rho\|^2_{L^2}+2^{2j}\|\Delta_ju\|^2_{L^2}+\|\nabla_g\Delta_jg\|^2_{L^2(\mathcal{L}^2)}\\ \notag &\leq \|\Delta_j(\rho u)\|^2_{L^2}+2^{2j}\|\Delta_j(\rho u)\|_{L^2}\|\Delta_ju\|_{L^2}+ \|\Delta_j(u\cdot\nabla u)\|_{L^2}\|\Delta_ju\|_{L^2}+\|\Delta_j(u\cdot\nabla u)\|^2_{L^2} \\ \notag &+\|\Delta_jg\|^2_{L^2}+\|\Delta_jg\|_{L^2}\|\Delta_ju\|_{L^2}+\|\Delta_j(u\cdot\nabla g)\|^2_{L^2(\mathcal{L}^2)}+\|\Delta_j(\nabla uqg)\|^2_{L^2(\mathcal{L}^2)}.
					\end{align}
					Multiplying both sides of \eqref{5ineq2} by $2^{-2j\sigma}$ and taking $l^{\infty}$ norm for $j\in \mathbb{N}$, we obtain
					\begin{align}\label{5ineq3}
						&\frac{1}{2}\frac{d}{dt}(\|g\|^2_{\dot{B}^{-\sigma}_{2,\infty}(\mathcal{L}^2)}+\|\rho\|^2_{\dot{B}^{-\sigma}_{2,\infty}}+\|u\|^2_{\dot{B}^{-\sigma}_{2,\infty}})+\eta\|\rho\|^2_{\dot{B}^{-\sigma+1}_{2,\infty}}+\|u\|^2_{\dot{B}^{-\sigma+1}_{2,\infty}}+\|\nabla_gg\|^2_{\dot{B}^{-\sigma}_{2,\infty}(\mathcal{L}^2)}\\ \notag &\leq \|\rho u\|^2_{\dot{B}^{-\sigma}_{2,\infty}}+\|\rho u\|_{\dot{B}^{-\sigma+1}_{2,\infty}}\|u\|_{\dot{B}^{-\sigma+1}_{2,\infty}}+ \|u\cdot\nabla u\|_{\dot{B}^{-\sigma}_{2,\infty}}\|u\|_{\dot{B}^{-\sigma}_{2,\infty}}+\|u\cdot\nabla u\|^2_{\dot{B}^{-\sigma}_{2,\infty}} \\ \notag &+\|g\|^2_{\dot{B}^{-\sigma}_{2,\infty}}+\|g\|_{\dot{B}^{-\sigma}_{2,\infty}}\|u\|_{\dot{B}^{-\sigma}_{2,\infty}}+\|u\cdot\nabla g\|^2_{\dot{B}^{-\sigma}_{2,\infty}(\mathcal{L}^2)}+\|\nabla uqg\|^2_{\dot{B}^{-\sigma}_{2,\infty}(\mathcal{L}^2)}.
					\end{align}
					According to Lemma \ref{Lemma4} and \eqref{5ineq0}, we infer
					\begin{align}\label{5ineq4}
						\|\rho u\|^2_{\dot{B}^{-\sigma}_{2,\infty}} \leq \|\rho u\|^2_{L^{\frac{2}{\sigma+1}}} \lesssim \|\rho\|^2_{L^2}\|u\|^2_{L^{\frac{2}{\sigma}}}\lesssim \|\rho\|^2_{L^2}\|u\|^{2\sigma}_{L^2}\|\nabla u\|^{2-2\sigma}_{L^2}\lesssim (1+t)^{-2\alpha-1+\sigma},
					\end{align}
					and
					\begin{align}\label{5ineq5}
						\|u\cdot\nabla u\|^2_{\dot{B}^{-\sigma}_{2,\infty}} \leq \|u\cdot\nabla u\|^2_{L^{\frac{2}{\sigma+1}}} \lesssim \|\nabla u\|^2_{L^2}\|u\|^2_{L^{\frac{2}{\sigma}}}\lesssim \|u\|^{2\sigma}_{L^2}\|\nabla u\|^{4-2\sigma}_{L^2}\lesssim (1+t)^{-2\alpha-2+\sigma},
					\end{align}
					as well as
					\begin{align}\label{5ineq6}
						\|\rho u\|^2_{\dot{B}^{-\sigma+1}_{2,\infty}} \leq \|\rho u\|^2_{L^{\frac{2}{\sigma}}}\lesssim \|(\rho, u)\|^{2}_{H^{s-1}}\|(\rho, u)\|^{2\sigma}_{L^2}\|\nabla(\rho, u)\|^{2-2\sigma}_{L^2}\lesssim (1+t)^{-2\alpha-2+\sigma}.
					\end{align}
					Similarly, by virtue of Lemmas \ref{Lemma4}, \ref{Lemma3} and condition \eqref{5ineq0}, we have
					\begin{align}\label{5ineq7}
						\|u\cdot\nabla g\|^2_{\dot{B}^{-\sigma}_{2,\infty}(\mathcal{L}^2)} \leq \|u\cdot\nabla g\|^2_{L^{\frac{2}{\sigma+1}}(\mathcal{L}^2)} \lesssim \|\nabla g\|^2_{L^2(\mathcal{L}^2)}\|u\|^{2\sigma}_{L^2}\|\nabla u\|^{2-2\sigma}_{L^2}\lesssim (1+t)^{-2\alpha-2+\sigma} ,
					\end{align}
					and
					\begin{align}\label{5ineq8}
						\|g\|^2_{\dot{B}^{-\sigma}_{2,\infty}} \leq \|g\|^2_{L^{\frac{2}{\sigma+1}}}\lesssim \|\nabla(\rho,u,\tau)\|^{2}_{H^{s-1}}\|\rho\|^{2\sigma}_{L^2}\|\nabla\rho\|^{2-2\sigma}_{L^2}\lesssim (1+t)^{-2\alpha-4+\sigma} ,
					\end{align}
					as well as
					\begin{align}\label{5ineq9}
						\|\nabla uqg\|^2_{\dot{B}^{-\sigma}_{2,\infty}(\mathcal{L}^2)} &\leq \|\nabla uqg\|^2_{L^{\frac{2}{\sigma+1}}(\mathcal{L}^2)}\\ \notag
						&\lesssim \|\nabla u\|^{2}_{L^2}\|qg\|^2_{L^{\frac{2}{\sigma}}(\mathcal{L}^2)} + \| qg\|^{2}_{L^2(\mathcal{L}^2)}\|\nabla u\|^2_{L^{\frac{2}{\sigma}}}\\ \notag
						& \lesssim \|\nabla u\|^2_{L^2} + \|\nabla_qg\|^2_{L^2}.
					\end{align}
					Therefore, according to estimates from \eqref{5ineq3} to \eqref{5ineq9}, we conclude that
					\begin{align}\label{5ineq10}
						\|g\|^2_{\dot{B}^{-\sigma}_{2,\infty}(\mathcal{L}^2)}&+\|\rho\|^2_{\dot{B}^{-\sigma}_{2,\infty}}+\|u\|^2_{\dot{B}^{-\sigma}_{2,\infty}}\\ \notag
						&\lesssim 1 + \int_0^t (1+t)^{-(\alpha+1-\frac{\delta}{2})}ds\lesssim 1.
					\end{align}
					This completes the proof of Lemma \ref{1lem2}~.
				\end{proof}
				
				Thanks to Lemma \ref{1lem2}~, we can improve the time decay rate by Littlewood-Paley decomposition theory and Fourier splitting method.
				\begin{lemm}\label{1lem3}
					Let $0 < \beta, \sigma \leq 1$ and $\frac{1}{2} \leq \alpha$. Assume that $(\rho_0, u_0, \tau_0)$ satisfies the condition in
					Theorem \ref{th2}. For any $t\in [0,\infty)$, if
					\begin{align}\label{6ineq0}
						E_0(t) +(1+t)E_1(t) \lesssim (1 + t)^{-\alpha},
					\end{align}
					and
					\begin{align}\label{6ineq1}
						(\rho, u, \tau) \in L^{\infty}(0,\infty; B^{-\sigma}_{2,\infty}),
					\end{align}
					then there exists a positive constant $C$ such that
					\begin{align}\label{6ineq2}
						E_0(t) + (1+t)E_1(t) \leq  C(1 + t)^{-\beta},
					\end{align}
					where $\beta <\frac{\sigma+1}{2}$ for $\alpha = \frac{1}{2}$ and $\beta = \frac{\sigma+1}{2}$ for $\alpha > 1$.
				\end{lemm}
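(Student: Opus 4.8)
The plan is to run the Fourier splitting method on the zeroth order energy inequality \eqref{1ineq1} (with $\sigma=0$), to control the resulting low frequency ball by Lemma \ref{1lem1}, and to use the negative Besov regularity \eqref{6ineq1} in order to squeeze extra decay out of the nonlinear source $G$; the bound for $(1+t)E_1$ is then recovered from $E_1\lesssim D_0$ together with an enhanced integrability of $D_0$, exactly as in the proofs of Propositions \ref{prop2} and \ref{prop3}. Since $E_0$ is bounded on $[0,T_0]$, it suffices to argue for $t>T_0$, where $T_0$ is the time furnished by Lemma \ref{1lem1}.

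First I would fix $N>\tfrac{\sigma+1}{2}$ and start from $\tfrac{d}{dt}E_0+D_0\le0$. Because $\int_{\mathbb R^2}g\psi_\infty\,dq=0$, the spectral gap of $\mathcal L$ (Lemma \ref{Lemma4}) gives $\|g\|_{H^s(\mathcal L^2)}^2\lesssim\|\nabla_qg\|_{H^s(\mathcal L^2)}^2$, so $D_0$ already dominates $\|g\|_{H^s(\mathcal L^2)}^2$ and every $x$-derivative of $(\rho,u)$ of order $\ge1$. Multiplying by $(1+t)^N$, splitting $\int_{\mathbb R^2}(|\hat\rho|^2+|\hat u|^2)\,d\xi=\int_{S(t)}+\int_{S(t)^c}$ with $S(t)=\{\xi:|\xi|^2\le C_2(1+t)^{-1}\}$, and using $\int_{S(t)^c}|\xi|^2(|\hat\rho|^2+|\hat u|^2)\,d\xi\ge\tfrac{C_2}{1+t}\int_{S(t)^c}(|\hat\rho|^2+|\hat u|^2)\,d\xi$, one absorbs the complement part into $c(1+t)^ND_0$ for $C_2$ large, obtaining
\[
\frac{d}{dt}\big[(1+t)^NE_0\big]+c(1+t)^ND_0\lesssim(1+t)^{N-1}\int_{S(t)}\big(|\hat\rho|^2+|\hat u|^2\big)\,d\xi .
\]

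Next I would estimate the ball. Lemma \ref{1lem1} with $f(t)=1+t$ gives $\int_{S(t)}(|\hat\rho|^2+|\hat u|^2)\,d\xi\lesssim(1+t)^{-1}(1+B_1)+B_2$. By \eqref{6ineq0}, $\|(\rho,u)\|_{L^2}^2\lesssim(1+t)^{-\alpha}$, hence $B_1\lesssim\int_0^t(1+s)^{-2\alpha}\,ds$, which is $O(\ln(1+t))$ if $\alpha=\tfrac12$ and $O(1)$ if $\alpha>1$. For $B_2$ I would apply Cauchy--Schwarz on $S(t)$, bound $\int_{S(t)}|\hat G|^2\,d\xi\le\|G\|_{L^1}^2|S(t)|\lesssim\|G\|_{L^1}^2(1+t)^{-1}$, and---this is the crucial point---bound $\int_{S(t)}|\hat u|^2\,d\xi\lesssim(1+t)^{-\sigma}\|u\|_{\dot B^{-\sigma}_{2,\infty}}^2\lesssim(1+t)^{-\sigma}$ by the Littlewood--Paley decomposition and \eqref{6ineq1}. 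This yields $\int_{S(t)}|\hat G\cdot\bar{\hat u}|\,d\xi\lesssim(1+t)^{-\frac{1+\sigma}{2}}\|G\|_{L^1}$ and $B_2\lesssim(1+t)^{-\frac{1+\sigma}{2}}\int_0^t\|G\|_{L^1}\,ds$. Estimating $\|G\|_{L^1}\lesssim\|(\rho,u)\|_{L^2}\|\nabla(\rho,u,\tau)\|_{H^1}$ (Lemma \ref{Lemma4} for $\tau$) and invoking \eqref{6ineq0} gives $\|G\|_{L^1}\lesssim(1+s)^{-\frac\alpha2}(1+s)^{-\frac{1+\alpha}{2}}=(1+s)^{-\frac12-\alpha}$, so $\int_0^t\|G\|_{L^1}\,ds$ is again $O(\ln(1+t))$ if $\alpha=\tfrac12$ and $O(1)$ if $\alpha>1$. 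Altogether $\int_{S(t)}(|\hat\rho|^2+|\hat u|^2)\,d\xi\lesssim(1+t)^{-\frac{1+\sigma}{2}}$, up to a factor $\ln(1+t)$ when $\alpha=\tfrac12$.

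Plugging this into the differential inequality and integrating on $[0,t]$ (using $N>\tfrac{\sigma+1}{2}$) yields $E_0\lesssim(1+t)^{-\beta}$ with $\beta<\tfrac{\sigma+1}{2}$ when $\alpha=\tfrac12$ (the logarithm forces strictness) and $\beta=\tfrac{\sigma+1}{2}$ when $\alpha>1$. Finally, to get $(1+t)E_1\lesssim(1+t)^{-\beta}$, I would multiply $\tfrac{d}{dt}E_0+D_0\le0$ by $(1+t)^{1+\beta}$ to obtain $\int_0^t(1+s)^{1+\beta}D_0\,ds\lesssim1+t$, then use $E_1\lesssim D_0$ and multiply $\tfrac{d}{dt}E_1+D_1\le0$ by $(1+t)^{2+\beta}$ to conclude $(1+t)E_1\lesssim(1+t)^{-\beta}$, exactly as in \eqref{4ineq7}--\eqref{4ineq10}. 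The main obstacle is the estimate of $B_2$: the naive bound $|S(t)|^{1/2}\sim(1+t)^{-1/2}$ alone only produces $\beta=\tfrac12$, and the gain to $\tfrac{\sigma+1}{2}$ rests entirely on replacing one $L^2$ factor in the Cauchy--Schwarz splitting of $\int_{S(t)}|\hat G\cdot\bar{\hat u}|\,d\xi$ by the interpolation bound $\int_{S(t)}|\hat u|^2\,d\xi\lesssim(1+t)^{-\sigma}$ coming from $\dot B^{-\sigma}_{2,\infty}$, and then checking that the residual time integral $\int_0^t\|G\|_{L^1}\,ds$ converges (or grows at most logarithmically); this is precisely where the hypothesis $\alpha\ge\tfrac12$ is used, and it is also the source of the logarithmic loss, hence of the strict inequality $\beta<\tfrac{\sigma+1}{2}$, in the borderline case $\alpha=\tfrac12$.
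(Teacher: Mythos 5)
Your proposal is correct and follows essentially the same route as the paper: multiply the zeroth-order energy inequality by a time weight, perform Schonbek's splitting, control the low-frequency ball through Lemma \ref{1lem1}, estimate $(1+t)^{-1}B_1$ directly from \eqref{6ineq0}, and estimate $B_2$ by combining $|\hat G|\le\|G\|_{L^1}$ with a low-frequency bound on $\hat u$ coming from the $\dot B^{-\sigma}_{2,\infty}$ hypothesis, then close the $E_1$ estimate via the enhanced integrability $\int_0^t(1+s)^{1+\beta}D_0\,ds\lesssim1+t$ as in \eqref{4ineq7}--\eqref{4ineq10}. The one cosmetic difference is in $B_2$: you write $\int_{S(t)}|\hat G||\hat u|\,d\xi\le\bigl(\int_{S(t)}|\hat G|^2\bigr)^{1/2}\bigl(\int_{S(t)}|\hat u|^2\bigr)^{1/2}$ and bound $\int_{S(t)}|\hat u|^2\,d\xi\lesssim(1+t)^{-\sigma}\|u\|^2_{\dot B^{-\sigma}_{2,\infty}}$, whereas the paper's \eqref{6ineq4} instead inserts the partition $\sum_{j\le J}\varphi^2(2^{-j}\xi)$ into $|\hat u|$ and sums $2^{(1+\sigma)j}$ geometrically; both produce the same factor $(1+t)^{-\frac{1+\sigma}{2}}\|u\|_{\dot B^{-\sigma}_{2,\infty}}\|G\|_{L^1}$ inside the time integral, and the remaining bookkeeping ($\|G\|_{L^1}\lesssim\|(\rho,u)\|_{L^2}\|\nabla(\rho,u,\tau)\|_{H^1}\lesssim(1+s)^{-\frac12-\alpha}$, convergence/log-growth of $\int_0^t\|G\|_{L^1}\,ds$, and the strictness of $\beta<\frac{\sigma+1}{2}$ in the borderline case) is identical.
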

				\begin{proof}
					By virtue of conditions \eqref{6ineq0} and \eqref{6ineq1}, we infer that
					\begin{align}\label{6ineq3}
						(1+t)^{-1}B_1 =  (1+t)^{-1}\int_0^t\|(\rho,u)\|^4_{L^2} ds 
						& \lesssim (1+t)^{-1}\int_0^t(1+t)^{-2\alpha}ds \\ \notag
						&\lesssim (1 + t)^{-\beta},
					\end{align}
					and
					\begin{align}\label{6ineq4}
						B_2 = \int_{S(t)} \int_0^t|\hat{G}||\hat{u}| dsd\xi &\lesssim \int_0^t\|G\|_{L^1}\mathop{\sum}\limits_{j\leq\log_2[\frac{4}{3}C_2^{\frac{1}{2}}(1+t)^{-\frac{1}{2}}]} \int_{S(t)}2^{\sigma j}2^{-\sigma j}\varphi^2(2^{-j}\xi)|\hat{u}|d\xi ds\\ \notag
						& \lesssim (1+t)^{-\frac{1}{2}-\frac{\sigma}{2}}\int_0^t\|u\|_{\dot{B}^{-\sigma}_{2,\infty}}\|(\rho,u)\|_{L^2}\|\nabla(\rho,u,\tau)\|_{H^1}ds\\ \notag
						& \lesssim (1+t)^{-\frac{1}{2}-\frac{\sigma}{2}}\int_0^t(1+t)^{-\alpha-\frac{1}{2}}ds \\ \notag
						&\lesssim (1 + t)^{-\beta},
					\end{align}
					According to the proof of Proposition \ref{prop2} and Lemma \ref{1lem1}, we deduce that
					\begin{align}\label{6ineq5}
						\frac {d} {dt} E_0+\frac{C_2}{(1+t)}\int|\hat{u}|^2+|\hat{\rho}|^2d\xi&+
						\|\Lambda^s\rho\|_{L^2}+  \|\Lambda^{s+1}u\|^2_{L^2} + \| g\|^2_{H^{s}(\mathcal{L}^{2})} \\ \notag
						&\lesssim (1+t)^{-\beta-1}.
					\end{align}
					which implies
					\begin{align}\label{6ineq6}
						E_0(t) \lesssim (1 + t)^{-\beta}.~~
					\end{align}
				By performing a routine procedure, one can arrive at
					\begin{align}\label{6ineq7}
						E_1(t) \lesssim (1 + t)^{-\beta-1}.
					\end{align}
				We thus complete the proof of Lemma \ref{1lem3}.
				\end{proof}
				Thus we can obtain the optimal decay rate in $L^2$ by using the bootstrap argument as follows.
				\begin{prop}\label{prop4}
					Assume that $(\rho_0, u_0, \tau_0)$ satisfies the condition in
					Theorem \ref{th2}, then
					\begin{align}\label{7ineq0}
						E_0(t) + (1 + t)E_1(t) \lesssim (1 + t)^{-1}.~
					\end{align}
					\begin{proof}
						According to Proposition \ref{prop2} and Lemma \ref{1lem2} with
						$\alpha = \sigma = \frac{1}{2}$, we have
						\begin{align}\label{7ineq1}
							(\rho,u) \in L^{\infty}(0, \infty; B^{-\frac{1}{2}}_{2,\infty}),~~~g\in L^{\infty}(0, \infty; B^{-\frac{1}{2}}_{2,\infty}(\mathcal{L}^2)).
						\end{align}
						Taking advantage of Lemma \ref{1lem3} with $\alpha = \sigma = \frac{1}{2}$ and $\beta = \frac{5}{8}$, we deduce that
						\begin{align}\label{7ineq2}
							E_0(t) \lesssim (1 + t)^{-\frac{5}{8}},~~~E_1(t) \lesssim (1 + t)^{-\frac{13}{8}}.
						\end{align}
						Taking $\sigma = 1$ and $\alpha = \frac{5}{8}$
						in Lemma \ref{1lem2}, we infer that
						\begin{align}\label{7ineq3}
							(\rho,u) \in L^{\infty}(0, \infty; B^{-1}_{2,\infty}),~~~g\in L^{\infty}(0, \infty; B^{-1}_{2,\infty}(\mathcal{L}^2)).
						\end{align}
						Using Propositions \ref{1lem3} again with $\alpha = \frac{5}{8}$
						and $\sigma = \beta = 1$, we finally obtain
						\begin{align}\label{7ineq4}
							E_0(t) \lesssim (1 + t)^{-1},~~~E_1(t) \lesssim (1 + t)^{-2} .~~
						\end{align}
					This completes the proof of Proposition \ref{prop4}.
					\end{proof}
				\end{prop}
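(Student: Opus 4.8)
The plan is to close \eqref{7ineq0} by a short \emph{finite} bootstrap alternating between Lemma \ref{1lem2} and Lemma \ref{1lem3}. Lemma \ref{1lem2} converts a standing algebraic decay rate $(1+t)^{-\alpha}$ for $E_0+(1+t)E_1$ into membership of $(\rho,u,\tau)$ (and of $g$ with values in $\mathcal{L}^2$) in a negative-index Besov space $\dot{B}^{-\sigma}_{2,\infty}$, admissible as long as $\sigma<2\alpha$; Lemma \ref{1lem3} then feeds that Besov information back through the Fourier-splitting scheme behind Proposition \ref{prop2} and Lemma \ref{1lem1} and returns a strictly faster algebraic rate $(1+t)^{-\beta}$. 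Each round raises the exponent, and I expect two rounds to suffice to reach the optimal rate $(1+t)^{-1}$.

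First I would take as input the half rate $E_0(t)+(1+t)E_1(t)\lesssim(1+t)^{-1/2}$ from Proposition \ref{prop3}, i.e.\ $\alpha=\tfrac12$. Because of the constraint $\sigma<2\alpha$, at this stage Lemma \ref{1lem2} is only available with $\sigma=\tfrac12$, giving $(\rho,u,\tau)\in L^\infty(0,\infty;\dot{B}^{-1/2}_{2,\infty})$ and $g\in L^\infty(0,\infty;\dot{B}^{-1/2}_{2,\infty}(\mathcal{L}^2))$. Plugging $\alpha=\sigma=\tfrac12$ into Lemma \ref{1lem3} (the borderline case, so any $\beta<\tfrac{\sigma+1}{2}=\tfrac34$ is reached) and fixing, say, $\beta=\tfrac58$, I obtain $E_0(t)\lesssim(1+t)^{-5/8}$, $E_1(t)\lesssim(1+t)^{-13/8}$. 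Now $\alpha=\tfrac58>\tfrac12$, so Lemma \ref{1lem2} applies with $\sigma=1$ (indeed $1<2\cdot\tfrac58$), yielding $(\rho,u,\tau)\in L^\infty(0,\infty;\dot{B}^{-1}_{2,\infty})$ and $g\in L^\infty(0,\infty;\dot{B}^{-1}_{2,\infty}(\mathcal{L}^2))$; a final application of Lemma \ref{1lem3} with $\sigma=1$ — the exponent $\alpha$ now being above $\tfrac12$, so the full gain $\beta=\tfrac{\sigma+1}{2}=1$ is attained — delivers $E_0(t)\lesssim(1+t)^{-1}$ and $E_1(t)\lesssim(1+t)^{-2}$, which is exactly \eqref{7ineq0}.

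The one point of substance — and the reason the bootstrap cannot be carried out in a single step — is the interplay between the admissibility condition $\sigma<2\alpha$ in Lemma \ref{1lem2} and the fact that Lemma \ref{1lem3} only closes cleanly once $\alpha>\tfrac12$: starting from $\alpha=\tfrac12$ one can neither reach $\sigma=1$ nor extract the full gain, so the intermediate exponent $\tfrac58$ (any value in $(\tfrac12,\tfrac34)$ would serve) is used purely to cross the threshold $\tfrac12$ and unlock the second, terminating round. The final rate $E_0\lesssim(1+t)^{-1}$, i.e.\ $\|(\rho,u)\|_{L^2}\lesssim(1+t)^{-1/2}$, is sharp, matching the linear heat rate $(1+t)^{-d/4}$ with $d=2$. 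I do not anticipate any further obstacle at this level: all the genuine work — bounding the nonlinearities $\rho u$, $u\cdot\nabla u$, $u\cdot\nabla g$, $\nabla u\,q\,g$ in negative Besov norms via Lemmas \ref{Lemma3}–\ref{Lemma5} and running the Fourier-splitting and Gr\"onwall arguments — is already contained in Proposition \ref{prop3} and Lemmas \ref{1lem2}–\ref{1lem3}, so Proposition \ref{prop4} follows by simply chaining them.
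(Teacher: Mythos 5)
Your proposal matches the paper's proof essentially line for line: the same two-round bootstrap, the same intermediate exponent $\beta=\tfrac58$ (with $E_1\lesssim(1+t)^{-13/8}$), and the same alternation between Lemma \ref{1lem2} and Lemma \ref{1lem3}. The only notable difference is expository — you correctly observe that what unlocks the full gain $\beta=\tfrac{\sigma+1}{2}$ in Lemma \ref{1lem3} at the second round is having $\alpha>\tfrac12$ (so that $\int_0^t(1+s)^{-2\alpha}\,ds$ and $\int_0^t(1+s)^{-\alpha-1/2}\,ds$ converge), whereas the lemma's statement nominally restricts to $\alpha>1$; the paper silently applies the lemma with $\alpha=\tfrac58$ exactly as you do, so your reading of the lemma's effective hypothesis is the one actually used.
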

				\section{The $\dot{H}^s$ decay rate}
				In this section, we consider the optimal decay rate of $(\rho,u)$ in $\dot H^s$ and $g$ in $\dot H^s(\mathcal{L}^2)$. To the best of our knowledge, the dissipation of $\rho$ is of great significance to obtain the decay rate of $(\rho,u)$. However, it fails to obtain the optimal decay rate of $(\rho,u)$ in $\dot{H}^s$ by the same way as that of $L^2$ since the equivalence
				$$\|\Lambda^s(\rho,u)\|^2_{L^2} + \eta\langle\Lambda^{s-1}u,\Lambda^s\rho\rangle \simeq \|\Lambda^s(\rho,u)\|^2_{L^2}$$
				does not hold anymore for any $\eta>0$. In the usual way, we can show that
				$\|\Lambda^s(\rho,u)\|^2_{L^2}$ has the same decay rate as the quantity $\|\Lambda^{s-1}(\rho,u)\|^2_{L^2}$. Motivated by \cite{decay2022}, however, we aware that the dissipation of $\rho$ only in high frequency fully enables us to obtain optimal decay rate in $\dot{H}^s$. More precisely, to overcome the difficulty above, $\langle\Lambda^{s}\dot{\Delta}_j\rho,\Lambda^{s-1} \dot{\Delta}_ju\rangle$ in high frequency $S^c(t)$ is considered and it results in dissipation $\int_{S^c(t)}|\xi|^{2s}|\widehat{\rho}|^2d\xi$. To make full use of the benefit the dissipation $\rho$ provides, a critical fourier splitting estimate is established in the following lemma, which helps to obtain the optimal decay rate in $\dot H^s$. Define $S(t)=\{\xi\in\mathbb{R}^d||\xi|^2\leq\frac{C_2}{1+t}\}$ and consider the frequency decomposition as follows :
				\begin{align}
					\left\{\begin{array}{l}
						\rho=\mathcal{F}^{-1}(\chi_{S(t)}\hat{\rho}) + \mathcal{F}^{-1}(\chi_{S^c(t)}\hat{\rho}) = \rho^{low} + \rho^{high},\\
						u=\mathcal{F}^{-1}(\chi_{S(t)}\hat{u}) + \mathcal{F}^{-1}(\chi_{S^c(t)}\hat{u}) = u^{low} + u^{high}.
					\end{array}\right.
				\end{align}
				
				Then we have the following Lemma.
				\begin{lemm}\label{2lem1}
					Assume that $(\rho_0, u_0, g_0)$ satisfies the condition in
					Theorem \ref{th2}, then there exists a positive constant $\delta$ such that
					\begin{align}\label{8ineq0}
						&\|\Lambda^{s-1}u\|_{L^{4}}\|\nabla u\|_{L^{4}}\|\Lambda^{s+1}u\|_{L^2}~,~\|\Lambda^{s}u\|_{L^{4}}\|\nabla g\|_{L^4}\|\Lambda^{s}g\|_{L^2}\\ \notag
						& \lesssim \varepsilon\left(\|\Lambda^{s+1} u\|^{2}_{L^2}+\|\Lambda^sg\|^2_{L^2(\mathcal{L}^2)}\right) + (1+t)^{-1-\delta}\|\Lambda^{s}u^{low}\|^{2}_{L^2},~~~~~~~~~~~~~~~~~~~~~~~~~
					\end{align}
					and
					\begin{align}\label{8ineq1}
						&\|\nabla u\|_{L^{\infty}}\|\Lambda^{s}\rho\|^2_{L^2} ~,~\|\rho\|^2_{L^{\infty}}\|\Lambda^{s}\rho\|^2_{L^2}~,~\|\nabla\rho\|_{L^{4}}\|\Lambda^su\|_{L^{4}}\|\Lambda^{s}\rho\|_{L^2}~,\\ \notag
						&\|\Lambda^{s-1}\rho\|_{L^{4}}\|\nabla^2u\|_{L^{4}}\|\Lambda^{s+1}u\|_{L^2}~,~\|\Lambda^{s-1}\rho\|_{L^{4}}\|\nabla(\rho,\tau)\|_{L^{4}}\|\Lambda^{s+1}u\|_{L^2}\\ \notag
						& \lesssim \varepsilon\left(\|\Lambda^{s+1} u\|^{2}_{L^2}+\|\Lambda^sg\|^2_{L^2(\mathcal{L}^2)}\right) + (1+t)^{-1-\delta}\left( \|\Lambda^{s}\rho^{high}\|^{2}_{L^2}+\|\Lambda^{s}\rho^{low}\|^{2}_{L^2} \right).
					\end{align}
				\end{lemm}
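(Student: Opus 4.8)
The plan is to bound each of the listed products by the same mechanical procedure, so that both displays \eqref{8ineq0} and \eqref{8ineq1} reduce to a term-by-term check. I would use three ingredients: the decay already available from Proposition \ref{prop4}, namely $E_0(t)\lesssim(1+t)^{-1}$ and $E_1(t)\lesssim(1+t)^{-2}$ (hence, by interpolation, $\|\Lambda^\sigma(\rho,u)\|_{L^2}+\|\Lambda^\sigma g\|_{L^2(\mathcal{L}^2)}\lesssim(1+t)^{-1}$ for all $\sigma\in[1,s]$ and $\lesssim(1+t)^{-1/2}$ for $\sigma\in[0,s]$); the two-dimensional Gagliardo--Nirenberg inequality of Lemma \ref{Lemma2}, mostly in the form $\|f\|_{L^4}\lesssim\|f\|_{L^2}^{1/2}\|\nabla f\|_{L^2}^{1/2}$, $\|f\|_{L^\infty}\lesssim\|f\|_{L^2}^{1-1/s}\|\Lambda^s f\|_{L^2}^{1/s}$ and their $\Lambda^\sigma$-variants; and the smallness of the global energy ($\varepsilon\ll1$) together with the largeness of the splitting constant ($C_2\gg1$). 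I would first use Lemmas \ref{Lemma3} and \ref{Lemma4} to turn every occurrence of $\tau$, $qg$ or $\nabla_q\mathcal{U}g$ into a $\nabla_q g$-quantity --- in particular $\|\nabla\tau\|_{L^4}\lesssim\|\nabla_x g\|^{1/2}_{L^2(\mathcal{L}^2)}\|\nabla_x^2 g\|^{1/2}_{L^2(\mathcal{L}^2)}$ and $\|\Lambda^s g\|_{L^2(\mathcal{L}^2)}\lesssim\|\Lambda^s\nabla_q g\|_{L^2(\mathcal{L}^2)}$ --- so that $\|\Lambda^s g\|_{L^2(\mathcal{L}^2)}$ may be handled on the same footing as the dissipation factor $\|\Lambda^{s+1}u\|_{L^2}$.

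For a generic term the template is: apply Gagliardo--Nirenberg to distribute the $x$-derivatives so that the product takes the shape (decaying low-order factors)$\,\times\,\|\Lambda^{s+1}u\|_{L^2}^{\theta}\times X$ with $\theta<2$ and $X$ either bounded or one of $\|\Lambda^s\rho\|_{L^2}$, $\|\Lambda^s u\|_{L^2}$, $\|\Lambda^s g\|_{L^2(\mathcal{L}^2)}$ (the only factors not controlled pointwise being $\|\Lambda^{s+1}u\|_{L^2}$ and $\|\Lambda^s g\|_{L^2(\mathcal{L}^2)}$); estimate each low-order Sobolev norm by its Proposition \ref{prop4} rate; use Young's inequality to peel off $\varepsilon\|\Lambda^{s+1}u\|^2_{L^2}+\varepsilon\|\Lambda^s g\|^2_{L^2(\mathcal{L}^2)}$; this leaves a remainder which, after one more application of Young's inequality to the pieces, is a time-weighted multiple $(1+t)^{-1-\delta}$ of $\|\Lambda^s\rho\|^2_{L^2}$, $\|\Lambda^s u\|^2_{L^2}$ or $\|\Lambda^s g\|^2_{L^2(\mathcal{L}^2)}$ (plus, for some terms, a harmless pure power $(1+t)^{-N}$), for some $\delta=\delta(s)>0$. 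Finally perform the frequency decomposition: when $X^2=\|\Lambda^s u\|^2_{L^2}$, split $u=u^{low}+u^{high}$ and use that on $S^c(t)$ one has $\|\Lambda^s u^{high}\|^2_{L^2}\le C_2^{-1}(1+t)\|\Lambda^{s+1}u\|^2_{L^2}$, so the high part contributes $C_2^{-1}(1+t)^{-\delta}\|\Lambda^{s+1}u\|^2_{L^2}\le\varepsilon\|\Lambda^{s+1}u\|^2_{L^2}$ (since $C_2^{-1}\le\varepsilon$ and $(1+t)^{-\delta}\le1$) while the low part stays as $(1+t)^{-1-\delta}\|\Lambda^s u^{low}\|^2_{L^2}$; when $X^2=\|\Lambda^s\rho\|^2_{L^2}$ one simply writes $\|\Lambda^s\rho\|^2_{L^2}=\|\Lambda^s\rho^{high}\|^2_{L^2}+\|\Lambda^s\rho^{low}\|^2_{L^2}$, which is already an admissible right-hand side in \eqref{8ineq1}. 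Concretely, $\|\Lambda^{s-1}u\|_{L^4}\|\nabla u\|_{L^4}\|\Lambda^{s+1}u\|_{L^2}\lesssim(1+t)^{-3/2}\|\Lambda^s u\|_{L^2}^{1/2}\|\Lambda^{s+1}u\|_{L^2}$ and then the template; $\|\nabla u\|_{L^\infty}\|\Lambda^s\rho\|^2_{L^2}$, $\|\nabla\rho\|_{L^4}\|\Lambda^s u\|_{L^4}\|\Lambda^s\rho\|_{L^2}$ and $\|\Lambda^{s-1}\rho\|_{L^4}\|\nabla^2u\|_{L^4}\|\Lambda^{s+1}u\|_{L^2}$ likewise; and $\|\rho\|^2_{L^\infty}\|\Lambda^s\rho\|^2_{L^2}$ needs no Young step, since $\|\rho\|^2_{L^\infty}\lesssim(1+t)^{-1-1/s}$ directly.

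The step I expect to be the real obstacle is ensuring that every residual time exponent is \emph{strictly} larger than $1$, i.e.\ that a genuine $\delta>0$ can be extracted in each term. This is impossible using the $L^2$ decay $E_0\lesssim(1+t)^{-1}$ alone; one must spend the sharper gradient decay $E_1\lesssim(1+t)^{-2}$ --- equivalently the $(1+t)^{-1}$ rate for $\|\Lambda^\sigma(\rho,u)\|_{L^2}$ with $\sigma\ge1$ --- on the low-order factors, and this is precisely where the hypothesis $s>2$ is used: it guarantees that after the Gagliardo--Nirenberg splitting at least one surviving $L^2$ factor has order strictly between $1$ and $s$ and hence decays like $(1+t)^{-1}$. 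The tightest terms are $\|\nabla\rho\|_{L^4}\|\Lambda^s u\|_{L^4}\|\Lambda^s\rho\|_{L^2}$ and the $\tau$-contribution to $\|\Lambda^{s-1}\rho\|_{L^4}\|\nabla(\rho,\tau)\|_{L^4}\|\Lambda^{s+1}u\|_{L^2}$, where the decay budget is nearly exhausted: there one must avoid applying Young's inequality greedily, keeping one full power of $\|\Lambda^s\rho\|^2_{L^2}$ (respectively of the $g$-dissipation) intact and feeding only the leftover decay into the $(1+t)^{-1-\delta}$ prefactor. Once these two are disposed of, \eqref{8ineq0} and \eqref{8ineq1} follow by collecting the term-by-term bounds.
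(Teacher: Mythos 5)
Your proposal follows the same route as the paper: Gagliardo--Nirenberg to redistribute $x$-derivatives, the decay rates $E_0\lesssim(1+t)^{-1}$ and $E_1\lesssim(1+t)^{-2}$ from Proposition \ref{prop4} (hence $\|\Lambda^\sigma(\rho,u)\|_{L^2}\lesssim(1+t)^{-1}$ for $\sigma\in[1,s]$), Lemmas \ref{Lemma3}--\ref{Lemma4} to reduce $\tau$ to $\nabla_q g$, Young's inequality to peel off $\varepsilon$-multiples of $\|\Lambda^{s+1}u\|^2_{L^2}$ and $\|\Lambda^s g\|^2_{L^2(\mathcal L^2)}$, and the split $\|\Lambda^s u\|^2=\|\Lambda^s u^{low}\|^2+\|\Lambda^s u^{high}\|^2$ with absorption of the high part via $\|\Lambda^s u^{high}\|^2_{L^2}\le C_2^{-1}(1+t)\|\Lambda^{s+1}u\|^2_{L^2}$. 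You also correctly identify that the strict gain $\delta>0$ is won by spending the gradient decay $E_1\lesssim(1+t)^{-2}$, and that $s>1+\tfrac d2=2$ is what makes the interpolation exponents work out.

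One caution about your sample computation. Writing $\|\Lambda^{s-1}u\|_{L^4}\|\nabla u\|_{L^4}\|\Lambda^{s+1}u\|_{L^2}\lesssim(1+t)^{-3/2}\|\Lambda^s u\|_{L^2}^{1/2}\|\Lambda^{s+1}u\|_{L^2}$ leaves an odd power $\|\Lambda^s u\|^{1/2}_{L^2}$, so a second Young step to square it necessarily introduces a floating pure power $(1+t)^{-N}$; this is not an admissible summand on the right-hand side of \eqref{8ineq0}--\eqref{8ineq1}, whose only terms besides $\varepsilon\,(\text{dissipation})$ are $(1+t)^{-1-\delta}\|\Lambda^s u^{low}\|^2$ and $(1+t)^{-1-\delta}(\|\Lambda^s\rho^{high}\|^2+\|\Lambda^s\rho^{low}\|^2)$, which cannot dominate a free constant times $(1+t)^{-N}$. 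The paper avoids this by choosing the balanced interpolation $\|\Lambda^{s-1}u\|_{L^4}\|\nabla u\|_{L^4}\lesssim\|\nabla u\|_{L^2}\|\Lambda^s u\|_{L^2}$, so that after one Young step the remainder is exactly $\|\nabla u\|_{L^2}^2\|\Lambda^s u\|^2_{L^2}\lesssim(1+t)^{-2}\|\Lambda^s u\|^2_{L^2}$, an even power of the highest norm. The same care is taken for every other term (see the exponents chosen in \eqref{8ineq2}--\eqref{8ineq9}), which is consistent with your own remark about not applying Young greedily on the tight terms --- your general template is right, but that particular concrete split would need to be rebalanced.
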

			\begin{proof}
				According to Lemma \ref{Lemma2} and Proposition \ref{prop4}, we deduce that
				\begin{align}\label{8ineq2}
					\|\rho\|^2_{L^{\infty}}\|\Lambda^{s}\rho\|^2_{L^2} &\lesssim \|\rho\|^{2-\frac{2}{s+1}}_{L^2}\|\Lambda^{s}\rho\|^{2+\frac{2}{s+1}}_{L^2}\\ \notag
					& \lesssim (1+t)^{-1-\frac{1}{s+1}}\left(\|\Lambda^{s}\rho^{high}\|^{2}_{L^2}+\|\Lambda^{s}\rho^{low}\|^{2}_{L^2}\right),~~~~~~~~~~~~~~~~~~~~~
				\end{align}
				and
				\begin{align}\label{8ineq3}
					\|\nabla u\|_{L^{\infty}}\|\Lambda^{s}\rho\|^2_{L^2} &\lesssim \|\nabla u\|^{1-\frac{1}{s}}_{L^2}\|\Lambda^{s+1} u\|^{\frac{1}{s}}_{L^2}\|\Lambda^{s}\rho\|^{\frac{2s-1}{s}+\frac{1}{s}}_{L^2}\\ \notag
					& \lesssim \varepsilon\|\Lambda^{s+1} u\|^{2}_{L^2} + \|\nabla u\|^{\frac{2(s-1)}{2s-1}}_{L^2}\|\Lambda^{s}\rho\|^{\frac{2}{2s-1}+2}_{L^2}\\ \notag
					& \lesssim (1+t)^{-1-\frac{1}{2s-1}}\left(\|\Lambda^{s}\rho^{high}\|^{2}_{L^2}+\|\Lambda^{s}\rho^{low}\|^{2}_{L^2}\right)  + \varepsilon\|\Lambda^{s+1} u\|^{2}_{L^2}.
				\end{align}
				By virtue of Lemmas \ref{Lemma2}, \ref{Lemma4} and Proposition \ref{prop4}, we infer that
				\begin{align}\label{8ineq4}
					\|\Lambda^{s-1}\rho\|_{L^{4}}\|\nabla\tau\|_{L^{4}}\|\Lambda^{s+1}u\|_{L^2}
					&\lesssim \|\nabla\rho\|^{\frac{1}{2(s-1)}}_{L^{2}}\|\Lambda^s\rho\|^{\frac{2s-3}{2(s-1)}}_{L^{2}}\|\nabla\tau\|^{\frac{2s-3}{2(s-1)}}_{L^{2}}\|\Lambda^s\tau\|^{\frac{1}{2(s-1)}}_{L^{2}}\|\Lambda^{s+1}u\|_{L^2} \\ \notag
					& \lesssim \varepsilon\left(\|\Lambda^{s+1}u\|^2_{L^2}+\|\Lambda^sg\|^2_{L^2(\mathcal{L}^2)}\right) + \|\nabla\rho\|^{\frac{2}{2s-3}}_{L^{2}}\|\nabla g\|^{2}_{L^{2}(\mathcal{L}^2)}\|\Lambda^s\rho\|^{2}_{L^{2}} \\ \notag
					& \lesssim (1+t)^{-2-\frac{2}{2s-3}}\left(\|\Lambda^{s}\rho^{high}\|^{2}_{L^2}+\|\Lambda^{s}\rho^{low}\|^{2}_{L^2}\right) \\ \notag
					&~~~~+ \varepsilon\left(\|\Lambda^{s+1}u\|^2_{L^2}+\|\Lambda^s\nabla_qg\|^2_{L^2(\mathcal{L}^2)}\right),
				\end{align}
				and
				\begin{align}\label{8ineq5}
					\|\nabla\rho\|_{L^{4}}\|\Lambda^su\|_{L^{4}}\|\Lambda^{s}\rho\|_{L^2} &\lesssim \|\nabla\rho\|^{\frac{2s-3}{2(s-1)}}_{L^{2}}\|\Lambda^s\rho\|^{\frac{2s+3}{2(s+1)}+\frac{1}{s^2-1}}_{L^{2}}\| u\|^{\frac{1}{2(s+1)}}_{L^{2}}\|\Lambda^{s+1}u\|^{\frac{2s+1}{2(s+1)}}_{L^{2}}\\ \notag
					&\lesssim \varepsilon\|\Lambda^{s+1} u\|^{2}_{L^2} + \|\nabla\rho\|^{\frac{2(2s-3)(s+1)}{(2s+3)(s-1)}}_{L^{2}}\| u\|^{\frac{2}{2s+3}}_{L^{2}}\|\Lambda^s\rho\|^{\frac{4}{(2s+3)(s-1)}+2}_{L^{2}}\\ \notag
					& \lesssim \varepsilon\|\Lambda^{s+1} u\|^{2}_{L^2} + C(1+t)^{-1-\frac{2s}{2s+3}}\left(\|\Lambda^{s}\rho^{high}\|^{2}_{L^2}+\|\Lambda^{s}\rho^{low}\|^{2}_{L^2}\right).~~~~~
				\end{align}
				According to Lemma \ref{Lemma2} and Proposition \ref{prop4}, one can arrive at
				\begin{align}\label{8ineq6}
					\|\Lambda^{s-1}\rho\|_{L^{4}}\|\nabla^2u\|_{L^{4}}\|\Lambda^{s+1}u\|_{L^2}&\lesssim \|\rho\|^{\frac{1}{2s}}_{L^2}\|\Lambda^s\rho\|^{\frac{1}{s}+\frac{2s-3}{2s}}_{L^2}\|\nabla u\|^{\frac{2s-3}{2s}}_{L^2}\|\Lambda^{s+1}u\|^{\frac{2s+3}{2s}}_{L^2}\\ \notag
					&\lesssim \varepsilon\|\Lambda^{s+1}u\|^2_{L^2} + \|\rho\|^{\frac{2}{2s-3}}_{L^2}\|\nabla u\|^{2}_{L^2}\|\Lambda^s\rho\|^{\frac{4}{2s-3}+2}_{L^2}\\ \notag
					& \lesssim \varepsilon\|\Lambda^{s+1} u\|^{2}_{L^2} + C(1+t)^{-1-\frac{2s+2}{2s-3}}\left(\|\Lambda^{s}\rho^{high}\|^{2}_{L^2}+\|\Lambda^{s}\rho^{low}\|^{2}_{L^2}\right),
				\end{align}
			and
				\begin{align}\label{8ineq7}
					\|\Lambda^{s-1}\rho\|_{L^{4}}\|\nabla\rho\|_{L^{4}}\|\Lambda^{s+1}u\|_{L^2}&\lesssim \|\rho\|^{1-\frac{1}{s}}_{L^{2}}\|\Lambda^s\rho\|^{1+\frac{1}{s}}_{L^{2}}\|\Lambda^{s+1}u\|_{L^2} \\ \notag
					& \lesssim \varepsilon\|\Lambda^{s+1}u\|^2_{L^2} + \|\rho\|^{2-\frac{2}{s}}_{L^{2}}\|\Lambda^s\rho\|^{\frac{2}{s}+2}_{L^{2}} \\ \notag
					& \lesssim \varepsilon\|\Lambda^{s+1} u\|^{2}_{L^2} + C(1+t)^{-1-\frac{1}{s}}\left(\|\Lambda^{s}\rho^{high}\|^{2}_{L^2}+\|\Lambda^{s}\rho^{low}\|^{2}_{L^2}\right).~~~~
				\end{align}
			Analogously,
				\begin{align}\label{8ineq8}
					\|\Lambda^{s-1}u\|_{L^{4}}\|\nabla u\|_{L^{4}}\|\Lambda^{s+1}u\|_{L^2} &\lesssim \|\nabla u\|_{L^2}\|\Lambda^su\|_{L^2}\|\Lambda^{s+1}u\|_{L^2}\\ \notag
					& \lesssim \varepsilon\|\Lambda^{s+1} u\|^{2}_{L^2} + C(1+t)^{-2}\|\Lambda^{s}u^{low}\|^{2}_{L^2},~~~~~~~~~~~~~~~~~~~~~~~~~~~~~~~~~~
				\end{align}
				and
				\begin{align}\label{8ineq9}
					\|\Lambda^{s}u\|_{L^{4}}\|\nabla g\|_{L^4}\|\Lambda^{s}g\|_{L^2} &\lesssim \|\nabla g\|^{\frac{2s-3}{2(s-1)}}_{L^2}\|\Lambda^sg\|^{\frac{1}{2(s-1)}+1}_{L^2}\|\Lambda^{s}u\|^{\frac{1}{2}}_{L^{2}}\|\Lambda^{s+1}u\|^{\frac{1}{2}}_{L^2} \\ \notag
					&\lesssim \varepsilon\|\Lambda^{s+1}u\|^2_{L^2}+C\|\nabla g\|^{\frac{4s-6}{3(s-1)}}_{L^2}\|\Lambda^sg\|^{\frac{2}{3(s-1)}+\frac{4}{3}}_{L^2}\|\Lambda^{s}u\|^{\frac{2}{3}}_{L^{2}} \\ \notag
					& \lesssim \varepsilon\left(\|\Lambda^{s+1}u\|^2_{L^2} + \|\Lambda^sg\|^2_{L^2(\mathcal{L}^2)} \right) + C(1+t)^{-\frac{4}{3}}\|\Lambda^{s}u^{low}\|^{2}_{L^2}.~~~~
				\end{align}
				We thus complete the proof of Lemma \ref{2lem1} by taking $\delta=\min\left\{\frac{1}{3},\frac{1}{2s-1},\frac{1}{s+1},\frac{1}{s}\right\}$. 
			\end{proof}
				
				A critical Fourier splitting estimate is established in the following to obtain the optimal time decay rate in the highest order derivative.
				\begin{lemm}\label{2lem2}
					Assume that $(\rho_0, u_0, \tau_0)$ satisfies the condition in
					Theorem \ref{th2} and denote $\sigma_R = \{j\in\mathbb{N}|{\rm supp}\{\varphi(2^{-j}\xi)\}\cap S^c(R)\neq\varnothing\}$, then for any $R\in[0,\infty)$ and $\eta$ sufficiently small, it holds that
					\begin{align}\label{9ineq0}
						&\frac{d}{dt}\big(\|\Lambda^s (\rho,u)\|^2_{L^2}+\|\Lambda^s g\|^2_{L^2(\mathcal{L}^2)}+\frac{C_2\eta}{(1+t)\ln^2(e+t)}\sum_{j\in\sigma_R}\langle\Lambda^{s}\dot{\Delta}_j\rho,\Lambda^{s-1} \dot{\Delta}_ju\rangle \big)\\ \notag
						& + \frac{C_2\eta\gamma}{(1+t)\ln^2(e+t)}\int_{S^c(R)}|\xi|^{2s}|\widehat{\rho}|^2d\xi + \|\Lambda^{s + 1} u\|^2_{L^2}+ \|\Lambda^{s} g\|^2_{L^2(\mathcal{L}^2)} \\ \notag
						&\lesssim (1+t)^{-1-\delta}\int_{S(t)}|\xi|^{2s}(|\widehat{\rho}|^2 + |\widehat{u}|^2)d\xi + (1+t)^{-1-\delta}\int_{S^c(t)} |\xi|^{2s}|\widehat{\rho}|^2d\xi\\ \notag
						&~~~~+\frac{\eta}{(1+t)^2\ln^2(e+t)}\sum_{j\in\sigma_R}\langle\Lambda^{s}\dot{\Delta}_j\rho,\Lambda^{s-1} \dot{\Delta}_ju\rangle\\ \notag
						&~~~~+\frac{\eta}{(1+t)\ln^2(e+t)}\sum_{j\in\sigma_R}\|\Lambda^{s}\dot{\Delta}_ju\|^2_{L^2}.
					\end{align}
					for some positive constant $\delta$.
				\end{lemm}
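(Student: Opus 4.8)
\emph{Step 1: the top-order energy identity.} The plan is a modified $\dot H^s$ estimate: superimpose on the standard top-order energy identity for \eqref{eq1} a Littlewood--Paley--localized hypocoercivity correction that is switched on only on the high-frequency set $S^c(R)$ and damped by the critical time weight $(1+t)^{-1}\ln^{-2}(e+t)$; this is what supplies the otherwise-missing dissipation of $\rho$ at order $s$. Concretely, apply $\Lambda^s$ to the three equations of \eqref{eq1}, pair with $\Lambda^s\rho$ and $\Lambda^s u$ in $L^2$ and with $\Lambda^s g$ in $L^2(\mathcal L^2)$, and add. The linearized pressure coupling $\gamma\langle\Lambda^s\nabla\rho,\Lambda^s u\rangle$ cancels the term $\gamma\langle\Lambda^s{\rm div}\,u,\Lambda^s\rho\rangle$ from the continuity equation, and the stress coupling $\langle\Lambda^s{\rm div}\,\tau,\Lambda^s u\rangle$ cancels $\langle\Lambda^s(\nabla u\,q\,\nabla_q\mathcal U),\Lambda^s g\rangle_{L^2(\mathcal L^2)}$ by the definition of $\tau(g)$ (the standard micro--macro antisymmetry). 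What remains is
\[
\frac12\frac{d}{dt}\bigl(\|\Lambda^s(\rho,u)\|^2_{L^2}+\|\Lambda^s g\|^2_{L^2(\mathcal L^2)}\bigr)+\mu\|\Lambda^{s+1}u\|^2_{L^2}+(\mu+\mu')\|\Lambda^s{\rm div}\,u\|^2_{L^2}+\|\Lambda^s\nabla_q g\|^2_{L^2(\mathcal L^2)}\lesssim \mathcal N ,
\]
where $\mathcal N$ collects the nonlinear terms $u\cdot\nabla\rho$, $u\cdot\nabla u$, $u\cdot\nabla g$, the variable-coefficient corrections $\frac{\rho}{1+\rho}{\rm div}\,\Sigma(u)$, $[\gamma-\frac{P'(1+\rho)}{1+\rho}]\nabla\rho$, $\frac{\rho}{1+\rho}{\rm div}\,\tau$, the term $\rho\,{\rm div}\,u$ from the continuity equation, and $\frac{1}{\psi_\infty}\nabla_q\cdot(\nabla u\,q\,g\,\psi_\infty)$; by Lemma \ref{Lemma4}, $\|\Lambda^s\nabla_q g\|^2_{L^2(\mathcal L^2)}$ dominates the dissipation $\|\Lambda^s g\|^2_{L^2(\mathcal L^2)}$ appearing in \eqref{9ineq0}.

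\emph{Step 2: closing the nonlinear terms.} Every term of $\mathcal N$ is treated by the commutator estimate (Lemma \ref{Lemma5}), Gagliardo--Nirenberg (Lemma \ref{Lemma2}), Lemmas \ref{Lemma3}--\ref{Lemma4} to trade $\tau$ and $g$ for $\nabla_q g$, and the $L^2$ decay $\|(\rho,u)\|_{H^s}+\|g\|_{H^s(\mathcal L^2)}\lesssim(1+t)^{-1/2}$, $\|\Lambda^1(\rho,u)\|_{H^{s-1}}\lesssim(1+t)^{-1}$ of Proposition \ref{prop4}. After this reduction $\mathcal N$ is a finite sum of exactly the quantities estimated in Lemma \ref{2lem1} (for instance $\|\Lambda^{s-1}u\|_{L^4}\|\nabla u\|_{L^4}\|\Lambda^{s+1}u\|_{L^2}$, $\|\nabla u\|_{L^\infty}\|\Lambda^s\rho\|^2_{L^2}$, $\|\Lambda^{s-1}\rho\|_{L^4}\|\nabla(\rho,\tau)\|_{L^4}\|\Lambda^{s+1}u\|_{L^2}$, $\|\Lambda^s u\|_{L^4}\|\nabla g\|_{L^4}\|\Lambda^s g\|_{L^2}$). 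Invoking Lemma \ref{2lem1}, the contributions $\varepsilon(\|\Lambda^{s+1}u\|^2_{L^2}+\|\Lambda^s g\|^2_{L^2(\mathcal L^2)})$ are absorbed into the left-hand dissipation (legitimate since the total energy is $\lesssim\varepsilon$), and what is left on the right is only $(1+t)^{-1-\delta}(\|\Lambda^s\rho^{high}\|^2_{L^2}+\|\Lambda^s\rho^{low}\|^2_{L^2}+\|\Lambda^s u^{low}\|^2_{L^2})$, which by the frequency decomposition is exactly the first two integrals on the right-hand side of \eqref{9ineq0}. This argument, however, furnishes no dissipation of $\rho$ at order $s$.

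\emph{Step 3: the high-frequency dissipation of $\rho$.} For $j\in\sigma_R$ differentiate $\langle\Lambda^s\dot\Delta_j\rho,\Lambda^{s-1}\dot\Delta_j u\rangle$ in time using the $\dot\Delta_j$-localized continuity and momentum equations. The pressure term $-\gamma\nabla\rho$ contributes $-\gamma\|\Lambda^s\dot\Delta_j\rho\|^2_{L^2}$, which after summation over $j\in\sigma_R$ and the partition-of-unity bounds of Proposition \ref{prop0} controls $\int_{S^c(R)}|\xi|^{2s}|\widehat\rho|^2 d\xi$ from below; the term $-{\rm div}\,u$ in the continuity equation contributes $+\|\Lambda^s\dot\Delta_j u\|^2_{L^2}$, i.e.\ precisely the term $\frac{\eta}{(1+t)\ln^2(e+t)}\sum_{j\in\sigma_R}\|\Lambda^s\dot\Delta_j u\|^2_{L^2}$ in \eqref{9ineq0}; the viscous part $\mu\Delta u$ gives $\lesssim\|\Lambda^{s+1}\dot\Delta_j u\|_{L^2}\|\Lambda^s\dot\Delta_j\rho\|_{L^2}$, and the convective, stress and variable-coefficient remainders are bounded by Cauchy--Schwarz together with the decay of Proposition \ref{prop4}. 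Multiplying this identity by the weight $\frac{C_2\eta}{(1+t)\ln^2(e+t)}$, summing over $j\in\sigma_R$, and forming $\frac{d}{dt}\bigl(\frac{C_2\eta}{(1+t)\ln^2(e+t)}\sum_{j\in\sigma_R}\langle\Lambda^s\dot\Delta_j\rho,\Lambda^{s-1}\dot\Delta_j u\rangle\bigr)$ — whose time derivative hitting the weight produces the error $\frac{\eta}{(1+t)^2\ln^2(e+t)}\sum_{j\in\sigma_R}\langle\cdots\rangle$ plus a faster-decaying $O\bigl((1+t)^{-2}\ln^{-3}(e+t)\bigr)$ piece — one Young-splits each remainder with a coefficient fixed in terms of $\gamma$ and then chooses $\eta$ small: the $u$-remainder is absorbed into $\|\Lambda^{s+1}u\|^2_{L^2}$ using $\frac{1}{1+t}\|\Lambda^s\dot\Delta_j u\|^2_{L^2}\lesssim\|\Lambda^{s+1}\dot\Delta_j u\|^2_{L^2}$ (valid since $|\xi|^2\gtrsim\frac{C_2}{1+t}$ on the relevant blocks), the $\rho$-remainder into the freshly gained $\frac{C_2\eta\gamma}{(1+t)\ln^2(e+t)}\int_{S^c(R)}|\xi|^{2s}|\widehat\rho|^2 d\xi$, and the $g$-remainder into $\varepsilon\|\Lambda^s g\|^2_{L^2(\mathcal L^2)}$, the surviving pieces being of size $(1+t)^{-1-\delta}(\|\Lambda^s\rho^{high}\|^2_{L^2}+\|\Lambda^s\rho^{low}\|^2_{L^2})$. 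Adding this to the output of Step 2 yields \eqref{9ineq0}.

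\emph{Main obstacle.} The delicate point is the calibration of the weight $\frac{C_2}{(1+t)\ln^2(e+t)}$: it must be small enough — through the factors $(1+t)^{-1}$, $\ln^{-2}(e+t)$ and $\eta$ — that the $\|\Lambda^s u^{high}\|^2$ term it generates from ${\rm div}\,u$ remains harmless (it is reabsorbed later via $(1+t)^{-1}\|\Lambda^s u^{high}\|^2_{L^2}\lesssim\|\Lambda^{s+1}u\|^2_{L^2}$) and the error $\frac{\eta}{(1+t)^2\ln^2(e+t)}\sum_{j\in\sigma_R}\langle\cdots\rangle$ does not spoil the $\dot H^s$ decay, yet large enough that the recovered dissipation $\frac{C_2\eta\gamma}{(1+t)\ln^2(e+t)}\int_{S^c(R)}|\xi|^{2s}|\widehat\rho|^2 d\xi$ strictly dominates the viscous and coupling remainders of size $\frac{\eta^2}{(1+t)^2\ln^4(e+t)}\|\Lambda^s\rho^{high}\|^2_{L^2}$ uniformly in $t\ge0$ and in $R$; as the second Remark stresses, a weight $(1+t)^{-\alpha}$ with $\alpha\in(0,1)$ breaks this balance, so the choice $\alpha=1$ with the $\ln^{-2}$ correction is essentially forced. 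A secondary difficulty is the precise matching of every commutator and product remainder in Step 2 with the list in Lemma \ref{2lem1}, and the passage between $\sum_{j\in\sigma_R}\|\dot\Delta_j\,\cdot\,\|^2_{L^2}$ and $\int_{S^c(R)}\cdots\, d\xi$ via Proposition \ref{prop0}.
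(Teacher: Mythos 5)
Your proposal reproduces the paper's own argument for Lemma \ref{2lem2}: a standard $\Lambda^s$-energy estimate supplying dissipation of $u$ and $g$ only, nonlinear terms closed exactly via Lemma \ref{2lem1}, and the missing top-order dissipation of $\rho$ recovered from the Littlewood--Paley--localized cross term $\langle\Lambda^s\dot\Delta_j\rho,\Lambda^{s-1}\dot\Delta_j u\rangle$ summed over $j\in\sigma_R$ and multiplied by the critical weight $\frac{C_2\eta}{(1+t)\ln^2(e+t)}$, whose differentiation in time generates precisely the two remainder terms on the right of \eqref{9ineq0}. The only slight imprecision is that in Step~3 you describe the $u$-remainder $\frac{\eta}{(1+t)\ln^2(e+t)}\sum_{j\in\sigma_R}\|\Lambda^s\dot\Delta_j u\|^2_{L^2}$ as being absorbed into $\|\Lambda^{s+1}u\|^2_{L^2}$ already within the lemma, whereas --- as you correctly observe under ``Main obstacle'' and as the paper does --- it is kept on the right-hand side of \eqref{9ineq0} and reabsorbed via the frequency-size inequality only in the subsequent proof of Theorem~\ref{th2}.
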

			\begin{proof}
			\textbf{Dissipation of $u$ and $g$:}\\
					Applying $\Lambda^s$ to $(\ref{eq1})_1$ and taking inner product with $\Lambda^{s}\rho$, we obtain
				\begin{align}\label{9ineq1}
					\frac{d}{dt}\|\Lambda^s\rho\|^2_{L^2} -\langle\Lambda^su,\Lambda^{s+1}\rho\rangle = \langle\Lambda^{s}(\rho {\rm div}~u),\Lambda^{s}\rho\rangle + \langle\Lambda^{s}(u\cdot\nabla\rho),\Lambda^{s}\rho\rangle.
				\end{align}
				According to Lemma \ref{Lemma2} and Proposition \ref{prop4}, we infer that
				\begin{align}\label{9ineq2}
					\langle\Lambda^{s}(\rho {\rm div}~u),\Lambda^{s}\rho\rangle &\lesssim \left(\|{\rm div}~u\|_{L^{\infty}}\|\Lambda^{s}\rho\|_{L^2} + \|\rho\|_{L^{\infty}}\|\Lambda^{s+1}u\|_{L^2}\right)\|\Lambda^{s}\rho\|_{L^2} \\ \notag
					&\lesssim \varepsilon\|\Lambda^{s+1}u\|^2_{L^2} + (1+t)^{-1-\delta}\left(\|\Lambda^{s}\rho^{high}\|^2_{L^2}+\|\Lambda^{s}\rho^{low}\|^2_{L^2}\right).
				\end{align}
				By virtue of Lemmas \ref{Lemma5} and \ref{2lem2} , we deduce that
				\begin{align}\label{9ineq3}
					\|[\Lambda^{s},u\cdot\nabla]\rho\|_{L^2}\|\Lambda^{s}\rho\|_{L^2} &\lesssim \|\Lambda^su\|_{L^{4}}\|\nabla\rho\|_{L^{4}}\|\Lambda^{s}\rho\|_{L^2} + \|\nabla u\|_{\infty}\|\Lambda^s\rho\|^2_{L^2} \\ \notag
					&\lesssim \varepsilon\|\Lambda^{s+1}u\|^2_{L^2} + (1+t)^{-1-\delta}\left(\|\Lambda^{s}\rho^{high}\|^2_{L^2}+\|\Lambda^{s}\rho^{low}\|^2_{L^2}\right) ,
				\end{align}
				which implies
				\begin{align}\label{9ineq4}
					\langle\Lambda^{s}u\cdot\nabla\rho,\Lambda^{s}\rho\rangle &= \langle u\cdot\nabla\Lambda^{s}\rho,\Lambda^{s}\rho\rangle + \langle[\Lambda^{s},u\cdot\nabla]\rho,\Lambda^{s}\rho\rangle \\ \notag
					&\lesssim \varepsilon\|\Lambda^{s+1}u\|^2_{L^2} + (1+t)^{-1-\delta}\left(\|\Lambda^{s}\rho^{high}\|^2_{L^2}+\|\Lambda^{s}\rho^{low}\|^2_{L^2}\right).
				\end{align}
				Combining estimates \eqref{9ineq1}, \eqref{9ineq2} and \eqref{9ineq4} , we conclude that
				\begin{align}\label{9ineq5}
					\frac{d}{dt}\|\Lambda^s\rho\|^2_{L^2} &-\langle\Lambda^su,\Lambda^{s+1}\rho\rangle \\ \notag
					&\lesssim \varepsilon\|\Lambda^{s+1}u\|^2_{L^2} + (1+t)^{-1-\delta}\left(\|\Lambda^{s}\rho^{high}\|^2_{L^2}+\|\Lambda^{s}\rho^{low}\|^2_{L^2}\right).
				\end{align}
				Similarly, applying $\Lambda^s$ to $(\ref{eq1})_2$ and taking inner product with $\Lambda^{s}u$, we obtain
				\begin{align}\label{9ineq6}
					\frac{d}{dt}\|\Lambda^s u\|^2_{L^2} &+\gamma \langle\Lambda^su,\Lambda^{s+1}\rho\rangle + \|\Lambda^{s+1}u\|^2_{L^2} \\ \notag
					&- \langle\Lambda^s {\rm div}~\tau,\Lambda^{s}u\rangle
					= \langle\Lambda^{s}G,\Lambda^{s}u\rangle,
				\end{align}
				with
				\begin{align}\label{9ineq7}
					\langle\Lambda^{s}G,\Lambda^{s}u\rangle
					& = \langle\Lambda^{s}\left( \frac{\rho}{1+\rho}{\rm div}~\Sigma u + [h(\rho)-\gamma]\nabla\rho\right),\Lambda^{s}u\rangle \\ \notag
					&~~~~+\langle\Lambda^{s}\left( \frac{\rho}{1+\rho}{\rm div}~\tau + u\nabla u \right) ,\Lambda^{s}u\rangle.
				\end{align}
				According to Lemma \ref{2lem2} and Proposition \ref{prop4}, we infer that
				\begin{align}\label{9ineq8}
					\langle\Lambda^{s}\left(\frac{\rho}{1+\rho}{\rm div}~\Sigma u\right),\Lambda^{s}u\rangle &\leq \|\Lambda^{s-1}\left(\frac{\rho}{1+\rho}{\rm div}~\Sigma u\right)\|_{L^2}\|\Lambda^{s+1}u\|_{L^2} \\ \notag
					& \lesssim \left(\|\Lambda^{s-1}\rho\|_{L^{4}}\|\nabla^2u\|_{L^{4}} + \|\rho\|_{L^{\infty}}\|\Lambda^{s+1}u\|_{L^2}\right)\|\Lambda^{s+1}u\|_{L^2} \\ \notag
					&\lesssim \varepsilon\|\Lambda^{s+1}u\|^2_{L^2} + (1+t)^{-1-\delta}\left(\|\Lambda^{s}\rho^{high}\|^2_{L^2}+\|\Lambda^{s}\rho^{low}\|^2_{L^2}\right),
				\end{align}
				and
				\begin{align}\label{9ineq9}
					\langle\Lambda^{s}([h(\rho)-\gamma]\nabla\rho),\Lambda^{s}u\rangle &\leq \|\Lambda^{s-1}([h(\rho)-\gamma]\nabla\rho)\|_{L^2}\|\Lambda^{s+1}u\|_{L^2}\\ \notag
					& \lesssim \left(\|\Lambda^{s-1}\rho\|_{L^{4}}\|\nabla\rho\|_{L^{4}} + \|\rho\|_{L^{\infty}}\|\Lambda^{s}\rho\|_{L^2}\right)\|\Lambda^{s+1}u\|_{L^2} \\ \notag
					&\lesssim \varepsilon\|\Lambda^{s+1}u\|^2_{L^2} + (1+t)^{-1-\delta}\left(\|\Lambda^{s}\rho^{high}\|^2_{L^2}+\|\Lambda^{s}\rho^{low}\|^2_{L^2}\right).
				\end{align}
                Analogously, we have
				\begin{align}\label{9ineq10}
					\langle\Lambda^{s}\left(\frac{\rho}{1+\rho}{\rm div}~\tau\right),\Lambda^{s}u\rangle 
					&\leq \|\Lambda^{s-1}\left(\frac{\rho}{1+\rho}{\rm div}~\tau\right)\|_{L^2}\|\Lambda^{s+1}u\|_{L^2}\\ \notag
					& \lesssim \left(\|\Lambda^{s-1}\rho\|_{L^{4}}\|\nabla\tau\|_{L^{4}} + \|\rho\|_{L^{\infty}}\|\Lambda^{s}\tau\|_{L^2}\right)\|\Lambda^{s+1}u\|_{L^2} \\ \notag
					&\lesssim  (1+t)^{-1-\delta}\left(\|\Lambda^{s}\rho^{high}\|^2_{L^2}+\|\Lambda^{s}\rho^{low}\|^2_{L^2}\right)\\ \notag
					&~~~~+\varepsilon\left(\|\Lambda^{s+1}u\|^2_{L^2} + \|\Lambda^sg\|^2_{L^2(\mathcal{L}^2)}\right) ,
				\end{align}
				and
				\begin{align}\label{9ineq11}
					\langle\Lambda^{s}(u\cdot\nabla u),\Lambda^{s}u\rangle &\leq \|\Lambda^{s-1}(u\cdot\nabla u)\|_{L^2}\|\Lambda^{s+1}u\|_{L^2}\\ \notag
					& \lesssim \left(\|\Lambda^{s-1}u\|_{L^{4}}\|\nabla u\|_{L^{4}} + \|u\|_{L^{4}}\|\Lambda^{s}u\|_{L^4} \right)\|\Lambda^{s+1}u\|_{L^2} \\ \notag
					& \lesssim \varepsilon\|\Lambda^{s+1}u\|^2_{L^2} + (1+t)^{-1-\delta}\|\Lambda^{s}u^{low}\|^2_{L^2}.
				\end{align}
				Hence, together with estimates from \eqref{9ineq8} to \eqref{9ineq11}, one can arrive at
				\begin{align}\label{9ineq12}
					\frac{1}{2}\frac{d}{dt}\|\Lambda^su\|^2_{L^2} &+\mu\|\nabla\Lambda^s u\|^2_{L^2}+(\mu+\mu')\|{\rm div}~\Lambda^su\|^2_{L^2} 
					\\ \notag
					&~~~~+ \gamma\langle\Lambda^s\nabla\rho,\Lambda^su\rangle - \langle\Lambda^s{\rm div}~\tau,\Lambda^su\rangle \\ \notag
					&\lesssim  \varepsilon\left(\|\Lambda^{s+1}u\|^2_{L^2} + \|\Lambda^sg\|^2_{L^2(\mathcal{L}^2)}\right) + (1+t)^{-1-\delta}\|\Lambda^{s}(\rho,u)^{low}\|^2_{L^2}.
				\end{align}
				Applying $\Lambda^s$ to $(\ref{eq1})_3$ and taking $L^2(\mathcal{L}^{2})$ inner product with $\Lambda^{s}g$, we obtain
				\begin{align}\label{9ineq13}
					\frac{d}{dt}\|\Lambda^sg\|^2_{L^2} + \|\nabla_qg\|^2_{L^2(\mathcal{L}^2)} +\langle\Lambda^su,\Lambda^{s}{\rm div}~\tau\rangle 
					= \langle\Lambda^sH,\Lambda^{s}g\rangle,
				\end{align}
				with
				\begin{align}\label{9ineq14}
					\langle\Lambda^{s}H,\Lambda^{s}g\rangle &= -\langle\Lambda^{s}\left(u\cdot\nabla g\right),\Lambda^{s}g\rangle+\langle\Lambda^{s}\left(\nabla u qg\right),\nabla_q\Lambda^{s}g\rangle.
				\end{align}
				According to Lemmas \ref{Lemma4}, \ref{2lem2} and Proposition \ref{prop4} , we infer that
				\begin{align}\label{9ineq15}
					\langle \Lambda^s (u\cdot\nabla g),\Lambda^s g\rangle
					&= \langle u\cdot\nabla\Lambda^{s}g,\Lambda^{s}g\rangle + \langle [\Lambda^{s},u\cdot\nabla]g,\Lambda^{s}g\rangle \\ \notag
					&\lesssim \|\Lambda^{s}u\|_{L^{4}}\|\nabla g\|_{L^4(\mathcal{L}^{2})}\|\Lambda^{s}g\|_{L^2(\mathcal{L}^{2})} + \|\nabla u\|_{L^{\infty}}\|\Lambda^{s}g\|^2_{L^2(\mathcal{L}^{2})} \\ \notag
					& \lesssim \varepsilon\left( \|\Lambda^{s+1}u\|^2_{L^{2}} + \|\nabla_q\Lambda^{s}g\|^2_{L^2(\mathcal{L}^{2})}\right)+ (1+t)^{-1-\delta}\|\Lambda^{s}u^{low}\|^2_{L^2}.
				\end{align}
				Similarly, we obtain from Theorem \ref{th1} that
				\begin{align}\label{9ineq16}
					\langle\Lambda^{s}\left(\nabla u qg\right),\nabla_q\Lambda^{s}g\rangle
					&\lesssim\|\langle q \rangle g\|_{H^{s-1}(\mathcal{L}^{2})}\|\Lambda^{s+1} u\|_{L^2}\|\nabla_q\Lambda^s g\|_{L^2(\mathcal{L}^{2})}\\ \notag
					&~~~~+\|\nabla u\|_{L^{\infty}}\|q \Lambda^sg\|_{L^2(\mathcal{L}^{2})}\|\nabla_q\Lambda^s g\|_{L^2(\mathcal{L}^{2})}\\ \notag
					& \lesssim \varepsilon\left( \|\Lambda^{s+1}u\|^2_{L^{2}} + \|\nabla_q\Lambda^{s}g\|^2_{L^2(\mathcal{L}^{2})}\right).
				\end{align}
				Therefore, we deduce from \eqref{9ineq13} to \eqref{9ineq16} that
				\begin{align}\label{9ineq17}
					\frac {1} {2}&\frac {d} {dt} \|\Lambda^s g\|^2_{L^2(\mathcal{L}^{2})}+\|\nabla_q  \Lambda^s g\|^2_{L^2(\mathcal{L}^{2})}+\int_{\mathbb{R}^{d}}\nabla\Lambda^{s}u:\Lambda^{s}\tau dx \\ \notag
					&\lesssim \varepsilon( \|\Lambda^{s+1}u\|^2_{L^{2}} + \|\nabla_q\Lambda^{s}g\|^2_{L^2(\mathcal{L}^{2})})+ (1+t)^{-1-\delta}\|\Lambda^{s}u^{low}\|^2_{L^2}.
				\end{align}
				Together with estimates \eqref{9ineq5}, \eqref{9ineq12} and \eqref{9ineq17}, we conclude that
				\begin{align}\label{9ineq18}
					&\frac{d}{dt}\left(\gamma\|\Lambda^s\rho\|^2_{L^2}+\|\Lambda^s u\|^2_{L^2}+\|\Lambda^{s}g\|^2_{L^2(\mathcal{L}^2)}\right) \\ \notag
					&~~~~+\mu\|\nabla\Lambda^s u\|^2_{L^2}+(\mu+\mu')\|{\rm div}~\Lambda^su\|^2_{L^2} + \|\nabla_q\Lambda^sg\|^2_{L^2(\mathcal{L}^2)} \\ \notag
					&\lesssim (1+t)^{-1-\delta}\left(\|\Lambda^{s}\rho^{high}\|^2_{L^2} + \|\Lambda^{s}(\rho,u)^{low}\|^2_{L^2}\right).
				\end{align}
				\textbf{Dissipatioin of $\rho$ in high frequency :}\\
				Applying $\Lambda^{s}\dot{\Delta}_j$ to $(\ref{eq1})_1$ and taking $L^2$ inner product with $\Lambda^{s-1}\dot{\Delta}_ju$, we have
				\begin{align}\label{9ineq19}
					\partial_t \langle\Lambda^{s}\dot{\Delta}_j\rho,\Lambda^{s-1}\dot{\Delta}_ju\rangle - \|\Lambda^{s}\dot{\Delta}_ju\|^2_{L^2} = \langle \Lambda^{s}\dot{\Delta}_jF,\Lambda^{s-1}\dot{\Delta}_ju\rangle.
				\end{align}
				We infer from Lemma \ref{2lem2} and proposition \ref{prop4} that
				\begin{align}\label{9ineq20}
					\langle \Lambda^{s}\dot{\Delta}_jF,\Lambda^{s-1}\dot{\Delta}_ju\rangle &=  - \langle \Lambda^{s}\dot{\Delta}_j(\rho u),\Lambda^{s}\dot{\Delta}_ju\rangle \\ \notag
					& \leq \|\Lambda^{s-1}\dot{\Delta}_j(\rho u)\|_{L^2}\|\Lambda^{s+1}\dot{\Delta}_ju\|_{L^2} \\ \notag
					&\lesssim  d_j\|\Lambda^{s+1}u\|^2_{L^2} + d_j \left( \|\Lambda^{s-1}\rho\|^2_{L^2}\|u\|^2_{L^{\infty}} + \|\Lambda^{s-1}u\|^2_{L^2}\|\rho\|^2_{L^{\infty}}\right) \\ \notag
					&\lesssim  d_j\|\Lambda^{s+1}u\|^2_{L^2} + d_j  \|\rho\|^{\frac{2}{s}}_{L^2}\|u\|^{2-\frac{2}{s}}_{L^{2}}\|\Lambda^s\rho\|^{2-\frac{2}{s}}_{L^{\infty}}\|\Lambda^{s}u\|^{\frac{2}{s}}_{L^2} \\ \notag
					&~~~~+d_j  \|\rho\|^{2-\frac{2}{s}}_{L^2}\|u\|^{\frac{2}{s}}_{L^{2}}\|\Lambda^s\rho\|^{\frac{2}{s}}_{L^{\infty}}\|\Lambda^{s}u\|^{2-\frac{2}{s}}_{L^2} \\ \notag
					&\lesssim d_j\|\Lambda^{s+1}u\|^2_{L^2} + d_j(1+t)^{-1}  \left(\|\Lambda^{s}\rho^{high}\|^2_{L^2} + \|\Lambda^{s}(\rho,u)^{low}\|^2_{L^2}\right)
				\end{align}
				for some $\{d_j\}_{j\in Z}\in l^1$. Therefore
				\begin{align}\label{9ineq21}
					\langle\partial_t \Lambda^{s}\dot{\Delta}_j\rho,\Lambda^{s-1}\dot{\Delta}_ju\rangle &- \|\Lambda^{s}\dot{\Delta}_ju\|^2_{L^2} \lesssim d_j\|\Lambda^{s+1}u\|^2_{L^2} \\ \notag
					&+ d_j(1+t)^{-1}\left(\|\Lambda^{s}\rho^{high}\|^2_{L^2} + \|\Lambda^{s}(\rho,u)^{low}\|^2_{L^2}\right).~~~~~
				\end{align}
			Similarly, applying $\Lambda^{s-1}\dot{\Delta}_j$ to $\eqref{eq1}_2$ and taking $L^2$ inner product with $\Lambda^{s}\dot{\Delta}_j\rho$ leads to
				\begin{align}\label{9ineq22}
					\langle \Lambda^{s}\dot{\Delta}_j\rho,\partial_t\Lambda^{s-1}\dot{\Delta}_ju\rangle &+ \gamma\|\Lambda^s\dot{\Delta}_j\rho\|^2_{L^2} \lesssim d_j \left( \|\Lambda^{s+1}u\|^2_{L^2} + \|\Lambda^{s}g\|^2_{L^2(\mathcal{L}^2)} \right) \\ \notag
					&~~~~+ d_j(1+t)^{-\frac{1}{2}}\left(\|\Lambda^{s}\rho^{high}\|^2_{L^2}+\|\Lambda^{s}(\rho,u)^{low}\|^2_{L^2}\right).
				\end{align}
				Adding up \eqref{9ineq21} and \eqref{9ineq22} , we conclude that
				\begin{align}\label{9ineq23}
					\frac{d}{dt}\langle \Lambda^{s}\dot{\Delta}_j\rho,\Lambda^{s-1}\dot{\Delta}_ju\rangle &+ \gamma\|\Lambda^s\dot{\Delta}_j\rho\|^2_{L^2} - \|\Lambda^{s}\dot{\Delta}_ju\|^2_{L^2} \\ \notag
					&\lesssim d_j\left( \|\Lambda^{s+1}u\|^2_{L^2} + \|\Lambda^{s}g\|^2_{L^2(\mathcal{L}^2)}\right) \\ \notag
					&~~~~+ d_j(1+t)^{-\frac{1}{2}}\left(\|\Lambda^{s}\rho^{high}\|^2_{L^2}+\|\Lambda^{s}(\rho,u)^{low}\|^2_{L^2}\right),
				\end{align}
				which implies
				\begin{align}\label{9ineq24}
					\frac{d}{dt}&\left(\frac{C_2\eta}{(1+t)\ln^2(e+t)}\langle \Lambda^{s}\dot{\Delta}_j\rho,\Lambda^{s-1}\dot{\Delta}_ju\rangle\right) + \frac{C_2\gamma\eta}{(1+t)\ln^2(e+t)}\|\Lambda^s\dot{\Delta}_j\rho\|^2_{L^2}  \\ \notag
					&\lesssim d_j\eta\left( \|\Lambda^{s+1}u\|^2_{L^2} + \|\Lambda^{s}\tau\|^2_{L^2}\right) + d_j\eta(1+t)^{-\frac{3}{2}}\left(\|\Lambda^{s}\rho^{high}\|^2_{L^2}+\|\Lambda^{s}(\rho,u)^{low}\|^2_{L^2}\right) \\ \notag
					&~~~~+ \frac{\eta}{(1+t)^{2}\ln^2(e+t)}\langle \Lambda^{s}\dot{\Delta}_j\rho,\Lambda^{s-1}\dot{\Delta}_ju\rangle +\frac{\eta}{(1+t)\ln^2(e+t)}\|\Lambda^{s}\dot{\Delta}_ju\|^2_{L^2}.
				\end{align}
				Adding $j\in \sigma_R$ up leads to
				\begin{align}\label{9ineq25}
					\frac{d}{dt}&\left(\frac{C_2\eta}{(1+t)ln^2(e+t)}\mathop{\Sigma}\limits_{j\in\sigma_R}\langle \Lambda^{s}\dot{\Delta}_j\rho,\Lambda^{s-1}\dot{\Delta}_ju\rangle\right) + \frac{C_2\gamma\eta}{(1+t)ln^2(e+t)}\int_{S^c(R)}|\xi|^{2s}|\widehat{\rho}|^2d\xi \\ \notag
					&\lesssim \eta\left( \|\Lambda^{s+1}u\|^2_{L^2} + \|\Lambda^{s}\tau\|^2_{L^2}\right)
					+ \eta(1+t)^{-\frac{3}{2}}\left(\|\Lambda^{s}\rho^{high}\|^2_{L^2}+\|\Lambda^{s}(\rho,u)^{low}\|^2_{L^2}\right) \\ \notag
					&~~~~+\frac{\eta}{(1+t)^{2}\ln^2(e+t)}\mathop{\Sigma}\limits_{j\in\sigma_R}\langle \Lambda^{s}\dot{\Delta}_j\rho,\Lambda^{s-1}\dot{\Delta}_ju\rangle +\frac{\eta}{(1+t)\ln^2(e+t)}\mathop{\Sigma}\limits_{j\in\sigma_R}\|\Lambda^{s}\dot{\Delta}_ju\|^2_{L^2}.
				\end{align}
				Combining with estimates \ref{9ineq18} and \ref{9ineq25}, we thus complete the proof of Lemma \ref{2lem2}. 
			\end{proof}
				\textbf{The proof of Theorem \ref{th2}:}\\
					According to Schonbek's strategy and Lemma \ref{2lem2}, one can arrive at
				\begin{align}\label{10ineq0}
					\frac{d}{dt'}&\left(\|\Lambda^s (\sqrt{\gamma}\rho,u)\|^2_{L^2} + \|\Lambda^s g\|^2_{L^2(\mathcal{L}^2)}+\frac{C_2\eta}{(1+t')\ln^2(e+t')}\mathop{\Sigma}\limits_{j\in\sigma_R}\langle\Lambda^{s}\dot{\Delta}_j\rho,\Lambda^{s-1} \dot{\Delta}_ju\rangle \right)\\ \notag
					&~~~~+ \frac{C_2}{1+t'}\int|\xi|^{2s}|\widehat{u}|^2d\xi + \frac{C_2\eta\gamma}{(1+t')\ln^2(e+t')}\int|\xi|^{2s}|\widehat{\rho}|^2d\xi + \|\Lambda^{s} g\|^2_{L^2(\mathcal{L}^2)} \\ \notag
					&\lesssim \frac{C_2}{1+t'}\int_{S(t')}|\xi|^{2s}|\widehat{u}|^2d\xi + \frac{\eta}{(1+t')\ln^2(e+t')}\int_{S(R)} |\xi|^{2s}\left(|\widehat{\rho}|^2 + |\widehat{u}|^2\right)d\xi \\ \notag
					&~~~~+(1+t')^{-1-\delta}\int_{S^c(t')}|\xi|^{2s}|\widehat{\rho}|^2d\xi+\frac{\eta}{(1+t')^2\ln^2(e+t')}\mathop{\Sigma}\limits_{j\in\sigma_R}\langle\Lambda^{s}\dot{\Delta}_j\rho,\Lambda^{s-1} \dot{\Delta}_ju\rangle.
				\end{align}
				Set a positive constant $T_2$ sufficiently large. By virtue of Proposition \ref{prop4}, we deduce that
				\begin{align}\label{10ineq1}
					\frac{d}{dt'}&\left(\|\Lambda^s (\sqrt{\gamma}\rho,u)\|^2_{L^2} + \|\Lambda^s g\|^2_{L^2(\mathcal{L}^2)}+\frac{C_2\eta}{(1+t')\ln^2(e+t')}\mathop{\Sigma}\limits_{j\in\sigma_R}\langle\Lambda^{s}\dot{\Delta}_j\rho,\Lambda^{s-1} \dot{\Delta}_ju\rangle \right)\\ \notag
					&~~~~+ \frac{C_2}{1+t'}\int|\xi|^{2s}|\widehat{u}|^2d\xi + \frac{C_2\eta\gamma}{(1+t')\ln^2(e+t')}\int|\xi|^{2s}|\widehat{\rho}|^2d\xi + \|\Lambda^{s} g\|^2_{L^2(\mathcal{L}^2)} \\ \notag
					&\lesssim \frac{C_2}{(1+t')^{s+2}} + \frac{\eta}{(1+t')^{2}\ln^2(e+t')(1+R)^{s}}+\frac{\eta(1+R)}{(1+t')^2\ln^2(e+t')}\|\Lambda^{s}u\|^2_{L^2}.~~~~~~~
				\end{align}
				for any $t'\in [T_2,\infty)$. Multiplying \eqref{10ineq1} by $(1+t')^{s+3}$ and integrating $t'$ over $[T_d,t]$ leads to
				\begin{align}\label{10ineq2}
					&(1+t)^{s+\frac{d}{2}+2}\left(\|\Lambda^s (\sqrt{\gamma}\rho,u)\|^2_{L^2} + \|\Lambda^sg\|^2_{L^2(\mathcal{L}^2)}+\frac{C\eta}{(1+t)\ln^2(e+t)}\mathop{\Sigma}\limits_{j\in\sigma_R}\langle\Lambda^{s}\dot{\Delta}_j\rho,\Lambda^{s-1} \dot{\Delta}_ju\rangle \right)\\ \notag
					&~~~~\lesssim C_0 + C_d(1+t)^2 +\frac{C\eta(1+t)^{s+2}}{(1+R)^{s}} + \int_0^t \frac{\eta(1+R)}{\ln^2(e+t')}(1+t')^{s+1}\|\Lambda^{s}u\|^2_{L^2} dt',
				\end{align}
				where $C_0 = C_{\gamma}\left(\|(\rho_0,u_0)\|^2_{H^s} + \|g_0\|^2_{H^s(\mathcal{L}^2)}+\|\langle q\rangle g_0\|^2_{H^{s-1}(\mathcal{L}^2)}\right)$. Since
				$$\frac{C_2}{1+t}\|\Lambda^{s-1}u^{high}\|^2_{L^2} \leq \|\Lambda^{s}u^{high}\|^2_{L^2},$$
				it follows that
				\begin{align}\label{10ineq3}
					\|\Lambda^s (\sqrt{\gamma}\rho,u)\|^2_{L^2} + \|\Lambda^s g\|^2_{L^2(\mathcal{L}^2)}&+\frac{C_2\eta}{(1+t)ln^2(e+t)}\mathop{\Sigma}\limits_{j\in\sigma_t}\langle\Lambda^{s}\dot{\Delta}_j\rho,\Lambda^{s-1} \dot{\Delta}_ju\rangle \\ \notag
					&\geq \frac{1}{2}\left(\|\Lambda^s (\sqrt{\gamma}\rho,u)\|^2_{L^2} + \|\Lambda^s g\|^2_{L^2(\mathcal{L}^2)}\right).
				\end{align}
				for some $\eta$ small enough. Therefore, taking $R=t$, we obtian
				\begin{align}\label{10ineq4}
					(1+t)^{s+3}&\left(\|\Lambda^s (\sqrt{\gamma}\rho,u)\|^2_{L^2} + \|\Lambda^s g\|^2_{L^2(\mathcal{L}^2)}\right) \lesssim C_0 + (1+t)^2 \\ \notag
					&+ (1+t)\int_0^t\frac{C\eta}{\ln^2(e+t')}(1+t')^{s+1}\|\Lambda^{s}u\|^2_{L^2}dt'.
				\end{align}
				Denote ${\rm M}(t)=\mathop{\sup}\limits_{t'\in[0,t]}(1+t')^{s+1}\left(\|\Lambda^s (\sqrt{\gamma}\rho,u)\|^2_{L^2} + \|\Lambda^s g\|^2_{L^2(\mathcal{L}^2)}\right)$, we deduce that
				\begin{align}\label{10ineq5}
					{\rm M}(t) \lesssim C_0 + 1 + \int_0^t\frac{{\rm M}(t')}{(1+t')\ln^2(e+t')}dt',
				\end{align}
				which implies
				\begin{align}\label{10ineq6}
					\|\Lambda^s (\rho,u)\|^2_{L^2} + \|\Lambda^s g\|^2_{L^2(\mathcal{L}^2)} \lesssim (1+t)^{-s-1}.~~~
				\end{align}
				Furthermore, taking $L^2(\mathcal{L}^2)$ innner product with $g$ to $\eqref{eq1}_3$, we have
				\begin{align}\label{10ineq7}
					\frac {d} {dt} \|g\|^2_{L^2(\mathcal{L}^{2})}+\|g\|^2_{L^2(\mathcal{L}^{2})} \lesssim \|\nabla u\|^2_{L^2},~~~
				\end{align}
				then we infer by using Duhamel's principle that
				\begin{align}\label{10ineq8}
					\|g\|^2_{L^2(\mathcal{L}^{2})} &\lesssim e^{-t}\|g_0\|^2_{L^2(\mathcal{L}^{2})} + \int_0^t e^{-(t-t')}\|\nabla u\|^2_{L^2}dt'\\ \notag
					&\lesssim e^{-t}\|g_0\|^2_{L^2(\mathcal{L}^{2})} + \int_0^t e^{-(t-t')}(1+t')^{-2}dt'\\ \notag
					&\lesssim (1+t)^{-2}.
				\end{align}
				Applying $\Lambda^\sigma$ to $(\ref{eq1})_3$ and taking $L^2(\mathcal{L}^{2})$ inner product with $\Lambda^{\sigma}g$ leads to
				\begin{align}\label{10ineq9}
					&\frac {1} {2}\frac {d} {dt} \|\Lambda^\sigma g\|^2_{L^2(\mathcal{L}^{2})}+\|\nabla_q  \Lambda^\sigma g\|^2_{L^2(\mathcal{L}^{2})}+\int_{\mathbb{R}^{d}}\nabla\Lambda^{\sigma}u:\Lambda^{\sigma}\tau dx
					=-\langle \Lambda^\sigma (u\cdot\nabla g),\Lambda^\sigma g\rangle_M  \\ \notag
					&-\langle\frac 1 {\psi_\infty} \nabla_q\cdot(\Lambda^\sigma\nabla uqg\psi_\infty),\Lambda^\sigma g\rangle_M
					-\langle\frac 1 {\psi_\infty} \nabla_q\cdot(q\psi_\infty[\Lambda^\sigma,g]\nabla u),\Lambda^\sigma g\rangle_M.
				\end{align}
				According to Lemmas \ref{Lemma3}, \ref{Lemma4}, Proposition \ref{prop4} and \eqref{10ineq6}, we deduce that
				\begin{align}\label{10ineq10}
					\|\Lambda^\sigma g\|^2_{L^2(\mathcal{L}^{2})}
					&\lesssim e^{-t}\|g_0\|^2_{\dot{H}^\sigma(\mathcal{L}^{2})}
					+ \int_0^t e^{-(t-t')}\|\Lambda^\sigma u\|^2_{L^2}\|\nabla g\|^2_{H^{s-1}(\mathcal{L}^{2})}dt' \\ \notag
					&+ \int_0^t e^{-(t-t')}(\|\langle q\rangle g\|^2_{H^{s-1}(\mathcal{L}^{2})}+1)\|\Lambda^{\sigma+1}u\|^2_{L^2}dt'\\ \notag
					& \lesssim  e^{-t}\|g_0\|^2_{\dot{H}^\sigma(\mathcal{L}^{2})} + \int_0^t e^{-(t-t')}((1+t')^{ - \sigma - 2} + (1+t')^{- \sigma - 3})  dt'\\ \notag
					& \lesssim e^{-t}\|g_0\|^2_{\dot{H}^\sigma(\mathcal{L}^{2})} + (1+t)^{ - \sigma - 2},
				\end{align}
				which implies 
				$$
				\|g\|_{\dot{H}^\sigma(\mathcal{L}^{2})} \lesssim (1+t)^{ - \frac{\sigma}{2}-1},
				$$
				for any $\sigma\in[0,s-1]$. We thus complete the proof of Theorem \ref{th2} by the interpolation.
				\hfill$\Box$\\
				\smallskip
				\noindent\textbf{Acknowledgments} This work was
				partially supported by the National key R\&D Program of China(2021YFA1002100), the National Natural Science Foundation of China (No.12171493), the Macao Science and Technology Development Fund (No. 098/2013/A3), and Guangdong Province of China Special Support Program (No. 8-2015),
				and the key project of the Natural Science Foundation of Guangdong province (No. 2016A030311004).
				
				
				\phantomsection
				\addcontentsline{toc}{section}{\refname}
				\bibliographystyle{abbrv} 
				\bibliography{hookeanref2d}

			\end{document}